\newcommand{\cP}{{\cal{P}}}
\newcommand{\lm}{p}
\newcommand{\pen}{c}
\newcommand{\z}{z}
\newcommand{\beq}{\begin{equation}}
\newcommand{\eeq}{\end{equation}}
\newcommand{\beqa}{\begin{eqnarray}}
\newcommand{\eeqa}{\end{eqnarray}}
\newcommand{\beqas}{\begin{eqnarray*}}
\newcommand{\eeqas}{\end{eqnarray*}}
\newcommand{\bi}{\begin{itemize}}
\newcommand{\ei}{\end{itemize}}
\newcommand{\vgap}{\vspace{.1in}}
\newcommand{\lam}{{\lambda}}
\newcommand{\inner}[2]{\langle #1,#2\rangle}
\newcommand{\Inner}[2]{\left \langle #1\,,#2 \right \rangle}
\newcommand{\cl}{\mathrm{cl}\,}
\newcommand{\argmin}{\mathrm{argmin}}
\newcommand{\inte}{\mathrm{int}\,}
\newcommand{\dist}{\mathrm{dist}\,}
\newcommand{\dom}{\mathrm{dom}\,}
\newcommand{\cball}[2]{\mbox{$\bar {\it B}$}(#1;#2)}
\newcommand{\bConv}[1]{\mbox{\rm C}\overline{\mbox{\rm onv}}\,(\Re^{#1})}
\newcommand{\TheTitle}{Iteration-complexity of an inexact proximal accelerated augmented Lagrangian method for solving linearly constrained smooth nonconvex composite optimization problems}
\title{{\TheTitle}}
\author{
    Jefferson G. Melo \thanks{Instituto de Matem\'atica e Estat\'istica, Universidade Federal de Goi\'as, Campus II- Caixa Postal 131, CEP 74001-970, Goi\^ania-GO, Brazil. (E-mail: {\tt jefferson@ufg.br}). The work of this author was partially supported by CNPq grant 312559/2019-4 and FAPEG/GO.}
      \and 
    Renato D.C. Monteiro \thanks{School of Industrial and Systems Engineering, Georgia Institute of Technology, Atlanta, GA, 30332-0205. (E-mails: {\tt monteiro@isye.gatech.edu} and  {\tt hairongwhr@gatech.edu}). The work of Renato D.C. Monteiro
    was partially supported by ONR Grant N00014-18-1-2077.}
    \and
    Hairong Wang \footnotemark[2]
}
\begin{document}

\maketitle

\begin{abstract}
This paper proposes and establishes the iteration-complexity of an inexact proximal accelerated augmented Lagrangian (IPAAL) method 
			for solving linearly constrained smooth  nonconvex composite optimization problems.
			Each IPAAL iteration consists of inexactly solving a proximal augmented Lagrangian subproblem  by an accelerated composite gradient (ACG) method 
			followed by a suitable Lagrange multiplier update. It is shown that IPAAL generates an approximate  stationary solution 
			in at most ${\cal O}(\log(1/\rho)/\rho^{3})$ ACG iterations,  where $\rho>0$ is the given tolerance. 
	        It is also shown that the previous complexity bound can be sharpened to ${\cal O}(\log(1/\rho)/\rho^{2.5})$ under
    	additional mildly stronger assumptions.	The above bounds are derived assuming  that
    	the initial point is neither feasible nor the domain of the composite term of the objective function is bounded. 
    	Some preliminary numerical results are presented to illustrate the performance of the IPAAL method.
\end{abstract}

\begin{keywords}
Inexact proximal augmented Lagrangian methods, linearly constrained smooth nonconvex composite programs,
		  accelerated first-order methods, iteration-complexity.
\end{keywords}

\begin{AMS}
  47J22, 49M27, 90C25, 90C26, 90C30, 90C60, 
			65K10.
\end{AMS}

\section{Introduction} \label{sec:int}
	
	This paper presents an inexact proximal accelerated augmented Lagrangian
	(IPAAL) method for solving
	the linearly constrained smooth  nonconvex composite optimization problem
	\begin{equation} \label{optl0}
	\phi^*:=\min \{ \phi(z) :=  f(z) + h(z) : A z  =b\},
	\end{equation}
	where  $A:\Re^n \mapsto \Re^l$ is a linear operator, $b\in\Re^l$, $h:\Re^n \to (-\infty,\infty]$ is a closed proper convex function, and $f$ is a real-valued differentiable (possibly nonconvex) function whose gradient is $L$--Lipschitz and which, for some $0<m\leq L$, satisfies
\begin{equation}\label{eq:PenaltyProb-introa}
 f(u) \geq f(z)+\left\langle \nabla f(z),u-z\right\rangle -\frac{m}{2}\|u-z\|^{2} 
\quad \forall \, z,u \in \dom\, h.
\end{equation}
For a given tolerance pair $(\hat \rho,\hat \eta) \in \Re^2_{++}$,
its goal is to find a triple
$(\hat z,\hat p,\hat v)$ satisfying
\begin{align}\label{eq:approx_stationary1}
		\hat v\in\nabla f(\hat z)+\partial h(\hat z)+A^*\hat \lm,\qquad\|\hat v\|\leq\hat{\rho}, \qquad 
		\|A\hat z-b\|\leq \hat \eta.
		\end{align}
More specifically, the $\theta$-IPAAL method 
is based on the
	$\theta$-augmented Lagrangian ($\theta$-AL) function ${\cal L}^\theta_c(z;p)$ defined as
	\begin{equation}\label{lagrangian2}
	{\cal L}^\theta_c(z;\lm):=f(z)+h(z)+(1-\theta)\left\langle\lm,Az-b\right\rangle+\frac{\pen}{2}\|Az-b\|^2,
	\end{equation}
	where $\theta \ge 0$ is a given parameter.
	Note that when $\theta=0$,
	${\cal L}^\theta_\pen(\cdot,\cdot)$
	reduces to the well known quadratic augmented
	Lagrangian function which has been thoroughly studied
	in the literature (see for example \cite{AybatAugLag,Ber1,LanMonteiroAugLag,ShiqiaMaAugLag16,MR0418919}). Moreover,
	when $\theta=1$, ${\cal L}^\theta_\pen(\cdot,\cdot)$ does not depend on $\lm$ and reduces to the quadratic penalty function frequently used in penalty methods for solving
	\eqref{optl0}.
	Roughly speaking, for a fixed
	stepsize $\lam>0$,
	the static version of $\theta$-IPAAL method repeatedly
	performs the following iteration:
	given $(z_{k-1},\lm_{k-1})  \in \dom h \times \Re^l$, it computes $(z_k,\lm_{k})$ as
	\begin{align}
	z_k &\approx \argmin_z \left \{ \lam {\cal L}^\theta_\pen(z,\lm_{k-1}) + \frac12 \| z-z_{k-1}\|^2 \right\} \quad \label{exactzk0} \\
	\lm_k &= (1-\theta) \lm_{k-1} + c (Az_k-b).\label{exactpk0}
	\end{align}
	where $z_k$ in \eqref{exactzk0} should be understood as
	a suitable approximate solution of the underlying
	$\theta$-prox-AL subproblem. To complete the above
	outline of the $\theta$-IPAAL method, we now describe
	how $z_k$ is computed without elaborating on its
	inexactness. It can be easily seen that
	\eqref{eq:PenaltyProb-introa} implies that the objective
	function of  \eqref{exactzk0} is
	strongly convex whenever $\lambda<1/m$.
	The $\theta$-IPAAL method then sets $\lam=\tau/m$ for
	some suitably chosen $\tau \in (0,1)$  and then
	approximately solves the
	corresponding subproblem \eqref{exactzk0} by a
	strongly convex version of an
	accelerated composite gradient (ACG) method (see for example \cite{beck2009fast,MontSvaiter_fista,nesterov1983method}) to obtain $z_k$. Also, it is shown that each pair $(z_k,\lm_k)$ obtained in the
	above manner can be refined to a
	triple $(\hat z,\hat v, \hat \lm )=(\hat z_k,\hat v_k, \hat \lm_k )$ satisfying
	the inclusion in \eqref{eq:approx_stationary1}
	and the static
	$\theta$-IPAAL method is then stopped whenever
	the first inequality in \eqref{eq:approx_stationary1} is also satisfied.
	Finally, the static $\theta$-IPAAL method
	is shown to satisfy
	the following properties:
	1) it stops in
	${\cal O}(\sqrt{c}\log(c)/\hat \rho^2)$ ACG iterations;
	2) for every $k \ge 1$, the refined iterate $\hat z_k$ satisfies
	$\|A\hat z_k-b\| = {\cal O}(1/\sqrt{c})$.
	
	Observe that property 2) guarantees that $\hat z_k$
	is a near feasible point, i.e., satisfies the second inequality
	in \eqref{eq:approx_stationary1}, only when $c$ is sufficiently large.
	Based on this remark,
	a dynamic version of the 
	$\theta$-IPAAL method for
	finding a triple $(\hat z,\hat v,\hat p)$
	satisfying \eqref{eq:approx_stationary1}
	is also considered. More specifically, it chooses an initial
	penalty parameter $c$ and it repeatedly:
	a) invokes the static $\theta$-IPAAL with the current $c$ to
	obtain a triple $(\hat z,\hat v,\hat p)$ satisfying the inclusion
	and the first
	inequality in \eqref{eq:approx_stationary1};
	and b) doubles $c$ whenever the second inequality in \eqref{eq:approx_stationary1}
	is violated,
	until a triple $(\hat z,\hat v,\hat p)$ satisfying \eqref{eq:approx_stationary1} is obtained.
	It is then shown that
	the ACG iteration-complexity of this
	dynamic variant is
	${\cal O}([1/(\hat \eta \hat \rho^2)]\log(1/\hat\eta))$.
	It is also shown that
	the previous complexity can be sharpened to
	 ${\cal O}([1/(\sqrt{\hat \eta} \hat \rho^2)]\log(1/\hat\eta))$ under the mildly stronger assumptions that:
	a) $\inte(\dom h) \cap \{ z : Az = b\}$ is nonempty;
	b) for some $\bar c \ge 0$,
	the quadratic penalty function
	${\cal L}^1_{\bar c}$ has bounded level set, and;
	c)	$h$ belongs to a special class of
	closed convex functions which contains all indicator
	functions
	of closed convex sets.
	Finally, it is worth emphasizing that all the results
	mentioned above are derived without assuming that the
	initial point $z_0 \in \dom h$ is feasible, i.e.,
	satisfies $Az_0=b$.

{\it Related works.}
We first discuss papers dealing with related algorithms for solving the convex version of \eqref{optl0} and other related monotone problems.
Iteration-complexity analysis of quadratic penalty methods for solving  \eqref{optl0} under the assumption that
$f$ is convex and  $h$  is a convex indicator function was first studied in \cite{LanRen2013PenMet} and further explored  in
\cite{Aybatpenalty,IterComplConicprog}. Iteration-complexity of first-order augmented Lagrangian methods for solving
the latter class  of linearly constrained convex programs was studied in  \cite{AybatAugLag,LanMonteiroAugLag,ShiqiaMaAugLag16,zhaosongAugLag18,Patrascu2017,YangyangAugLag17}.
Inexact proximal point methods using  accelerated gradient algorithms to solve their prox-subproblems were previously considered in \cite{GlanPDaccel2014,YHe2,YheMoneiroNash,OliverMonteiro,MonteiroSvaiterAcceleration}
 in the setting of convex-concave saddle point problems and monotone variational inequalities.

We now discuss papers dealing with related algorithms for solving \eqref{optl0}
when $f$ is nonconvex and $A=0$, i.e., the unconstrained version of \eqref{optl0}.
Paper \cite{nonconv_lan16}  proposed an accelerated gradient framework to solve an unconstrained problem with better iteration-complexity than the usual composite gradient method. Since then, many authors have proposed other accelerated frameworks for solving the unconstrained counterpart of \eqref{optl0} under different assumptions on the functions $f$ and $h$
(see, for example, \cite{Aaronetal2017,Paquette2017,Ghadimi2019,Li_Lin2015,CatalystNC}).
In particular, by exploiting the lower curvature $m$, \cite{Aaronetal2017,Paquette2017,CatalystNC} proposed some algorithms which improve the iteration-complexity bound of \cite{nonconv_lan16} in terms of the dependence on  the Lipschitz constant. Finally,  there has been a growing interest in the iteration-complexity  of methods for solving optimization problems using  second order information (see, for example,  \cite{Aaronetal2017,CartToint,MonteiroSvaiterNewton,NesterovSec_ord}).

There are only a few papers analyzing  iteration-complexity of quadratic penalty and/or augmented Lagrangian type methods for solving \eqref{optl0} in its general form, i.e., $A\neq 0$ and $f$  nonconvex. 
This paragraph discusses the quadratic penalty type methods
while the one below discusses the augmenetd Lagrangian type
methods. Quadratic penalty type methods were studied in 
\cite{WJRproxmet1,WJRComputQPAIPP,PPmetNonconvex2019}. More specifically, paper \cite{WJRproxmet1}
proposed a quadratic penalty  accelerated inexact proximal point (QP-AIPP) method
which can be viewed as an instance of the $\theta$-IPAAL method outlined above
with $\theta=1$, and hence with $\eqref{lagrangian2}$ being
the usual quadratic penalty function for \eqref{optl0}.
The QP-AIPP and the $\theta$-IPAAL methods share  similar ACG iteration-complexity bounds. More specifically, the ACG iteration-complexity of
the QP-AIPP method is 
$\mathcal{O}(1/(\hat \rho^2\hat \eta))$ which is similar
(up to a logarithm multiplicative term) to
the first bound derived  for the $\theta$-IPAAL method
mentioned above.
Paper \cite{WJRComputQPAIPP} proposes a more computationally efficient
variant of QP-AIPP, namely, R-QP-AIPP,
which adaptively chooses the prox-stepsize $\lambda$ in
a more aggressive manner and as a result generates possibly nonconvex
subproblems \eqref{exactzk0} which are tentatively solved by
a standard strongly convex version of an ACG variant.
More recently, an alternative
proximal quadratic penalty method for \eqref{optl0} whose subproblems are also
approximately solved by a strongly convex version of an ACG variant
is studied in \cite{PPmetNonconvex2019}.
An ${\cal O}(\log(1/\hat \eta)/(\hat \rho^{2}\sqrt{\hat \eta}))$ ACG-iteration-complexity
is established for the method
under the assumption that $\dom h$ is bounded and the Slater condition
mentioned above holds.
Finally, paper \cite{MinMax-RenWilliam} studies the complexity of
a quadratic penalty based method
for solving \eqref{optl0} under the assumption that
$f(\cdot)= \max \{\Phi(\cdot,y) : y \in Y\}$ where $Y$ is a compact convex set,
$-\Phi(x,\cdot)$ is proper lower semi-continuous convex for every
$x \in \dom h$, and $\Phi(\cdot,y)$ is nonconvex differentiable
on $\dom h$ and its gradient is uniformly Lipschitz continuous
on $\dom h$ for every $y \in Y$.

Paper \cite{ProxAugLag_Ming} studies the iteration-complexity of
a linearized version of the augmented Lagrangian method to solve \eqref{optl0}
but assumes the strong condition (among a few others) that $h=0$,
which most important problems arising in applications do not satisfy.
Paper \cite{HongPertAugLag} studies an unaccelerated augmented Lagrangian
inexact proximal method
for \eqref{optl0} based on the $\theta$-AL function \eqref{lagrangian2} with
$\theta$ chosen in $(0,1]$
and establishes an ${\cal O}(1/(\hat \eta^4+\hat\rho^4))$ iteration-complexity
where each iteration exactly solves a subproblem
of the form \eqref{exactzk0} except that the function
$f$ that appears \eqref{lagrangian2}
is replaced by its linearization at $z_{k-1}$ and the prox term
$\|z-z_{k-1}\|^2$ is replaced by $\|z-z_{k-1}\|^2_{B^*B}$ for some
matrix $B$ such that $B^*B+A^*A - I$ is positive semidefinite.
In contrast to the $\theta$-IPAAL method studied in this paper,
their method requires the restrictive condition
that its initial point $x_0$ be feasible, i.e.,
satisfy $Ax_0=b$ and $x_0 \in \dom h$.


We now discuss three other alternative works dealing with complexity
of first-order algorithms for solving \eqref{optl0}.
The first one \cite{SZhang-Pen-admm} 
presents a
penalty ADMM approach which
introduces an artificial variable $y$ in (1) and then penalizes $y$ to obtain
the penalized problem
\begin{equation} \label{eq:penpr}
\min \left \{ f(z) + h(z) + \frac{c}2 \|y\|^2 : Ax+ y =b \right \},
\end{equation}
which is then solved by a two-block  ADMM. Since \eqref{eq:penpr} satisfies the assumption that
its $y$-block objective function component has Lipschitz continuous gradient everywhere and its $y$-block coefficient matrix
is the identity, an iteration-complexity of the two-block ADMM for solving \eqref{eq:penpr},
and hence \eqref{optl0},  can be established. More specifically, it has been shown in Remark 4.3 of \cite{SZhang-Pen-admm}  that the overall number of composite gradient steps performed by the aforementioned two-block ADMM penalty scheme to obtain a triple $(\hat z,\hat v,\hat p)$
satisfying \eqref{eq:approx_stationary1} is bounded by ${\cal O}(\hat \rho^{-6})$
under the assumptions that $\hat \eta = \hat \rho$, the level sets of $f+h$ are bounded and the initial triple
$(z_0,y_0,p_0)$ is such that $(y_0, p_0)=(0, 0)$ and $z_0$ is feasible.

The second one \cite{HybridPenaltyAugLag19} studies a hybrid penalty based and AL based
method whose penalty iterations are the ones which guarantee
its convergence and whose  AL iterations are included with the purpose
of improving its computational efficiency. More specifically, 
the latter ones are performed:
1) at the initial stage of the method in order to
provide an initial value for the
constant multiplier used during the penalty phase;
2) at the final stage of the method in order
to provide a better final multiplier estimate.

 Finally, the third one
 \cite{Lan-ConstrainedStocasticProxMetNonconvex2019} studies
 a  primal-dual proximal point type method for computing approximate stationary solution to a  constrained smooth  nonconvex composite optimization problem and  establishes its iteration-complexity bounds under different sets
 of assumptions.

\vspace{5mm}
{\it Organization of the paper.} 
Subsection~\ref{sec:bas} provides some basic definitions and notation. Section~\ref{modifiedProxAugLag} contains two subsections. The first one presents our main problem of interest and the assumptions made on it. It also presents a  refinement   procedure used in the $\theta$-IPAAL method. The second subsection reviews an ACG variant which will be used to approximately solve $\theta$-prox-AL subproblems  of the $\theta$-IPAAL method. Section~\ref{sec-algorithms} contains two subsections. The first one states  the static $\theta$-IPAAL method as well as its iteration-complexity bounds. The second  subsection is devoted to the study of a dynamic variant of the  static $\theta$-IPAAL method. The  iteration-complexity analysis of this dynamic scheme is also presented in this subsection. 
Section~\ref{sec:Technical-Results}	 contains the proofs of two main results of this paper, namely, Theorems~\ref{mainprop1} and \ref{mainprop2}. It is divided into two subsections. The proof of Theorem \ref{mainprop1} is given in the first subsection while the one of Theorem \ref{mainprop2} is given in the second subsection.
Section~\ref{Sec: proofAuxlemma} contains  the proof of an auxiliary technical result.  Secion~\ref{sec:numerical} is devoted to  some preliminary numerical results. Section~\ref{sec:ConcludingRemarks} presents  some  concluding remarks. Finally, some basic auxiliary results are considered in Appendix.

	\subsection{Notation and basic definitions}
	\label{sec:bas}
	This subsection presents   notation and basic definitions used in this paper.

	Let $\Re^n$ denote the $n$-dimensional Euclidean space with inner product and associated norm denoted by $\inner{\cdot}{\cdot}$ and $\|\cdot\|$, respectively. We use $\Re^{l\times n}$ to denote the set
	of all $l\times n$ matrices.  
	The image space of a matrix $Q\in \Re^{l \times n}$ is defined as  ${\rm Im}(Q):=\{Qx: x \in \Re^n\}$
	and $\cP_Q$ denotes the Euclidean projection onto $\mbox{Im}\, (Q)$.    The smallest positive eigenvalue of $(Q^*Q)^{1/2}$ is denoted by $\sigma^+(Q)$.
	If $Q$ is a symmetric and positive semidefinite matrix, the seminorm induced by $Q$ on $\Re^n$, denoted by $\|\cdot\|_{Q}$, 
	is defined as $\|\cdot\|_{Q}:= \langle Q (\cdot), \cdot\rangle ^{1/2}$. The distance of a point $x$ to a closed convex set $X$ is denoted by $\dist_X(x)$. For $t>0$, define $\log_1^+(t):=\max\{\log t, 1\}$.
	
	The domain of a function $h :\Re^n\to (-\infty,\infty]$ is the set   $\dom h := \{x\in \Re^n : h(x) < +\infty\}$.
	Moreover, $h$ is said to be proper if  $h(x) < \infty$ for some $x \in \Re^n$. The set of closed proper convex functions defined in $\Re^n$ is denoted by $\bConv{n}$. The $\varepsilon$-subdifferential of a function $h\in \bConv{n}$ is defined by 
	\begin{equation}\label{def:epsSubdiff}
	\partial_\varepsilon h(z):=\{u\in \Re^n: h(z')\geq h(z)+\inner{u}{z'-z}-\varepsilon, \quad \forall z' \in \Re^n\}
	\end{equation}
	for every $z\in \Re^n$.	For a scalar $\alpha\in \Re$ and a function $h$, we define the  sublevel set
	\[L_h(\alpha):=\{z\in \Re^n:  h(z)\leq \alpha\}.
	\]
	If $\psi:\Re^n\to \Re$ is differentiable at $\bar z \in \Re^n$, then its affine   approximation $\ell_\psi(\cdot;\bar z)$ at $\bar z$ is defined as
	\begin{equation}\label{eq:defell}
	\ell_\psi(z;\bar z) :=  \psi(\bar z) + \inner{\nabla \psi(\bar z)}{z-\bar z} \quad \forall  z \in \Re^n.
	\end{equation}

	\section{Problem of interest and background materials}\label{modifiedProxAugLag}
	This section contains two subsections. The first one presents our main problem of interest and the assumptions made on it.
	It also presents a procedure to refine an approximate
	solution  $(z_k,v_k)$ of \eqref{exactzk0} to a pair $(\hat z_k,\hat v_k)$ which together with an approximate Lagrange multiplier $\hat\lm_k$ is such that the triple $(\hat z,\hat v,\hat\lm):=(\hat z_k,\hat v_k,\hat\lm_k)$ 
	satisfies the inclusion in \eqref{eq:approx_stationary1}.
	The second  subsection reviews an accelerated composite gradient variant which will be used
	to approximately solve the subproblems generated by the $\theta$-IPAAL method.

	\subsection{Problem of interest and refinement procedure }\label{subsec:assump-Motivation}
	This subsection formally states  our problem of interest as well as the main assumptions and  the concept of approximate stationary point to it. 
	It also contains a refinement procedure which will be used in the $\theta$-IPAAL method.
	
	The main problem of interest in this paper is \eqref{optl0}
	where  $f, h:  \Re^{n} \to (-\infty,\infty]$,
	${A} : \Re^n \to \Re^l$ and $b \in \Re^l$ satisfy the following assumptions:
	\begin{itemize}
		\item[{\bf(A1)}] $A$ is a nonzero linear operator and the feasible set $\mathcal{F}:=\{z\in \dom h:Az=b\} \ne \emptyset$;
		\item[{\bf(A2)}] $h$ is a proper convex
		lower semi-continuous function;
		\item[{\bf(A3)}]
		$f$ is nonconvex and differentiable on $\dom h$, and there exist $L\geq m > 0$ such that for every $z,z' \in \dom h$,
		\begin{align}\label{gradLips}
		\| \nabla f(\z') -  \nabla f(\z) \| \le L \|z'-z\|, \\
		f(z') -\ell_f(z';z) \ge - \frac{m}2 \|z'-z \|^2; \label{lowerCurvature-m}
		\end{align}
		\item[{\bf(A4)}] 
		there exists $\bar{\pen} \ge 0$  such that 	$\phi^*_{\bar c}:=\inf_{z\in \Re^n} \phi_{\bar c}(z)>-\infty$, where
		$\phi_c$ is defined as
		\begin{equation}\label{def:phic}
		\phi_c(\cdot) := \phi(\cdot) + \frac{c}2 \|A\cdot-b\|^2 \quad \forall c \ge 0.
		\end{equation} 
	\end{itemize}

	Some comments are in order. First, it can be easily shown that the condition in \eqref{gradLips} implies that 
	$L\|z'-z \|^2/2 \geq f(z') -\ell_f(z';z) \ge -  L\|z'-z \|^2/2$ for every $z,z' \in \dom h$, and hence that \eqref{lowerCurvature-m} holds with $m=L$. However our analysis covers the case in which \eqref{lowerCurvature-m}  holds with a scalar $m< L$ in order to obtain
	better iteration-complexity bounds. Second,  \eqref{lowerCurvature-m} implies that the function $f(\cdot)+ m \|\cdot\|^2/2$ is convex on $\dom h$. Moreover, since $f$ is nonconvex on $\dom h$, we have that the smallest $m$ satisfying \eqref{lowerCurvature-m} is positive. Third, {\bf (A3)} implies that $\dom h \subseteq \dom f$,
	and hence that $\dom h = \dom \phi$.
	Fourth, {\bf (A4)} is used to obtain our iteration-complexity bounds. It trivially holds if $\phi$ is bounded below, which is always the case when $\dom h$ is bounded and $f$ is lower semi-continuous on $\cl( \dom h)$. Fifth, it is easy to see that  \eqref{lowerCurvature-m} implies that ${\cal L}^\theta_\pen(\cdot,\lm)$  is strongly convex for every $\lam<1/m$. 
Finally, for the sake of future reference, we note that the definitions of ${\cal F}$  and $\phi^*_{\bar c}$
	immediately imply that
	\begin{equation}\label{R-lam>=0}
	\phi^*_{\bar c} \le \phi^*=\inf_{z\in {\cal F} }  \phi(z).
	\end{equation}

	It is well known that, under some mild conditions, if $\bar{z}$ is a local minimum of \eqref{optl0}, then there exists
	$\bar{\lm} \in \Re^l$ such that $ (\bar{z}, \bar{\lm})$ is a stationary point of \eqref{optl0}, i.e., 
	\begin{equation}\label{estacionarypoint}
	0\in  \nabla f(\bar{z})+\partial h(\bar{z})+A^*\bar{\lm},
	\quad A\bar{z}-b=0.
	\end{equation}
	The main complexity results of this paper are stated in terms of the following notion of approximate stationary point
	which is a natural relaxation of \eqref{estacionarypoint}.
	
	\begin{definition}\label{def:stationarypoint}
		Given a tolerance pair  $(\hat\rho,\hat\eta)\in \Re_{++}\times\Re_{++}  $, a triple $(\hat z, \hat v,\hat\lm)\in \Re^n\times\Re^n\times\Re^l$ is said to be a $(\hat\rho,\hat\eta)$-approximate stationary point of \eqref{optl0} if it satisfies \eqref{eq:approx_stationary1}.
	\end{definition}

	
	Subsection~\ref{subsec:dynamicIPAAL} formally describes the  $\theta$-IPAAL method for finding a
	$(\hat\rho,\hat\eta)$-approximate stationary point of \eqref{optl0}. Each one of its outer iteration generates
	a triple $(z_k,v_k,\lm_k)$ satisfying the criterion \eqref{GIPPinc_ineq} and then
    updates the multiplier according to \eqref{def:multiplier}. The method
    computes the latter triple by applying 
    the ACG variant of Subsection~\ref{sec-nesterov} to the subproblem started
    from $z_{k-1}$ and stops when  \eqref{GIPPinc_ineq} is satisfied.
    
    In what follows, we discuss how the quadruple
    $(z_k,v_k,\lm_k,\varepsilon_k)$ obtained above can be refined to a suitable triple
    $(\hat z_k,\hat v_k,\hat \lm_k)$ satisfying the inclusion
    $\hat v_k \in \nabla f(\hat z_k)+\partial h(\hat z_k)+A^*\hat\lm_k$,
    which will later be shown
    in Subsections~\ref{subsec-mainAlg} and  \ref{subsec:dynamicIPAAL}
    to possess the property that
    $\|\hat v_k\|$  
    converges to zero with a well-established convergence rate
    while the feasibility gap $\|A\hat z_k-b\|$ is shown
    to be either
    ${\cal O}(1/\sqrt{c})$ (see Theorem~\ref{mainprop1}) or
    ${\cal O}(1/c)$ (see Theorem~\ref{mainprop2}).
    Hence, if $c$ is chosen sufficiently large,
    it follows that $(\hat z,\hat v,\hat \lm)=(\hat z_k,\hat v_k,\hat \lm_k)$
    will eventually satisfy \eqref{eq:approx_stationary1}, or equivalently,
    be a $(\hat\rho,\hat\eta)$-approximate solution of \eqref{optl0}.
    
    First, we mention that $\theta$-IPAAL assumes that
    $\lam<1/m$. In view of \eqref{lowerCurvature-m}, this implies that
    \eqref{exactzk0} is a strongly convex subproblem,
    and hence has a unique optimal solution.
    Consider first the case
    in which $z_k$ is the exact solution of
    \eqref{exactzk0}. Indeed, in this case, it follows from
    \eqref{exactpk0} and the
    optimality condition of \eqref{exactzk0} that 
    $0 \in \nabla f(z_k)+\partial h(z_k) + A^*\lm_k+ (z_k-z_{k-1})/\lam$,
    and hence that $(\hat z_k, \hat v_k,\hat \lm_k):=(z_k,(z_{k-1}-z_k)/\lam,\lm_k)$
    satisfies the above inclusion.
    Assume now that $z_k$ is an approximate solution of \eqref{exactzk0}
    in the sense that there exists a residual pair
    $(v_k,\varepsilon_k)$ which together with $z_k$ satisfies
    the approximation criterion \eqref{GIPPinc_ineq} below.
    In this case, it is shown 
    in Proposition~\ref{prop:approxsol}
    below that the following refinement procedure
    with inputs  $(g,h)=({\cal L}^\theta_c(\cdot;\lm_{k-1})-h,h)$  and $(\lam,z^-,z,v)=(\lam,z_{k-1},z_k,v_k)$ obtains a pair    $(\hat z_k, \hat v_k)$ which together with
    $\hat\lm_k$ as in \eqref{def:wkpkhat} satisfies the above
    inclusion and such that
    $(\|\hat v_k\|, \|\hat z_k - z_k\|)$ is conveniently
    bounded by $\|v_k+z_{k-1}-z_k\|$ and $\varepsilon_k$.
    Since $\varepsilon_k$ is bounded by $\|v_k+z_{k-1}-z_k\|$
    in view of the inequality in \eqref{GIPPinc_ineq}, and
    it follows from the second inequality in \eqref{ineqs:Feas-rk-pk} combined with \eqref{def:Delta}  that
    $\|v_k+z_{k-1}-z_k\|$  approaches
    zero, we will then be able to conclude that
    $\|\hat v_k\|$, as well as
    $\|\hat z_k - z_k\|$, approaches zero.

	\vspace{2mm}
	\noindent\rule[0.5ex]{1\columnwidth}{1pt}
	
	\textbf{Refinement Procedure}
	
	\noindent\rule[0.5ex]{1\columnwidth}{1pt}
	
		\noindent {\bf Input:} A pair of functions $(g,h)$ such that  $h \in \bConv{n}$,
		and $g$ is differentiable on $\dom h$ and its gradient  is $M$-Lipschitz continuous, and a quadruple $(\lam,z^-,z,v) \in \Re_{++}\times\Re^n\times \dom h\times\Re^n$. 
	\begin{itemize}
		\item[(1)] set 
		\begin{equation}\label{eq:def_f}
		g_\lam:=\lambda g +\frac{1}{2} \|\cdot-z^-\|^2-\langle v, \cdot \rangle, \quad h_{\lam} := \lam h;
		\end{equation}
		\item[(2)]
		compute
		\begin{align}
		&\hat  z := \argmin_u \left\{ \inner{\nabla g_\lam(z)}{u-z} + \frac{\lambda M+1}2 \|u-z\|^2 + h_\lam(u) \right \}, \label{eq:z_pRefProc} \\
		& \hat v:= \frac{1}\lam \left[( v+ z^--z ) +(\lambda M+1)(z-\hat z) \right] +  \nabla g(\hat z)-\nabla g(z), \label{eq:ref_vp} \\
		& \Delta:= (g_\lam+h_\lam)(z) - (g_\lam+h_\lam)(\hat z); \label{eq:ref_var}
		\end{align}
	\end{itemize}
		\noindent {\bf Output:}  $(\hat z,\hat v, \Delta) \in \dom h \times \Re^n \times \Re_{++}$.
	
	\vspace{2mm}
\noindent\rule[0.5ex]{1\columnwidth}{1pt}

	We now state a proposition summarizing some  important properties of the above procedure.
	
	\begin{proposition}\label{prop:approxsol}
		Under the assumptions stated at the beginning of the above refinement procedure,
		the following statements about its output $(\hat z,\hat v,\Delta)$ hold:
		\begin{itemize}
			\item[a)] $\Delta \geq 0$ and 
			\begin{equation}\label{eq:inclv'}
			\hat v \in \nabla g(\hat z) + \partial h (\hat z),\quad \lam \|\hat v\|\leq \|v +z^--z\| +2\sqrt{2 (\lambda M+1) \Delta},\qquad \|\hat z-z\| \leq \sqrt{2(\lambda M+1)^{-1} \Delta};
			\end{equation}
			\item[b)] if there exists $\varepsilon\geq 0$ such that the input  $(\lambda,z^-,z,v)$  satisfies
			\begin{equation} \label{incl:proc}
			v \in \partial_\varepsilon \left(\lambda(g+ h) + \frac{1}{2}\|\cdot-z^-\|^2 \right)(z),  
			\end{equation}
			then $\Delta \leq \varepsilon.$
			
		\end{itemize}
	\end{proposition}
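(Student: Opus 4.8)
The plan is to exploit the optimality condition of the strongly convex surrogate subproblem \eqref{eq:z_pRefProc} together with the descent lemma applied to $g_\lambda$. First I would record two elementary facts following directly from \eqref{eq:def_f}: the gradient $\nabla g_\lambda(u) = \lambda\nabla g(u) + (u - z^-) - v$ is $(\lambda M + 1)$-Lipschitz continuous on $\dom h$, and the objective $\ell(\cdot) := \inner{\nabla g_\lambda(z)}{\cdot - z} + \frac{\lambda M + 1}{2}\|\cdot - z\|^2 + h_\lambda(\cdot)$ of \eqref{eq:z_pRefProc} is $(\lambda M + 1)$-strongly convex with (unique) minimizer $\hat z$. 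The inclusion $\hat v \in \nabla g(\hat z) + \partial h(\hat z)$ then follows by rewriting the optimality condition of \eqref{eq:z_pRefProc} as $(\lambda M + 1)(z - \hat z) - \nabla g_\lambda(z) \in \partial h_\lambda(\hat z) = \lambda\,\partial h(\hat z)$, dividing by $\lambda$, substituting $\nabla g_\lambda(z) = \lambda\nabla g(z) + (z - z^-) - v$, and checking that the resulting element of $\partial h(\hat z)$ coincides with $\hat v - \nabla g(\hat z)$ for $\hat v$ as in \eqref{eq:ref_vp}.

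To obtain $\Delta \ge 0$ together with the last two estimates in \eqref{eq:inclv'}, I would combine two inequalities. The first, $h_\lambda(z) - h_\lambda(\hat z) \ge \inner{\nabla g_\lambda(z)}{\hat z - z} + (\lambda M + 1)\|\hat z - z\|^2$, comes from the strong-convexity bound $\ell(z) \ge \ell(\hat z) + \frac{\lambda M + 1}{2}\|z - \hat z\|^2$ after noting $\ell(z) = h_\lambda(z)$. The second, $g_\lambda(z) - g_\lambda(\hat z) \ge -\inner{\nabla g_\lambda(z)}{\hat z - z} - \frac{\lambda M + 1}{2}\|\hat z - z\|^2$, is the descent lemma for $g_\lambda$ applied in the direction from $z$ to $\hat z$. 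Adding them and using \eqref{eq:ref_var} yields $\Delta \ge \frac{\lambda M + 1}{2}\|\hat z - z\|^2 \ge 0$, which rearranges to the third inequality in \eqref{eq:inclv'}. For the middle inequality I would use the identity $\lambda\hat v = (v + z^- - z) + (\lambda M + 1)(z - \hat z) + \lambda(\nabla g(\hat z) - \nabla g(z))$ from \eqref{eq:ref_vp}, the triangle inequality, the $M$-Lipschitz continuity of $\nabla g$, and the bound on $\|\hat z - z\|$ just derived, which together give $\lambda\|\hat v\| \le \|v + z^- - z\| + (2\lambda M + 1)\|\hat z - z\|$; the estimate $(2\lambda M + 1)\sqrt{2(\lambda M + 1)^{-1}\Delta} \le 2\sqrt{2(\lambda M + 1)\Delta}$, equivalent to the trivial inequality $2\lambda M + 1 \le 2(\lambda M + 1)$, then finishes part a).

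For part b) I would simply instantiate the defining inequality of \eqref{incl:proc} at $z' = \hat z$. Writing $\psi := \lambda(g + h) + \frac12\|\cdot - z^-\|^2$, which by \eqref{eq:def_f} equals $(g_\lambda + h_\lambda)(\cdot) + \inner{v}{\cdot}$, the inclusion $v \in \partial_\varepsilon\psi(z)$ gives $\psi(\hat z) \ge \psi(z) + \inner{v}{\hat z - z} - \varepsilon$; the linear terms $\inner{v}{\cdot}$ cancel, leaving $(g_\lambda + h_\lambda)(\hat z) \ge (g_\lambda + h_\lambda)(z) - \varepsilon$, i.e.\ $\Delta \le \varepsilon$ by \eqref{eq:ref_var}.

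The argument is essentially bookkeeping; the only point requiring a little care is matching the constant in the middle inequality of \eqref{eq:inclv'}, together with making sure the descent lemma for the (possibly nonconvex) function $g_\lambda$ is invoked in the orientation consistent with the surrogate $\ell$. I do not expect a genuine obstacle beyond keeping track of the Lipschitz constant $\lambda M + 1$ throughout.
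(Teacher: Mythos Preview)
Your argument is correct. The paper does not actually prove this proposition but defers to \cite[Proposition~2.1 and Lemma~2.3]{WJRComputQPAIPP}; your self-contained derivation---optimality condition of the strongly convex surrogate for the inclusion, strong-convexity lower bound on $\ell$ plus the descent lemma for $g_\lambda$ to get $\Delta \ge \tfrac{\lambda M+1}{2}\|\hat z - z\|^2$, and the $\varepsilon$-subdifferential inequality at $\hat z$ for part b)---is exactly the standard route behind the cited result and all constants are tracked correctly.
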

	\begin{proof}
	The proof of this proposition can be found, for instance, in \cite[Proposition~2.1 and Lemma~2.3]{WJRComputQPAIPP}. 
	\end{proof}
	The above proposition shows that: 1) the pair $(\hat z, \hat v)$, computed as in \eqref{eq:z_pRefProc} and \eqref{eq:ref_vp},  satisfies an inclusion closely related to  \eqref{eq:approx_stationary1}, in view of the definition $\hat \lm$ given in Step~3 of the $\theta$-IPAAL method of Subsection~\ref{subsec-mainAlg}; 2) the quantities
	$\lambda \|\hat v\|$ and $\|\hat z-z\|$ have upper  bounds  expressed in terms of  $\lambda M$ and the two quantities: $\|v+z^- -z\|$ and $\sqrt{\Delta}$. Finally, the ACG method of Subsection~\ref{sec-nesterov} will be invoked in Step~1 of the $\theta$-IPAAL method to compute a triple $(\varepsilon,z,v)$ which  together with a previously computed pair $(\lambda,z^-)$  satisfy the inclusion in \eqref{incl:proc} as well as some extra conditions bounding, in particular, the scalar $\varepsilon$ in terms of  the quantity $\|v+z^- -z\|$. In view of the latter discussion and  Proposition~\ref{prop:approxsol} b), the scalar $\Delta$ will also be controlled by $\|v+z^- -z\|$. The latter result will be essential to establish our iteration-complexity bounds.

	\subsection{An ACG variant}\label{sec-nesterov}
	This subsection reviews the ACG variant that will be
invoked by	the $\theta$-IPAAL method  for solving  the subproblems \eqref{exactzk0} which
	arise during its implementation. It also discusses the  iteration-complexity for finding a
	certain type of approximate solutions.
	
	With the above goal in mind, we
	 consider the composite optimization problem
	(for the purpose of this subsection only)
	\begin{equation}\label{mainprob:nesterov1}
	\min \{\psi(x):=\psi_s(x)+\psi_n(x) :  x \in \Re^n\}
	\end{equation}
	where the following conditions are assumed to hold:
	\begin{itemize}
		\item [{\bf (C1)}]$\psi_n:\Re^n\rightarrow (-\infty,+\infty]$ is a proper, closed and $\mu$-strongly convex  function with $\mu \ge 0$;
		\item [{\bf (C2)}]$\psi_s$ is a convex differentiable function on $\dom \psi_n$
		and there exists $M_s >0$ satisfying
		$
		\psi_s(u)-\ell_{\psi_s}(u; x)
		\leq  M_s\|u-x\|^2/2$ for every $ x, u \in \dom \psi_n$
		where $\ell_{\psi_s}(\cdot \,;\cdot)$ is defined in \eqref{eq:defell}.
	\end{itemize}
	
	We now state the aforementioned ACG variant 
	for solving \eqref{mainprob:nesterov1}.
	We remark that other ACG variants such as the ones
	in \cite{Attouch2016,YHe2,nesterov2012gradient,nesterov1983method,tseng2008accmet}
	could also have been used in the development of
	the $\theta$-IPAAL method.
	
	\vgap

	\noindent\rule[0.5ex]{1\columnwidth}{1pt}
	
	\textbf{ACG Method}
	
	\noindent\rule[0.5ex]{1\columnwidth}{1pt}
	\begin{itemize}
		\item[(0)]  Let function and parameter pairs $(\psi_s,\psi_n)$ and $(M_s,\mu)$ satisfying assumptions {\bf (C1)} and {\bf (C2)}
		and initial point $x_{0}\in \dom  \psi_n $ be given, and set $y_{0}=x_{0}$, $A_{0}=0$, $\Gamma_0\equiv0$ and $j=0$; 
		\item[(1)]  compute
		\begin{align*}
		A_{j+1}  &=A_{j}+\frac{\mu A_{j}+1+\sqrt{(\mu A_{j}+1)^2+4M_s(\mu A_{j}+1)A_{j}}}{2M_s},\\
		\tilde{x}_{j}  &=\frac{A_{j}}{A_{j+1}}x_{j}+\frac{A_{j+1}-A_{j}}{A_{j+1}}y_{j},\quad\Gamma_{j+1}=\frac{A_{j}}{A_{j+1}}\Gamma_j+\frac{A_{j+1}-A_{j}}{A_{j+1}}\ell_{\psi_s}(\cdot;\tilde x_j),\\
		y_{j+1} &=\argmin_{y}\left\{ \Gamma_{j+1}(y)+\psi_n(y)+\frac{1}{2A_{j+1}}\|y-y_{0}\|^{2}\right\},\\
	x_{j+1} & =\frac{A_{j}}{A_{j+1}}x_{j}+\frac{A_{j+1}-A_{j}}{A_{j+1}}y_{j+1};
		\end{align*}
		\item[(2)] compute 
		\begin{align*}
		u_{j+1}&=\frac{y_0-y_{j+1}}{A_{j+1}},\\[2mm]
		\eta_{j+1}&= \psi(x_{j+1})-\Gamma_{j+1}(y_{j+1})- \psi_n(y_{j+1})-\langle u_{j+1},x_{j+1}-y_{j+1}\rangle;   
		\end{align*}
		\item[(3)]  set $j\leftarrow j+1$ and go to (1). 
	\end{itemize}
	\noindent\rule[0.5ex]{1\columnwidth}{1pt}
	
	Some remarks about the ACG method follow. First, the main core and usually the common way of describing an iteration of the ACG method is as in step~1.
	Second, the
	extra sequences $\{u_j\}$ and $\{\eta_j\}$ computed  in step~2 will be used to develop a stopping criterion for the ACG method when it 	is called as a subroutine for solving the subproblems of the $\theta$-IPAAL method in Subsection~\ref{subsec-mainAlg}.
	Third, the ACG method in which  $\mu=0$  is a special case of a slightly more general one studied by Tseng in \cite{tseng2008accmet} (see Algorithm~3
	of \cite{tseng2008accmet}).
	The analysis of the general case of the ACG method in which $\mu\geq0$ was studied  in  \cite[Proposition~2.3]{YHe2}. 
	The sequence $\{A_k\}$ has the following increasing property
	\begin{equation}
	A_{j}\geq\frac{1}{M_s}\max\left\{\frac{j^{2}}{4},\left(1+\sqrt{\frac{\mu}{4M_s}}\right)^{2(j-1)}\right\}. \label{ineq:increasingA_k}
	\end{equation}

	The next proposition summarizes the main properties
	of the ACG method that will be needed in our analysis.
	
	\begin{proposition}\label{lem:nest_complex} 
		Let $\{(A_j,x_j, u_j,\eta_j)\}$ be the sequence generated by the ACG method applied to \eqref{mainprob:nesterov1},
		where $(\psi_s,\psi_n)$ is a given pair of  functions  satisfying {\bf (C1)} and {\bf (C2)} with $4 M_s \ge \mu> 0$.
		Then, the following statements hold: 
		\begin{itemize}
		    \item [a)] for every $j\geq 1$, we have 
		    $u_j\in  \partial_{\eta_j}(\psi_s+\psi_n)(x_j)$;
		   \item[b)] for any $\sigma>0$, the ACG method obtains  a triple $(x,u,\eta)=(x_j,u_j,\eta_j)$ satisfying  
		   \begin{equation}\label{mainprob:nesterov}
		u\in  \partial_{\eta}(\psi_s+\psi_n)(x) \quad \|u\|^{2}+2\eta\le\sigma^2\|x_{0}-x+u\|^{2}
		\end{equation}
		in at most 
		\begin{equation}\label{boundACG}
		\left\lceil1+\sqrt{\frac{M_s}{\mu}}\log^+_1\left( (1+\sigma^{-1})\sqrt{2M_s}\right)\right\rceil 
		\end{equation}
		iterations.
		\end{itemize}
	\end{proposition}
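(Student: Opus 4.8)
The plan is to treat the two statements separately, establishing (a) by the standard ``lower-model'' analysis of accelerated gradient methods and (b) by combining (a) with the classical ACG convergence rate together with the geometric growth \eqref{ineq:increasingA_k} of $\{A_j\}$.

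\emph{Statement (a).} Since $\psi_s$ is convex and differentiable on $\dom\psi_n$ by {\bf (C2)}, each affine function $\ell_{\psi_s}(\cdot;\tilde x_i)$ from \eqref{eq:defell} is a global minorant of $\psi_s$; by its recursive definition $\Gamma_{j+1}$ is a convex combination (with nonnegative weights, because $\{A_i\}$ is nondecreasing) of $\ell_{\psi_s}(\cdot;\tilde x_0),\dots,\ell_{\psi_s}(\cdot;\tilde x_j)$, hence $\Gamma_{j+1}$ is affine and $\Gamma_{j+1}+\psi_n\le\psi$ on $\Re^n$. Now $y_{j+1}$ minimizes $y\mapsto\Gamma_{j+1}(y)+\psi_n(y)+\tfrac{1}{2A_{j+1}}\|y-y_0\|^2$, which is $(\mu+1/A_{j+1})$-strongly convex ($\psi_n$ is $\mu$-strongly convex, $\Gamma_{j+1}$ affine); writing the strong-convexity inequality at an arbitrary $x\in\Re^n$, using $u_{j+1}=(y_0-y_{j+1})/A_{j+1}$ to rewrite the quadratic terms, discarding the nonnegative term $\tfrac{\mu}{2}\|x-y_{j+1}\|^2$, and then splitting $x-y_{j+1}=(x-x_{j+1})+(x_{j+1}-y_{j+1})$ and invoking the definition of $\eta_{j+1}$ in step~(2), one obtains $\psi(x)\ge\psi(x_{j+1})+\langle u_{j+1},x-x_{j+1}\rangle-\eta_{j+1}$ for all $x$. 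By \eqref{def:epsSubdiff} this is $u_{j+1}\in\partial_{\eta_{j+1}}(\psi_s+\psi_n)(x_{j+1})$, and taking $x=x_{j+1}$ yields $\eta_{j+1}\ge0$; since $j\ge0$ is arbitrary, (a) follows.

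\emph{Statement (b).} The first ingredient is the classical convergence analysis of this ACG variant --- Tseng's Algorithm~3 in \cite{tseng2008accmet} for $\mu=0$ and its extension to $\mu\ge0$ in \cite[Proposition~2.3]{YHe2} --- which, via a telescoping/potential argument controlling $A_j\psi(x_j)+\tfrac{1+\mu A_j}{2}\|\cdot-y_j\|^2$ and combined with part~(a), delivers a ``GIPP-compatible'' estimate of the form
\begin{equation*}
\|u_j\|^2+2\eta_j\ \le\ \frac{C\,M_s}{A_j}\,\|x_0-x_j+u_j\|^2\qquad\forall\,j\ge1
\end{equation*}
for an absolute constant $C$; its content is that the residual $x_0-x_j+u_j$ is bounded below by a $\sqrt{A_j}$-multiple of $\sqrt{\|u_j\|^2+2\eta_j}$, so the quotient of the two sides decays like $1/A_j$. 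Hence the stopping condition \eqref{mainprob:nesterov} is met by $(x_j,u_j,\eta_j)$ once $A_j$ exceeds a threshold of order $M_s(1+\sigma^{-1})^2$; taking square roots of that threshold produces the factor $(1+\sigma^{-1})\sqrt{2M_s}$ inside the logarithm of \eqref{boundACG}. To convert ``$A_j\ge$ threshold'' into a bound on $j$, note that $\mu>0$ so \eqref{ineq:increasingA_k} gives $A_j\ge M_s^{-1}(1+\sqrt{\mu/(4M_s)})^{2(j-1)}$; requiring the right-hand side to exceed the threshold, taking logarithms, and using $\log(1+t)\ge t/2$ for $t=\sqrt{\mu/(4M_s)}\in(0,1]$ (valid exactly because $\mu\le4M_s$), one solves for $j$ and recovers \eqref{boundACG}. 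The truncation $\log_1^+(\cdot)=\max\{\log(\cdot),1\}$, rather than a plain logarithm, is needed only to absorb the degenerate case in which the threshold is already attained at the first iteration.

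\emph{Main obstacle.} The substantive step is the GIPP-compatible estimate above: isolating the residual $x_0-x_j+u_j$ on the right-hand side (in place of, say, $\|x_0-x^*\|$) and checking that it cannot collapse relative to $\|u_j\|^2+2\eta_j$ requires feeding part~(a) into the potential-function recursion in a careful way, after which the constants must be tracked closely enough that the final count is exactly \eqref{boundACG}, including the $\sqrt{2M_s}$, the $(1+\sigma^{-1})$, and the $\log_1^+$. All the necessary pieces, however, are already present in \cite{tseng2008accmet,YHe2}, so most of the proof amounts to assembling and specializing those results.
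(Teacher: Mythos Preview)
Your approach is essentially the same as the paper's, which simply defers both parts to \cite{WJRproxmet1} (Proposition~8(c) for (a) and Lemma~9 for (b)) and then checks via \eqref{ineq:increasingA_k} that the iteration count in \eqref{boundACG} forces $A_j\ge 2(1+\sigma^{-1})^2$. One small slip in your sketch: the GIPP-compatible estimate from \cite{WJRproxmet1} has the form $\|u_j\|^2+2\eta_j\le (2/A_j)\|x_0-x_j+u_j\|^2$ with no $M_s$ in the numerator---the factor $\sqrt{2M_s}$ inside the logarithm arises only because the growth bound \eqref{ineq:increasingA_k} carries the prefactor $M_s^{-1}$, so the threshold on $A_j$ is $2(1+\sigma^{-1})^2$ rather than the $M_s(1+\sigma^{-1})^2$ you wrote (your version would produce $M_s$, not $\sqrt{2M_s}$, inside the log).
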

	\begin{proof} The inclusion in a) follows immediately from \cite[Proposition~8(c)]{WJRproxmet1}. The proof of b) follows from the first statement in \cite[Lemma~9]{WJRproxmet1} and by noting that if  $j$ is larger than or equal to the number in \eqref{boundACG}, then $A_j\geq 2(1+\sigma^{-1})^2$, in view of \eqref{ineq:increasingA_k}. It should be noted that the parameter $\sigma$ in \eqref{mainprob:nesterov} corresponds to 
	$\sqrt{\sigma}$ in \cite[Proposition~8 and Lemma~9]{WJRproxmet1}.\end{proof}
	
	As we have already mentioned earlier, the ACG method
	of this subsection will be used to approximately
	solve the subproblems \eqref{exactzk0} during the course
	of the $\theta$-IPAAL method.
	 The results of
	this subsection will then be invoked 
	with $\psi$ chosen as the objective
	function of \eqref{exactzk0} and condition
	\eqref{mainprob:nesterov} on $(x,u,\eta)$ will then be used 
	to declare $z_k=x$ as an acceptable
	approximate solution of \eqref{exactzk0}.

	\section{The $\theta$-IPAAL method and main complexity results}\label{sec-algorithms}
	This section contains two subsections. The first one states  the static $\theta$-IPAAL method as well as its iteration-complexity bounds.
	The second  subsection presents  a dynamic variant of the  
	static $\theta$-IPAAL method and its  iteration-complexity analysis.
	
	\subsection{The static $\theta$-IPAAL method and its iteration-complexity}\label{subsec-mainAlg}
	This subsection presents the static $\theta$-IPAAL method and its iteration-complexity bounds.
    
    The static $\theta$-IPAAL method assumes that the parameter
    $\theta \in (0,1]$ is specified by the user and then starts
    by computing the following two intermediate parameters $\tau_\theta$ and $\sigma_\theta$ uniquely
    determined by $\theta$:
    \begin{equation}\label{def:tau}
            \tau_\theta:=\left\{\begin{array}{cc} \frac{\theta}{16-17\theta},&\mbox{if $\theta\leq \frac{16}{19}$}, \\[3mm]
            \frac{1}{2}, & \mbox{otherwise},
            \end{array} \right.                   
            \end{equation}
            which is used to define the prox-stepsize $\lambda$ that appears in the
    prox $\theta$-AL subproblems \eqref{exactzk0}, and
    $\sigma_\theta$ defined as the
    the only positive
	root of the second-order equation
	\begin{equation}\label{def:sigmatheta}
	\left(\frac{3}{4} +\frac{2(1-\theta)\left(3\tau_\theta+1\right)}{\theta\tau_\theta}\right) \sigma^2 + \left( \frac{8-7\theta}{2\theta}\right)\sigma  -\frac{1}{8}= 0.
	\end{equation}
	For the sake of future reference, we note that
	$\sigma_\theta \le 1/2$.

	


	
We now state the $\theta$-IPAAL Method.
\vspace{4mm}	
	\hrule
\vspace{3mm}
	{\bf Static $\theta$-IPAAL Method}
	\vspace{2mm}
	\hrule
	\begin{itemize}
		\item[(0)] Let parameter $\theta \in (0,1]$, initial point $z_0 \in \dom h$,
		scalar $\bar c$ satisfying {\bf (A4)},
		tolerance $\hat \rho >0$, and penalty parameter $c \in \Re$ satisfying
		
		\begin{equation}\label{assump:penaltyparameter}
		\pen > 2\bar \pen
		\end{equation}
     be given, and set $p_0=0$, $k=1$ and
		\begin{equation}\label{definition of sigma}
		L_c:=L+c\|A\|^2,\qquad  \lambda = \frac{\tau_\theta}m,\qquad   \sigma:= \min\left\{\frac{1}{\sqrt{\lambda L_c + 1}}, \sigma_\theta\right\};
		\end{equation}
		
		
		\item [(1)] let
		\begin{equation}\label{def:ghtilde}
		g_k := f + (1-\theta) \Inner{\lm_{k-1}}{A\cdot -b}+\frac{\pen}{2}\|A\cdot -b\|^2
		\end{equation}
		and apply the ACG method with inputs
		\begin{equation}\label{def:psi-s e psi-n}
		\psi_s := 	\lam g_k + \frac{\tau_\theta}2 \|\cdot-\z_{k-1}\|^2, \quad \psi_n := \lam h +\frac{1-\tau_\theta}{2}\|\cdot-z_{k-1}\|^2,
		    \end{equation}
		\begin{equation}\label{def:Ms-muIPAAL}
		(M_s,\mu)= (\lam L_c+ \tau_\theta, 1-\tau_\theta), \quad x_0=z_{k-1}
		\end{equation}
		
		to obtain a triple $(z_k,v_k,\varepsilon_k)\in \Re^n\times\Re^n\times\Re_{+}$ satisfying 
		
		\begin{equation}\label{GIPPinc_ineq}
		v_k \in \partial_{\varepsilon_k} \left ( \lam {\cal L}^\theta_c(\cdot;\lm_{k-1}) + \frac12 \|\cdot-\z_{k-1}\|^2 \right) (\z_k),
		\quad \| v_k\|^2 + 2 \varepsilon_k \le \sigma^2 \| v_k + z_{k-1}- z_k\|^2;
		\end{equation}

		\item[(2)] compute a pair $(\hat z_k,\hat v_k)$ via  the refinement procedure with input $(g, h) =(g_k, h)$ and
		$(\lam,z^-,z, v) = (\lam,z_{k-1},z_k,v_k)$
		and set 
		\begin{equation}\label{def:wkpkhat}
		\hat \lm_k = (1-\theta)\lm_{k-1}+\pen\left(A\hat z_k-b\right);
		\end{equation}

		\item[(3)] if $\|\hat v_k \| \le \hat \rho$,
		then output $(\hat z,\hat v,\hat \lm):=(\hat z_k,\hat v_k,\hat \lm_k)$ and {\bf stop}; otherwise,  set
		\beq \label{def:multiplier}
		\lm_k = (1-\theta)\lm_{k-1}+\pen\left(Az_k-b\right)
		\eeq
		and  $k \leftarrow k+1$ and return to (1);
	\end{itemize}
	
	\hrule
	\vgap

	We now make a few remarks about  the static $\theta$-IPAAL method.
	First, it makes two types of iterations, namely,
	the outer ones indexed by $k$ and the ACG ones performed
	during its calls to the ACG method in step~1.
	Second, upon termination the method finds
	a pair satisfying the inclusion and the first inequality in \eqref{eq:approx_stationary1} but not necessarily the second inequality in \eqref{eq:approx_stationary1}. However, it is shown in Theorems~\ref{mainprop1} and \ref{mainprop2} below that the feasibility gap
	$\|A \hat z_k-b\|$
	of $\hat z_k$
	is either ${\cal O}(1/\sqrt{c})$ or ${\cal O}(1/c)$ depending on the
	assumptions made on problem \eqref{optl0}.
	Hence,  the static $\theta$-IPAAL method should not
	be viewed as an algorithm for
	solving \eqref{optl0} but rather an intermediary step in this
	direction.
	Indeed, its dynamic version described
	in Subsection \ref{subsec:dynamicIPAAL}, which:
	a) doubles $c$ whenever the aforementioned feasibility gap
	at the end of the static $\theta$-IPAAL method is not small enough, and; b) repeats the static $\theta$-IPAAL method with the updated $c$,
	is guaranteed to obtain an
	approximate solution of \eqref{optl0} in the sense of Definition~\ref{def:stationarypoint}.
	Finally, only the inequality
	in \eqref{GIPPinc_ineq} needs to be checked
	for terminating the call to the ACG method
	in step 1 since the inclusion is always satisfied by
	any ACG iterate (see Proposition~\ref{lem:nest_complex}~(a), \eqref{lagrangian2}, \eqref{def:ghtilde}, and  \eqref{def:psi-s e psi-n}). 

	Our goal now is to state an iteration-complexity result for
	the static $\theta$-IPAAL method under the assumptions
	introduced in Subsection~\ref{subsec:assump-Motivation}. Its bounds are described
	in terms of the quantity
	\begin{equation}\label{def:Rlamb}
	R_0 = R(z_0;\lam,\bar c) := \inf\left\{  \lambda [ (f+h)(z) - \phi^*_{\bar \pen} ]  + \|z-z_0\|^2: z\in {\mathcal F}\right\}
	\end{equation}
	where  ${\mathcal F}$ and $\phi^*_{\bar \pen}$ are as {\bf (A1)} and {\bf (A4)},  respectively.
	Note that 
	if \eqref{optl0} has an optimal solution,
	then we easily see by using \eqref{R-lam>=0} that
	\begin{equation}\label{ineq:majorizing-Rlambda}
    	0 \le R_0 \leq\lambda [ \phi^* - \phi^*_{\bar \pen} ]  + d_0^2,
		\end{equation}
where $\phi^*$ is as in \eqref{optl0} and
	\[
	d_0:=\inf \{\|z^*-z_0\|: z^* \; {\rm is\; an\; optimal  \; solution\; of\;     }\eqref{optl0}\}.
	\]
	Hence, under the assumption above, $R_0$ is majorized by
	a quantity expressed in terms of
	$d_0^2$ and the functional gap
	$\phi^* - \phi^*_{\bar \pen} \ge 0$.

	We are now ready to state the aforementioned iteration-complexity result for the static $\theta$-IPAAL method. Its proof will be given in Subsection~\ref{TechSubsec1}.

	\begin{theorem}\label{mainprop1}
		Let  $\tau_\theta$ and $R_0$ be  as in \eqref{def:tau}  and  \eqref{def:Rlamb}, respectively, and consider $\kappa_\theta$  as
		\begin{equation}\label{def:constante0}
		\kappa_\theta:= 
		1+\frac{16(1-\theta)}{\theta\tau_\theta}.
		\end{equation}
		Then, the following statements about the static $\theta$-IPAAL method hold: 
		\begin{itemize}
			\item[a)] its number of outer iterations
			is bounded by
			$
			\left\lceil 108\kappa_\theta R_0/(\lambda^2\hat \rho^2)\right\rceil;
			$
			\item[b)] the number of ACG iterations performed at each outer iteration is bounded by
			\begin{equation}\label{def:Thetac}
		\left\lceil 1+\sqrt{\Theta_c}\log^+_1\left(\frac{2\Theta_c}{\sigma_\theta}\right)\right\rceil
		\end{equation}
			where 	$\Theta_c:=2\lambda L_c + 1$  and  $L_c$ is as  in \eqref{definition of sigma};
			\item[c)] its output  $(\hat z,\hat v,\hat \lm)$ satisfies
			\begin{align}\label{Main_ineq-feasibility}
			\hat v\in\nabla f(\hat z)&+\partial h(\hat z)+A^*\hat \lm,\qquad\|\hat v\|\leq\hat{\rho}, \\[3mm]
			& \|A \hat z -b\| \leq \frac{4}{\lambda}\sqrt{\frac{\kappa_\theta R_0}{\pen}}. \label{eq:feasib}\end{align}
		\end{itemize}
\end{theorem}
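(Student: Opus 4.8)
The plan is to analyze the static $\theta$-IPAAL method as an inexact proximal point scheme applied to the $\theta$-AL function, combining a descent/telescoping argument on a suitable potential with the multiplier update to control both the residual $\|v_k+z_{k-1}-z_k\|$ and the feasibility gap. First, I would establish the basic per-iteration inequality: using the inclusion in \eqref{GIPPinc_ineq}, the definition of $\partial_{\varepsilon_k}$, and the strong convexity of $\lambda{\cal L}^\theta_c(\cdot;\lm_{k-1})+\tfrac12\|\cdot-z_{k-1}\|^2$ (valid since $\lambda<1/m$ by \eqref{def:tau} and \eqref{lowerCurvature-m}), derive a bound of the form ${\cal L}^\theta_c(z_k;\lm_{k-1})+\tfrac1{2\lambda}\|z_k-z_{k-1}\|^2 \le {\cal L}^\theta_c(z;\lm_{k-1})+\tfrac1{2\lambda}\|z-z_{k-1}\|^2 + (\text{error terms in }\varepsilon_k,\|v_k\|)$ for all $z$, in particular for feasible $z$. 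The term $(1-\theta)\langle\lm_{k-1},Az_k-b\rangle$ together with the multiplier recursion \eqref{def:multiplier} should then be manipulated — essentially completing the square in the multiplier — so that the $\lm$-dependent terms telescope, producing a potential function involving $\|\lm_k\|^2$ (or $\dist$ to a multiplier set) plus $\|z_k-z^*\|^2$-type terms that decreases up to the ACG errors. This is where $\kappa_\theta$ and the specific choice of $\sigma_\theta$ as the root of \eqref{def:sigmatheta} enter: the quadratic \eqref{def:sigmatheta} is precisely calibrated so that the accumulated error terms (which are $O(\sigma^2\|v_k+z_{k-1}-z_k\|^2)$ by the inequality in \eqref{GIPPinc_ineq}) are absorbed by the descent, leaving a clean summable bound $\sum_k \|v_k+z_{k-1}-z_k\|^2 \le c_0 R_0$.

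Next, for part~(a), I would use this summability together with Proposition~\ref{prop:approxsol}: since $\varepsilon_k\le\|v_k+z_{k-1}-z_k\|^2$-type quantities (from \eqref{GIPPinc_ineq}) and hence $\Delta_k\le\varepsilon_k$, the bound $\lambda\|\hat v_k\|\le\|v_k+z_{k-1}-z_k\|+2\sqrt{2(\lambda L_c+1)\Delta_k}$ shows $\|\hat v_k\|^2 = O(\|v_k+z_{k-1}-z_k\|^2/\lambda^2)$ up to the $(\lambda L_c+1)$ factor — but here one must be careful, since the stated bound $\lceil 108\kappa_\theta R_0/(\lambda^2\hat\rho^2)\rceil$ has no $L_c$ dependence, so the $\Delta_k$ contribution must instead be controlled by $\|v_k+z_{k-1}-z_k\|^2$ directly via a sharper identity relating $\Delta_k$, $\varepsilon_k$, and the prox-residual (likely $\Delta_k$ is bounded by a constant times $\|v_k+z_{k-1}-z_k\|^2$ without the $\lambda L_c$ factor, because $\Delta_k\le\varepsilon_k\le\tfrac12\sigma^2\|v_k+z_{k-1}-z_k\|^2$). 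Then if the method has not stopped by outer iteration $N$, we have $\|\hat v_k\|>\hat\rho$ for all $k\le N$, forcing $N\hat\rho^2 < \sum_k\|\hat v_k\|^2 \le (\text{const})\,R_0/\lambda^2$, which yields the claimed $O(\kappa_\theta R_0/(\lambda^2\hat\rho^2))$ outer-iteration bound after pinning down the constant $108$.

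For part~(b), I would simply invoke Proposition~\ref{lem:nest_complex}(b) with the parameter identifications \eqref{def:psi-s e psi-n}–\eqref{def:Ms-muIPAAL}: $M_s=\lambda L_c+\tau_\theta$, $\mu=1-\tau_\theta$, and $\sigma$ as in \eqref{definition of sigma}. Plugging $M_s/\mu$ and $\sigma^{-1}$ into the bound \eqref{boundACG} and using $\sigma\ge$ the smaller of $(\lambda L_c+1)^{-1/2}$ and $\sigma_\theta$, together with $\tau_\theta\in(0,1)$ to bound $M_s/\mu$ by a constant times $\lambda L_c+1$, gives \eqref{def:Thetac} with $\Theta_c=2\lambda L_c+1$; this step is essentially bookkeeping, checking that ACG's strongly-convex assumption $4M_s\ge\mu$ holds and that the stopping inequality of \eqref{GIPPinc_ineq} matches \eqref{mainprob:nesterov}. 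For part~(c): the inclusion and $\|\hat v\|\le\hat\rho$ are immediate from step~(3) of the algorithm, the definition of $\hat\lm_k$ in \eqref{def:wkpkhat}, and Proposition~\ref{prop:approxsol}(a) applied with $g=g_k$ (noting $\nabla g_k(\hat z) = \nabla f(\hat z)+A^*[(1-\theta)\lm_{k-1}+c(A\hat z-b)] = \nabla f(\hat z)+A^*\hat\lm_k$). For the feasibility estimate \eqref{eq:feasib}, I would use the multiplier recursion to express $c(Az_k-b) = \lm_k-(1-\theta)\lm_{k-1}$, hence $\|Az_k-b\|$ in terms of $\|\lm_k\|$ and $\|\lm_{k-1}\|$, and bound $\|\lm_k\|$ via the potential-function bound $\|\lm_k\|^2 = O(\kappa_\theta R_0)$ established in the main argument; the $\sqrt{1/c}$ factor arises because $Az_k-b = (\lm_k-(1-\theta)\lm_{k-1})/c$ while $\|\lm_k\|$ itself is $O(\sqrt{c})$ (the potential bounds $\|\lm_k\|^2/c$, not $\|\lm_k\|^2$). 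Finally, passing from $\|Az_k-b\|$ to $\|A\hat z_k-b\|$ uses $\|\hat z_k-z_k\|\le\sqrt{2(\lambda L_c+1)^{-1}\Delta_k}$ from Proposition~\ref{prop:approxsol}(a) and $\|A\|$, with the $\Delta_k$-term again absorbed into the $R_0$ bound.

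The main obstacle will be the bookkeeping in the potential/telescoping argument: correctly identifying the Lyapunov function that makes the multiplier terms telescope despite $\theta\ne 0$ (the $(1-\theta)$ contraction in \eqref{def:multiplier} is essential and is exactly what the weird coefficients in \eqref{def:sigmatheta} and $\kappa_\theta$ in \eqref{def:constante0} are tuned against), and then verifying that all the ACG-error terms $\varepsilon_k$, $\Delta_k$, $\|v_k\|^2$ are uniformly dominated by $\sigma^2\|v_k+z_{k-1}-z_k\|^2$ with a small enough constant that the summable bound $\sum_k\|v_k+z_{k-1}-z_k\|^2\le (\text{const})\,R_0$ comes out with the explicit constant matching $108\kappa_\theta$. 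Everything else — parts (b) and (c) — is a routine consequence once this quantitative descent estimate is in hand.
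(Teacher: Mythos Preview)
Your high-level plan matches the paper's: build a potential that decreases by $\Theta(\|r_k\|^2)$ per outer iteration (where $r_k := z_{k-1}-z_k+v_k$), sum to get $\sum_k\|r_k\|^2 \le 12\kappa_\theta R_0$, and then use $\|\hat v_k\|\le 3\|r_k\|/\lambda$. You are right that the $L_c$-dependence disappears because $\sigma\le(\lambda L_c+1)^{-1/2}$ kills the $\sqrt{\lambda L_c+1}$ factor coming from Proposition~\ref{prop:approxsol}, and parts (b) and (c) are indeed routine once (a) is in place. For the feasibility bound the paper reads $\tfrac{c}{4}\|Az_k-b\|^2$ directly off the potential (Lemma~\ref{lem:T+eta_decrease}(b)) rather than going through $\|\lm_k\|$ as you propose; your route would work but introduces an unnecessary $\theta$-dependence and fails at $\theta=1$.

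Where you underestimate the difficulty is the descent step itself. The potential is \emph{not} of the form ``$\|z_k-z^*\|^2$ plus multiplier terms''; it is ${\cal L}^\theta_c(z_k,\lm_k)+\eta_k$, where $\eta_k$ is a linear combination of $\|\Delta\lm_k\|^2$, $\|\Delta z_k\|^2$, $\|r_k\|^2$, and $\|\lm_k\|^2$. Your ``completing the square in the multiplier'' gives only Lemma~\ref{aux:lemma-auglagb}:
\[
{\cal L}^\theta_c(z_k,\lm_k)-{\cal L}^\theta_c(z_{k-1},\lm_{k-1})
\le -\tfrac{1-\sigma^2}{2\lambda}\|r_k\|^2
+\tfrac{(1-\theta)(2-\theta)}{2c}\|\Delta\lm_k\|^2
+(\text{telescoping in }\|\lm_k\|^2),
\]
and the $\|\Delta\lm_k\|^2$ term does \emph{not} telescope. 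The heart of the argument is a separate estimate (Lemma~\ref{pr:aux-new2}) bounding $\tfrac{1}{c}\|\Delta\lm_k\|^2$ by a constant times $\|r_k\|^2$ plus genuinely telescoping quantities. This is obtained by writing the $\varepsilon$-optimality inclusions at iterations $k$ and $k+1$ as two inclusions for the \emph{same} strongly convex function, using strong convexity in the seminorm $\|\cdot\|_Q^2$ with $Q=(1-\lambda m)I+c\lambda A^*A$ to extract a $c\lambda\|A\Delta z_{k+1}\|^2$ term, and then invoking the recursion $\Delta\lm_{k+1}=(1-\theta)\Delta\lm_k+cA\Delta z_{k+1}$ to convert this into the desired multiplier-increment bound. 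The coefficients in \eqref{def:sigmatheta} and the choice of $\tau_\theta$ are calibrated against \emph{this} inequality (so that the resulting descent coefficient $C_\theta\ge 1/8$), not against the simpler square-completion step; this is a genuine idea, not bookkeeping.
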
	
	
	We will now state an alternative iteration-complexity
	result for the static $\theta$-IPAAL method which holds under the same assumptions
	stated in Subsection~\ref{subsec:assump-Motivation} plus a new set of mild
	conditions which include the existence of a Slater point
	for \eqref{optl0}. The main extra consequence of 
	this result is that
	the feasibility gap of $\hat z$ is now shown to
	be ${\cal O}(1/c)$ instead of ${\cal O}(1/\sqrt{c})$.
	The extra conditions assumed on problem \eqref{optl0} 
	are as follows:

	\begin{itemize}
		\item[{\bf (B1)}]
		there exists $\bar z \in \inte(\dom h)$ such that $A\bar z=b$;
		\item[{\bf (B2)}] for any $\alpha \in \Re$,  
		\[
		D(\alpha) := \sup \{ \|x-x'\| : x, x' \in L_{\phi_{\bar{\pen}}}(\alpha) \} < \infty,
		\]
		where $\bar \pen$ is as in (A4); hence, the sublevel set $L_{\phi_{\bar{\pen}}}(\alpha)$ is bounded for any $\alpha \in \Re$;
		\item[{\bf (B3)}] for any $\alpha \in \Re$, 
		\[
		S(\alpha) := \inf \left\{ s \in \Re_+ : \partial h( z) \subseteq \cball{0}{s} + N_{\dom h}(z), \ \forall z \in L_{\phi_{\bar{\pen}}}(\alpha)) \right\} < \infty.
		\]
	\end{itemize}
	
	We now make a few remarks about conditions
	{\bf (B1)}--{\bf(B3)}. First, {\bf (B1)} is the so called
	Slater condition for \eqref{optl0}. Second,
	{\bf (B2)} trivially
	holds if $\dom h$ is bounded.
	Third, the latter condition  plus
	conditions {\bf (B1)} and
	{\bf (B3)} are exactly the ones assumed
	in \cite{PPmetNonconvex2019} where the complexity
	stated for $\theta$-IPAAL method in Theorem~\ref{mainprop1} was established
	for the penalty scheme studied there.

	We are now ready to state the
	aforementioned alternative
	iteration-complexity for the  static $\theta$-IPAAL method.
Its proof will be given only
in Subsection~\ref{proof of main theo2}.
	\begin{theorem}\label{mainprop2}
	In addition to {\bf (A1)}--{\bf (A4)}, assume that conditions {\bf (B1)}--{\bf (B3)}  hold. Let $R_0$, $\kappa_\theta$, and $\bar{z}$ be as in   \eqref{def:Rlamb}, \eqref{def:constante0}, and {\bf (B1)}, respectively. 
	 Define $N_0 = N(z_0;\theta,\lambda,\bar c, \bar z)$
	 as
\begin{equation}\label{N0}
	N_0 := \frac{1}{\sigma^+(A)}
		\left( 1+ \frac{\max\{ \sqrt{3\kappa_\theta R_0} , D(\alpha)\} }{\mbox{\rm dist}_{\partial(\dom h )} (\bar z)} \right) \left[
		\|\nabla f(z_0)\| +L D(\alpha)+ S(\alpha)  + \left(L+\frac{14}{\lambda \sqrt{\theta}}\right) \sqrt{3\kappa_\theta R_0}  \right], 
		\end{equation}
		where $D(\alpha)$ and $S(\alpha)$ are as in
		 {\bf (B2)} and {\bf (B3)} with
		 $\alpha:= \kappa_\theta + \max\{\phi_{\bar{c}}(z_0),\phi_{\bar{c}}(\bar z)\}.$
			Then, the output $(\hat z,\hat v,\hat \lm)$ of the static $\theta$-IPAAL method satisfies the relations in \eqref{Main_ineq-feasibility} and 
		\begin{equation}
		\|A \hat z -b\| \leq \frac{2N_0}{c}. \label{eq:feasib3}
		\end{equation}
	\end{theorem}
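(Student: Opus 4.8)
The plan is to build on the conclusions of Theorem~\ref{mainprop1}, which already give us the inclusion and the bound $\|\hat v\|\le\hat\rho$ in \eqref{Main_ineq-feasibility}, so that only the sharpened feasibility estimate \eqref{eq:feasib3} has to be proved. The strategy will be the standard one for passing from an $O(1/\sqrt c)$ to an $O(1/c)$ feasibility bound under a Slater-type condition: bound the norm of the approximate multiplier $\hat\lm$ independently of $c$, and then read the feasibility gap off the Lagrange-multiplier update rule. Indeed, from \eqref{def:wkpkhat} one has $c(A\hat z-b)=\hat\lm-(1-\theta)\lm_{k-1}$, where $k$ is the terminating outer iteration; hence $\|A\hat z-b\|\le(\|\hat\lm\|+(1-\theta)\|\lm_{k-1}\|)/c$, and everything reduces to producing a $c$-independent bound of the form $\max\{\|\hat\lm\|,\|\lm_{k-1}\|\}\le N_0$ with $N_0$ as in \eqref{N0}.

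To get such a bound I would proceed in three steps. First, use assumption {\bf (A4)} together with the estimate \eqref{eq:feasib} of Theorem~\ref{mainprop1} and the definition of $R_0$ to show that all the iterates $\hat z_k$ (and a fortiori the relevant ones $z_{k-1}$) stay inside the sublevel set $L_{\phi_{\bar c}}(\alpha)$ for the value $\alpha=\kappa_\theta+\max\{\phi_{\bar c}(z_0),\phi_{\bar c}(\bar z)\}$ named in the theorem; this is where {\bf (B2)} enters, giving $\|\hat z_k-\bar z\|\le D(\alpha)$ and keeping the whole trajectory in a bounded region whose diameter does not depend on $c$. Second, since $\hat v\in\nabla f(\hat z)+\partial h(\hat z)+A^*\hat\lm$, rewrite this as $A^*\hat\lm=\hat v-\nabla f(\hat z)-s$ for some subgradient $s\in\partial h(\hat z)$, decompose $s=w+\nu$ with $w\in\bar B(0;S(\alpha))$ and $\nu\in N_{\dom h}(\hat z)$ using {\bf (B3)}, and then test this identity against the vector $\bar z-\hat z$ pointing to the Slater point. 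Because $\bar z\in\inte(\dom h)$, the normal-cone term contributes $\langle\nu,\bar z-\hat z\rangle\le 0$, and $A^*\hat\lm$ paired with $\bar z-\hat z$ gives, via $A\bar z=b$, exactly $-\langle\hat\lm,A\hat z-b\rangle$; combined with a lower bound of the form $\langle A^*\hat\lm,\hat z-\bar z\rangle\ge\mathrm{dist}_{\partial(\dom h)}(\bar z)\,\|\cP_A\hat\lm\|$ — the key geometric inequality coming from Slater's condition, namely that a ball of radius $\mathrm{dist}_{\partial(\dom h)}(\bar z)$ around $\bar z$ lies in $\dom h$ — one isolates $\|\hat\lm\|$ (which equals $\|\cP_A\hat\lm\|$ since we may take $\hat\lm\in\mathrm{Im}(A)$) in terms of $\|\hat v\|$, $\|\nabla f(\hat z)\|$, $S(\alpha)$, $D(\alpha)$, and the feasibility gap itself. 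Third, close the loop: the feasibility gap appearing on the right is already controlled by \eqref{eq:feasib}, which is $O(1/\sqrt c)$ and in particular bounded by a $c$-independent quantity proportional to $\sqrt{\kappa_\theta R_0}/\lambda$ once $c\ge 2\bar c$; substituting $\|\nabla f(\hat z)\|\le\|\nabla f(z_0)\|+LD(\alpha)$ (Lipschitz gradient plus the diameter bound) and $\|\hat v\|\le\hat\rho\le(\text{something}\cdot\sqrt{\kappa_\theta R_0}/\lambda)$ — or more carefully tracking the $14/(\lambda\sqrt\theta)$-type constants that already show up in the proof of Theorem~\ref{mainprop1} — yields precisely the bound $\|\hat\lm\|\le N_0$, and an entirely parallel (in fact easier, since $\|v_k+z_{k-1}-z_k\|$ is under control) argument handles $\|\lm_{k-1}\|$.

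The main obstacle I anticipate is the bookkeeping of constants needed to match the exact expression \eqref{N0}: the factor $1+\max\{\sqrt{3\kappa_\theta R_0},D(\alpha)\}/\mathrm{dist}_{\partial(\dom h)}(\bar z)$ strongly suggests that the argument is applied not to a fixed $\hat z$ but along a segment or is iterated, so that the ``displacement towards $\bar z$'' has to be scaled by how far $\hat z$ can be from $\bar z$ relative to the Slater radius — getting the two cases in that max right, and confirming that the residual $\|v_k+z_{k-1}-z_k\|$ terms collapse into the $\sqrt{3\kappa_\theta R_0}$ factor rather than producing new $c$-dependent pieces, is the delicate part. A secondary technical point is justifying the inequality $\langle A^*\hat\lm,\hat z-\bar z\rangle\ge\mathrm{dist}_{\partial(\dom h)}(\bar z)\|\hat\lm\|$ cleanly; this is a one-line consequence of Slater's condition and the support-function characterization of the projection onto $\mathrm{Im}(A)$, but it must be stated carefully because $\hat z$ need not be feasible. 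Everything else — the sublevel-set invariance, the $N_{\dom h}$ sign, the final division by $c$ — is routine once these two points are pinned down, and I expect the proof to mirror closely the structure already used for the penalty scheme of \cite{PPmetNonconvex2019} referenced in the remarks following {\bf (B3)}.
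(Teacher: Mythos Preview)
Your overall architecture is correct and matches the paper: reduce \eqref{eq:feasib3} to a $c$-independent bound $\max\{\|\hat\lm_k\|,\|\lm_{k-1}\|\}\le N_0$ and then read the feasibility gap off \eqref{def:wkpkhat}. The sublevel-set step is also essentially right (the paper obtains $z_k\in L_{\phi_{\bar c}}(\alpha)$ from the merit-function inequality \eqref{aux:eq:T+eta00} combined with \eqref{ineq:p1po}, not from \eqref{eq:feasib}, and then gets $\hat z_k$ close to $z_k$ via \eqref{ineqzztilde}; but this is a detail).

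The genuine gap is the inequality you call the ``key geometric inequality'':
\[
\langle A^*\hat\lm,\hat z-\bar z\rangle \ \ge\ \mathrm{dist}_{\partial(\dom h)}(\bar z)\,\|\cP_A\hat\lm\|.
\]
This is simply false (take $\hat z=\bar z$; the left-hand side vanishes while the right-hand side need not). The Slater-ball argument you describe (Lemma~\ref{lem:bound_xiN} in the paper) yields $\mathrm{dist}_{\partial(\dom h)}(\bar z)\,\|\xi\|\le\langle\xi,\hat z-\bar z\rangle$ only for $\xi\in N_{\dom h}(\hat z)$, and $A^*\hat\lm$ is not a normal vector. Consequently your pairing of the ``upper bound'' (after dropping $\langle\nu,\bar z-\hat z\rangle\le 0$) with this ``lower bound'' does not isolate $\|\hat\lm\|$.

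The paper's route avoids this trap by reversing the roles. It applies Lemma~\ref{lem:bound_xiN} to the normal-cone component $\hat\xi_k^N=\nu$, obtaining
\[
\mathrm{dist}_{\partial(\dom h)}(\bar z)\,\|\hat\xi_k^N\|\ \le\ \langle\hat\xi_k^N,\hat z_k-\bar z\rangle
= \langle \hat v_k-\nabla f(\hat z_k)-\hat\xi_k^s,\hat z_k-\bar z\rangle-\langle\hat\lm_k,A\hat z_k-b\rangle.
\]
The dangerous cross term $\langle\hat\lm_k,A\hat z_k-b\rangle$ is then bounded \emph{using the already-established $c$-dependent estimates} of Proposition~\ref{mainauxprop1}: $\|\hat\lm_k\|\le 6\sqrt{c\kappa_\theta R_0/(\theta\lambda)}$ and $\|A\hat z_k-b\|\le 4\sqrt{\kappa_\theta R_0/(\lambda c)}$, whose product is $c$-independent (this is the crucial cancellation you are missing). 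This yields a $c$-free bound on $\|\hat\xi_k\|$ (Lemma~\ref{lem:xikbounded}), hence on $\|A^*\hat\lm_k\|=\|\hat v_k-\nabla f(\hat z_k)-\hat\xi_k\|$, and only then is $\|\hat\lm_k\|\le\|A^*\hat\lm_k\|/\sigma^+(A)$ invoked (Lemma~\ref{lem:sigma+}, using $\hat\lm_k\in\mathrm{Im}\,A$). The bound on $\|\lm_k\|$ follows from $\|\lm_k-\hat\lm_k\|=c\|A(z_k-\hat z_k)\|$ and \eqref{ineqzztilde}. So the missing idea is not a sharper Slater inequality for $A^*\hat\lm$, but the bootstrap: use the crude $O(\sqrt c)$ multiplier bound and $O(1/\sqrt c)$ feasibility bound from Theorem~\ref{mainprop1}/Proposition~\ref{mainauxprop1} to kill the cross term first, bound the subgradient, and only then extract the $c$-independent multiplier bound via $\sigma^+(A)$.
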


		We now make a few remarks about Theorems~\ref{mainprop1} and \ref{mainprop2}. First, given a tolerance $\hat \eta$, define
		\[
		c(\hat \eta):= \max\left \{2\bar{c},\frac{16\kappa_\theta R_0}{\lambda^2\hat\eta^2}\right\}.
		\]
		Clearly, in view of \eqref{Main_ineq-feasibility} and \eqref{eq:feasib}, it follows that for any $c > c(\hat \eta)$, the output
		$(\hat z,\hat v,\hat \lm)$ of the static $\theta$-IPAAL method is a $(\hat \rho,\hat \eta)$-approximate stationary point of \eqref{optl0}
		(see Definition~\ref{def:stationarypoint}).
		Second,  under assumptions {\bf (A1)}--{\bf (A4)} and {\bf (B1)}--{\bf (B3)},   Theorem~\ref{mainprop2} implies that
		$\|A\hat z-b\| \leq \hat \eta$
		for any $c$		satisfying 
		\begin{equation}\label{secCond-penaltyc}
		c > \max\left\{2\bar{c},\frac{2 N_0}{\hat \eta}\right\},
		\end{equation}
		and hence  the output $(\hat z,\hat v,\hat \lm)$ of the static $\theta$-IPAAL method is a $(\hat\rho,\hat\eta)$-approximate stationary point of \eqref{optl0}.
		Third, since either $c(\hat \eta)$ or
		the right-hand side of \eqref{secCond-penaltyc}
		is difficult to compute
		due to its dependence on $\kappa_\theta$ or $N_0$,
		it is usually not possible to have at our immediate disposal a scalar $c$ as in the previous two remarks,
		and hence to find a $(\hat \rho,\hat \eta)$-approximate stationary point of \eqref{optl0}
		by just solving a single penalized subproblem. The next subsection presents a dynamic version of the static $\theta$-IPAAL method which dynamically
		updates the penalty parameter $c$ and whose
		overall ACG iteration-complexity is similar to
		either one
		the complexities obtained above with
		$c$ chosen as $c(\hat \eta)$ or the
		right-hand side of \eqref{secCond-penaltyc}.

	\subsection{The $\theta$-IPAAL method and its iteration-complexity}\label{subsec:dynamicIPAAL}
	This subsection presents a dynamic version of the $\theta$-IPAAL method  to obtain 
	a $(\hat \rho,\hat \eta)$-approximate stationary point to \eqref{optl0}. The method basically consists of 
	applying the static $\theta$-IPAAL method  and repeatedly
	doubling the penalty parameter until the second inequality in   \eqref{eq:approx_stationary1}
	is satisfied. The main iteration-complexity result of this scheme is also presented and proved in this subsection.

	We start by stating the dynamic version of
	the $\theta$-IPAAL method, which will be simply
	referred to the $\theta$-IPAAL method for shortness. 
	\vgap
	\hrule
	\noindent
	\\
	{\bf $\theta$-IPAAL method}
	\\
	\hrule
	\begin{itemize}
		\item[(0)] Let  an initial point $z_0 \in \dom h$, a scalar $\theta\in (0,1]$, and  a pair of tolerances $(\hat \rho, \hat \eta)\in \Re_{++}\times\Re_{++}$ be given, choose an initial  penalty parameter $\pen_1>2\bar c$ and set $c=c_1$; 
		\item[(1)] execute the static $\theta$-IPAAL method with input $(z_0,\theta,c,\hat \rho)$, and  let $(\hat z, \hat v,\hat p)$ be the output.
		\item[(2)] if $\|A \hat z-b\|\leq \hat \eta$, stop and output $(\hat z, \hat v,\hat p)$; otherwise, set $c\leftarrow 2c$ and return to step~1.  
	\end{itemize}
	\hrule
	\vgap
	
	The next result establishes the overall ACG
	iteration-complexity of
	the  $\theta$-IPAAL method for obtaining
	a  $(\hat \rho,\hat \eta)$-approximate stationary point of \eqref{optl0}.

\begin{theorem}\label{mainTheo2} Assume that conditions {\bf (A1)}--{\bf(A4)}  of
	Subsection \ref{subsec:assump-Motivation} hold. Then,
	the following statements about
	the $\theta$-IPAAL method hold:
	\begin{itemize}
		\item[a)]	it obtains an approximate stationary point $(\hat z,\hat v,\hat \lm)$ of problem~\eqref{optl0} in the sense of Definition~\ref{def:stationarypoint} in at most 
		\begin{equation}\label{bound001}
		{\mathcal O}\left(\left\lceil\frac{R_0}{\lambda^2\hat\rho^2}\right\rceil\left[\sqrt{\lambda\max\left\{ c_1,\frac{R_0}{\lambda^2\hat\eta^2}\right\}}\|A\|	
		+ \sqrt{\lambda L+1}\log^+_1\left(\frac{R_0}{c_1\lambda^2\hat\eta^2}\right)\right]\log^+_1\left(T_{\hat{\eta}}\right)\right),
		\end{equation}	
		ACG iterations,	where  $R_0$  is as in \eqref{def:Rlamb}, and 
		\begin{equation}\label{def:bartheta}
		T_{\hat \eta}:= \lambda\left(L +\max\left\{c_1,\frac{R_0}{\lambda^2\hat\eta^2}\right\}\|A\|^2\right)+1;		
		\end{equation}
		
		\item[b)] if, in addition, 	  the conditions  {\bf (B1)}--{\bf (B3)}
		of Subsection \ref{subsec-mainAlg} hold, then its overall
		ACG iteration-complexity is	
		\begin{equation} \label{eq:ttt1}
		{\mathcal O}\left(\left\lceil\frac{R_0}{\lambda^2\hat\rho^2}\right\rceil\left[\sqrt{\lambda\max\left\{c_1,\frac{ N_0}{\hat \eta}\right\}}\|A\|	
		+ \sqrt{\lambda L+1}\log^+_1\left(\frac{N_0}{c_1\hat\eta}\right)\right]\log^+_1\left(\tilde T_{\hat{\eta}}\right)\right),
		\end{equation}
		where
		$$
		\tilde T_{\hat\eta}:=\lambda\left(L +\max\left\{c_1,\frac{ N_0 }{\hat\eta}\right\}\|A\|^2\right)+1
		$$
		and $N_0$ is as in \eqref{N0}.
	\end{itemize}
\end{theorem}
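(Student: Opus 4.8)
The plan is to regard the dynamic $\theta$-IPAAL method as a finite sequence of calls to the static $\theta$-IPAAL method with geometrically increasing penalty parameters $c=c_1,2c_1,4c_1,\dots$, to bound both the number of calls and the final penalty parameter, and then to sum the ACG-iteration cost of each call using Theorem~\ref{mainprop1} for part~(a) and Theorem~\ref{mainprop2} for part~(b). Throughout, all multiplicative constants depend only on $\theta$ (through $\kappa_\theta$, $\sigma_\theta$, $\tau_\theta$) and are absorbed into the $\mathcal O(\cdot)$.

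First I would bound the final penalty parameter. Write $c_j:=2^{j-1}c_1$ for the value of $c$ used in the $j$-th call, and let $J$ be the index of the last call; note that $c_j>2\bar c$ for all $j$ since $c_1>2\bar c$ and $c$ only doubles, so Theorem~\ref{mainprop1} (resp.\ Theorem~\ref{mainprop2}) applies to every call. For part~(a), \eqref{eq:feasib} shows that the static method's output obeys $\|A\hat z-b\|\le(4/\lambda)\sqrt{\kappa_\theta R_0/c}$, so the step~2 test $\|A\hat z-b\|\le\hat\eta$ must succeed once $c\ge 16\kappa_\theta R_0/(\lambda^2\hat\eta^2)$; since the $c_j$ overshoot this threshold by at most a factor of two, $c_J\le 2\max\{c_1,\,16\kappa_\theta R_0/(\lambda^2\hat\eta^2)\}=\mathcal O(\max\{c_1,\,R_0/(\lambda^2\hat\eta^2)\})$ and $J=\mathcal O(\log^+_1(R_0/(c_1\lambda^2\hat\eta^2)))$. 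For part~(b), replacing \eqref{eq:feasib} by \eqref{eq:feasib3} (valid under {\bf (B1)}--{\bf(B3)}) gives $c_J=\mathcal O(\max\{c_1,\,N_0/\hat\eta\})$ and $J=\mathcal O(\log^+_1(N_0/(c_1\hat\eta)))$, with $N_0$ as in \eqref{N0}.

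Next I would sum the ACG cost. The key structural point is that by Theorem~\ref{mainprop1}(a) the number of outer iterations of each call is at most $\lceil108\kappa_\theta R_0/(\lambda^2\hat\rho^2)\rceil$, which is \emph{independent of} $c_j$, while by Theorem~\ref{mainprop1}(b) each outer iteration of the $j$-th call costs at most the quantity \eqref{def:Thetac} with $\Theta_{c_j}=2\lambda(L+c_j\|A\|^2)+1$. Hence the total number of ACG iterations is at most $\lceil108\kappa_\theta R_0/(\lambda^2\hat\rho^2)\rceil\cdot\sum_{j=1}^{J}\bigl(2+\sqrt{\Theta_{c_j}}\,\log^+_1(2\Theta_{c_j}/\sigma_\theta)\bigr)$. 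Using $\sqrt{\Theta_{c_j}}\le\sqrt{2\lambda L+1}+\|A\|\sqrt{2\lambda c_j}$ together with $c_j=2^{j-1}c_1$, the geometric-series estimate $\sum_{j=1}^{J}(\sqrt2)^{j-1}=\mathcal O((\sqrt2)^{J})=\mathcal O(\sqrt{c_J/c_1})$ yields $\sum_{j=1}^{J}\sqrt{\Theta_{c_j}}=\mathcal O\bigl(\sqrt{\lambda L+1}\,J+\|A\|\sqrt{\lambda c_J}\bigr)$; moreover $\Theta_{c_j}\le\Theta_{c_J}$ and, by the bound on $c_J$ above, $\Theta_{c_J}=\mathcal O(T_{\hat\eta})$ with $T_{\hat\eta}$ as in \eqref{def:bartheta} (for part~(b), $\Theta_{c_J}=\mathcal O(\tilde T_{\hat\eta})$), so every logarithmic factor is $\mathcal O(\log^+_1(T_{\hat\eta}))$ (resp.\ $\mathcal O(\log^+_1(\tilde T_{\hat\eta}))$). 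Substituting the bounds on $J$ and $c_J$ from the previous paragraph into this estimate produces \eqref{bound001} for part~(a) and, by the analogous substitution, \eqref{eq:ttt1} for part~(b).

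The bookkeeping of the $\theta$-dependent constants and the elementary geometric-series estimate are routine; the one point that genuinely must be noticed is that the per-call \emph{outer}-iteration bound from Theorem~\ref{mainprop1}(a) does not involve the penalty parameter, which is exactly what lets one factor the total bound as a $c$-free term times $\sum_{j}\sqrt{\Theta_{c_j}}\log^+_1(\cdots)$, after which the geometric growth of the $c_j$ makes that remaining sum dominated — up to the logarithmic factor and a constant — by its last term $\sqrt{\Theta_{c_J}}$. No serious obstacle is expected beyond these observations, since Theorems~\ref{mainprop1} and~\ref{mainprop2} supply all the quantitative input.
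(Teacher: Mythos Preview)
Your proposal is correct and follows essentially the same approach as the paper's own proof: bound the last penalty $c_J$ and the number of calls $J$ via \eqref{eq:feasib} (respectively \eqref{eq:feasib3}), factor out the $c$-independent outer-iteration bound from Theorem~\ref{mainprop1}(a), and then control $\sum_j\sqrt{\Theta_{c_j}}$ by a geometric series while bounding the logarithmic factor uniformly by $\log^+_1(\Theta_{c_J})=\mathcal O(\log^+_1(T_{\hat\eta}))$. The paper's argument is line-for-line the same; the only cosmetic difference is that the paper writes out the geometric-sum estimate as $\sum_l\sqrt{\Theta_{c_l}}\le 8\sqrt{2\lambda c_{\bar l}}\,\|A\|+\bar l\sqrt{2\lambda L+1}$ before substituting the bounds on $c_{\bar l}$ and $\bar l$.
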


\begin{proof}
	
	a)	First note that the $l$-th loop of the $\theta$-IPAAL method invokes the static $\theta$-IPAAL method with  penalty parameter $c=c_l$ where $c_l :=2^{l-1}c_1> 2\bar{c}$ for all $l \ge 1$.
	Hence, in view  of the stopping criterion in step~2 of the $\theta$-IPAAL method  and Theorem~\ref{mainprop1} c),  we conclude that the  $\theta$-IPAAL method  performs at most $\bar{l}$ iterations  where  
	\begin{equation}\label{ineq:cl}	
	\bar{l}:=\min\left\{ l:  c_l\geq \frac{16 \kappa_\theta R_0}{\lambda^2\hat\eta^2}\right\}
	\end{equation}
	and its output $(\hat z,\hat v,\hat \lm)$ is an approximate stationary point  of problem~\eqref{optl0}.
	Moreover, it follows from  statements a) and b) of Theorem~\ref{mainprop1}  that the total number of ACG iterations is bounded by 
	\begin{equation}\label{aux:totalacg}
	\left(\sum_{l=1}^{\bar{l}} \left\lceil1+\sqrt{\Theta_{c_ l}}\log^+_1\left(\frac{2\Theta_{c_l}}{\sigma_\theta}\right)\right\rceil\right)\left\lceil  \frac{108\kappa_\theta R_0}{\lambda^2\hat\rho^2}\right\rceil ,
	\end{equation}
	where 
	$\Theta_{c_l}=2\lambda L_{c_l}+1$.
In view of the  above definition of $c_l$ and \eqref{ineq:cl}, we have
	\begin{equation}\label{ineq:aux-clbar}
	c_l \leq \max\left\{c_1,\frac{32 \kappa_\theta R_0}{\lambda^2\hat\eta^2}\right\}, \qquad \forall l=1,\ldots, {\bar l}.
	\end{equation} 
	Hence,  \eqref{def:bartheta},  \eqref{ineq:aux-clbar},   and the definitions of $\Theta_{c_l}$ and  $L_{c_l}$ (see \eqref{definition of sigma}) imply that
	\begin{equation}\label{estimatingThetal}
	\Theta_{c_l}= \left[2\lambda \left(L +c_l\|A\|^2\right)+1\right]\leq  \left[2\lambda \left(L +\max\left\{c_1,\frac{32 \kappa_\theta R_0}{\lambda^2\hat\eta^2}\right\}\|A\|^2\right)+1\right]= {\mathcal O}\left(T_{\hat \eta}\right).
	\end{equation}
	It also  follows from  the definitions of $c_l$, $\Theta_{c_l}$, and  $L_{c_l}$ that
	\begin{align*}
	\sum_{l=1}^{\bar{l}} \sqrt{\Theta_{c_l}} 
	&= \sum_{l=1}^{\bar{l}}\sqrt{\left[2\lambda\left(L +2^{l-1}\pen_1\|A\|^2\right)+1\right]}\leq  \sum_{l=1}^{\bar{l}}\left(\sqrt{\lambda\pen_1\|A\|^22^{l}}+\sqrt{2\lambda L+1}\right)\\
	&\leq  8\sqrt{\lambda\pen_1} \|A\|\sqrt{2}^{\bar{l}}+\bar{l}\sqrt{2\lambda L+1}=8\sqrt{2 \lambda\pen_{\bar l}} \|A\|+\bar{l}\sqrt{2\lambda L+1}.
	\end{align*}
From the above inequalities,  \eqref{ineq:aux-clbar} and definition of $\bar{l}$ in \eqref{ineq:cl}, we have
		\begin{align*}
	\sum_{l=1}^{\bar{l}} \sqrt{\Theta_{c_l}} 
	&\leq 8\sqrt{2\lambda} \|A\|\sqrt{\max\left\{c_1,\frac{32 \kappa_\theta R_0}{\lambda^2\hat\eta^2}\right\}}+ \sqrt{2\lambda L+1}\log^+_1\left(\frac{64\kappa_\theta R_0}{c_1\lambda^2\hat\eta^2}\right)\\
	&= {\mathcal O}\left(\|A\|\sqrt{\lambda\max\left\{c_1,\frac{R_0}{\lambda^2\hat\eta^2}\right\}}+ \sqrt{\lambda L+1}\log^+_1\left(\frac{R_0}{c_1\lambda^2\hat\eta^2}\right)\right).
	\end{align*}
	Since $\sqrt{\Theta_{c_l}}\log^+_1(t)\geq 1$ for every $t>0$,  statement a) follows from the above  conclusions, \eqref{aux:totalacg} and \eqref{estimatingThetal}.	
	
	b) The proof of this statement is similar
	to the one of a), by noting that  it uses Theorem~\ref{mainprop2} instead of Theorem~\ref{mainprop1} c), and the definition of $\bar l$ in \eqref{ineq:cl}	should be modified to
	\[\bar{l}:=\min\left\{ l: \pen_l \geq 
	\frac{2N_0}{\hat \eta}\right\},
	\]
	and then \eqref{ineq:aux-clbar} is replaced by
	$$
	c_l \leq \max\left\{c_1,\frac{4 N_0}{\hat\eta}\right\}, \qquad  \forall l=1\ldots \bar l.
	$$
\end{proof}

We now make two remarks about Theorem \ref{mainTheo2}.
First, it follows from its statements a) and b)
that the ACG iteration-complexity of  the 
$\theta$-IPAAL method expressed only in terms of
the tolerance pair $(\hat \rho,\hat \eta)$ is 
${\cal O}([1/(\hat \eta \hat \rho^2)]\log(1/\hat\eta))$
under the assumption that  conditions {\bf (A1)--(A4)} hold and
can actually be sharpened to
 ${\cal O}([1/(\sqrt{\hat \eta} \hat \rho^2)]\log(1/\hat\eta))$ if  conditions {\bf (B1)--(B3)} also hold.
Second, if the initial penalty parameter $c_1$ is
chosen so as to satisfy
$c_1= 2 \bar c+ \Theta(L/\|A\|^2)$ and the
logarithms are  ignored in
the complexity bounds \eqref{bound001} and \eqref{eq:ttt1}, then
these bounds reduce to
$$
		{\mathcal O}\left(\left\lceil\frac{m^2R_0}{\hat\rho^2}\right\rceil\left[ \|A\| \sqrt{\max\left\{\frac{\bar c}{m},\frac{m R_0}{\hat\eta^2}\right\}}	
		+ \sqrt{\frac{L}{m}+1}\right]\right)
		$$
		and
		$$
		{\mathcal O}\left(\left\lceil\frac{m^2R_0}{\hat\rho^2}\right\rceil\left[\|A\|\sqrt{\frac{1}{m}\max\left\{\bar c,\frac{ N_0}{\hat \eta}\right\}}	
		+ \sqrt{\frac{L}{m}+1}\right]\right),
		$$
		respectively, in view of the fact that by \eqref{definition of sigma} the stepsize $\lam$ satisfies
		$\lam={\Theta}(1/m)$.
		Third, when \eqref{optl0} has an optimal solution,
		then the above bounds can be majorized
		by ones involving the upper bound
on $R_0$ described in \eqref{ineq:majorizing-Rlambda}.

	\section{Proofs of Theorems~\ref{mainprop1} and \ref{mainprop2}}\label{sec:Technical-Results}
	
	This section contains the proofs of Theorems~\ref{mainprop1} and \ref{mainprop2}. It is divided into two subsections. The proof of
	Theorem \ref{mainprop1} is given in the first subsection while
	the one of Theorem \ref{mainprop2} is given in the second subsection.

	\subsection{Proof of  Theorem~\ref{mainprop1}}\label{TechSubsec1}
	This subsection is devoted to the proof of  Theorem~\ref{mainprop1}.

We start by stating a result which describes
some relations which are frequently
used in our analysis.

\begin{lemma} Let $\{(z_k,v_k,\lm_k)\}$ be generated by the static $\theta$-IPAAL method and, for every $k\geq 1$,  define
	\begin{equation}\label{def:Delta}
	\Delta \lm_k:=\lm_k-\lm_{k-1},\quad  \Delta z_k:=z_k-z_{k-1},\quad r_k:= \Delta z_k - v_k.
		\end{equation}
	Then, for every $k\geq 1$, the following relations hold
	\begin{equation}\label{eq:deltalambda}
	\Delta \lm_{k+1} = (1-\theta) \Delta \lm_{k} + \pen A\Delta z_{k+1},
	\end{equation}	
	\begin{equation}\label{ineq:deltazkrk}
	\|\Delta z_k\| \leq (1+\sigma)\|r_k\|,\quad \|r_k\|\leq \frac{1}{1-\sigma}\|\Delta z_k\|,
	\end{equation}
 where $\sigma$ is as in \eqref{definition of sigma}.
	\end{lemma}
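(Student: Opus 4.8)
The three relations are straightforward consequences of the update rules and the stopping criterion for the ACG subroutine, so the proof is really a bookkeeping exercise. For \eqref{eq:deltalambda}, the plan is to take the multiplier update \eqref{def:multiplier} at iterations $k$ and $k+1$, namely $\lm_k = (1-\theta)\lm_{k-1} + c(Az_k - b)$ and $\lm_{k+1} = (1-\theta)\lm_k + c(Az_{k+1}-b)$, and subtract them. Subtracting gives $\Delta\lm_{k+1} = (1-\theta)(\lm_k - \lm_{k-1}) + c(Az_{k+1} - Az_k) = (1-\theta)\Delta\lm_k + cA\Delta z_{k+1}$, using $A\Delta z_{k+1} = Az_{k+1} - Az_k$ and the definition of $\Delta z_{k+1}$ in \eqref{def:Delta}. (One should note the convention $\lm_0 = p_0 = 0$ from step (0), but for $k\ge 1$ this identity only involves $\lm_{k-1},\lm_k,\lm_{k+1}$ with indices $\ge 0$, so no special case is needed.) I would also point out that the same computation works at $k=0$ if one reads $Az_0 - b$ appropriately, but since the lemma is stated for $k\ge 1$ this is not necessary.

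For the two bounds in \eqref{ineq:deltazkrk}, the key input is the inequality in the ACG stopping criterion \eqref{GIPPinc_ineq}, which reads $\|v_k\|^2 + 2\varepsilon_k \le \sigma^2\|v_k + z_{k-1} - z_k\|^2$. Since $\varepsilon_k \ge 0$, this immediately gives $\|v_k\| \le \sigma\|v_k + z_{k-1} - z_k\|$. Now observe from \eqref{def:Delta} that $v_k + z_{k-1} - z_k = v_k - \Delta z_k = -r_k$, so the criterion becomes simply $\|v_k\| \le \sigma\|r_k\|$. The plan is then to combine this with the triangle inequality applied to $\Delta z_k = r_k + v_k$: on one hand $\|\Delta z_k\| \le \|r_k\| + \|v_k\| \le (1+\sigma)\|r_k\|$, which is the first inequality; on the other hand $\|r_k\| = \|\Delta z_k - v_k\| \le \|\Delta z_k\| + \|v_k\| \le \|\Delta z_k\| + \sigma\|r_k\|$, and rearranging (using $\sigma < 1$, which holds since $\sigma \le \sigma_\theta \le 1/2$ by the remark following \eqref{def:sigmatheta} and by \eqref{definition of sigma}) yields $\|r_k\| \le \|\Delta z_k\|/(1-\sigma)$, the second inequality.

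The proof has essentially no obstacle; the only thing to be careful about is the sign identification $v_k + z_{k-1} - z_k = -r_k$, which makes the norm in \eqref{GIPPinc_ineq} equal to $\|r_k\|$, and the fact that $\sigma < 1$ so that division by $1-\sigma$ is legitimate and produces a positive constant. I would present the argument in the order: (i) derive \eqref{eq:deltalambda} by subtracting consecutive multiplier updates; (ii) record that \eqref{GIPPinc_ineq} together with $\varepsilon_k\ge 0$ gives $\|v_k\|\le\sigma\|r_k\|$; (iii) apply the triangle inequality twice to obtain the two estimates in \eqref{ineq:deltazkrk}.
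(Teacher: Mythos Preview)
Your proposal is correct and follows essentially the same approach as the paper: subtract consecutive instances of the multiplier update \eqref{def:multiplier} to get \eqref{eq:deltalambda}, then use the inequality in \eqref{GIPPinc_ineq} together with $\varepsilon_k\ge 0$ to obtain $\|v_k\|\le \sigma\|r_k\|$ and finish with two triangle-inequality estimates. The only detail worth noting is that you correctly invoke $\sigma<1$ to justify the rearrangement in the second bound, which the paper leaves implicit.
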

	\begin{proof}
	The relation in \eqref{eq:deltalambda} follows immediately  from \eqref{def:multiplier}  and  the first two relations in \eqref{def:Delta}. 
	Now, in view of  the last two relations in \eqref{def:Delta},  it is easy to see that the inequality in \eqref{GIPPinc_ineq} implies that
	$\|v_k\|\leq \sigma\|r_k\|$. It follows from  the latter inequality, the last relation in \eqref{def:Delta},  and  the triangle inequality that 
	$$\|\Delta z_k\|\leq \|\Delta z_k-v_k\|+\|v_k\|\leq (1+\sigma)\|r_k\|,\qquad
	\|r_k\|\leq \|\Delta z_k\|+\|v_k\|\leq\|\Delta z_k\|+\sigma \|r_k\|.$$  
The above  inequalities clearly imply that the ones in  \eqref{ineq:deltazkrk} hold.\end{proof}
It follows from the inequalities  in \eqref{ineq:deltazkrk} that the sequence of displacements $\{\Delta z_k\}$ goes to zero if and only if the residual sequence $\{r_k\}$ goes to zero.  
			
The next lemmas describe how the sequence $\{(z_k,\lm_k)\}$ generated by the static $\theta$-IPAAL method  affects  the value of  ${\cal L}^\theta_\pen(z,\lm)$ defined in \eqref{lagrangian2}.
	\begin{lemma}\label{aux:lemma-auglagb}
	Let $\{(z_k,v_k,\lm_k)\}$ be generated by the static $\theta$-IPAAL method and let $\{\Delta \lm_k\}$, $\{\Delta z_k\}$, and $\{r_k\}$ be as in \eqref{def:Delta}.
		Then,  for every $k\geq 1$, the following relations  hold:
		\begin{align}
		{\cal L}^\theta_\pen(z_k,\lm_k)-{\cal L}^\theta_\pen(z_k,\lm_{k-1}) &=\frac{(1-\theta)(2-\theta)}{2\pen} \|\Delta\lm_k\|^2+\frac{(1-\theta)\theta}{2\pen}\left(\|\lm_k\|^2 - \|\lm_{k-1}\|^2 \right),\label{auglagDecreasing1}\\[3mm] 
		{\cal L}^\theta_\pen(z_k,\lm_{k})-{\cal L}^\theta_\pen(z_{k-1},\lm_{k-1})&\leq
		-\frac{1-{\sigma}^2}{2\lambda}\|r_k\|^2+\frac{(1-\theta)(2-\theta)}{2\pen} \|\Delta\lm_k\|^2+\frac{(1-\theta)\theta}{2\pen}\left(\|\lm_k\|^2 - \|\lm_{k-1}\|^2 \right).\label{auglagDecreasing2}
		\end{align}
	\end{lemma}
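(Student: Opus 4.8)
The plan is to prove the two identities/inequalities in turn, the first by a direct algebraic manipulation of the definition \eqref{lagrangian2}, and the second by combining the first with a ``prox-descent'' estimate coming from the approximate optimality condition \eqref{GIPPinc_ineq}. For \eqref{auglagDecreasing1}, I would start from
\[
{\cal L}^\theta_\pen(z_k,\lm_k)-{\cal L}^\theta_\pen(z_k,\lm_{k-1})
= (1-\theta)\inner{\lm_k-\lm_{k-1}}{Az_k-b}
= (1-\theta)\inner{\Delta\lm_k}{Az_k-b},
\]
since the $f+h$ and $\frac{c}{2}\|Az_k-b\|^2$ terms do not depend on $\lm$. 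Next I use the multiplier update \eqref{def:multiplier}, which gives $\pen(Az_k-b)=\Delta\lm_k+\theta\lm_{k-1}$, i.e. $Az_k-b = \frac1\pen(\Delta\lm_k+\theta\lm_{k-1})$. Substituting,
\[
{\cal L}^\theta_\pen(z_k,\lm_k)-{\cal L}^\theta_\pen(z_k,\lm_{k-1})
= \frac{1-\theta}{\pen}\left(\|\Delta\lm_k\|^2 + \theta\inner{\Delta\lm_k}{\lm_{k-1}}\right).
\]
Then I would rewrite $\theta\inner{\Delta\lm_k}{\lm_{k-1}}$ using the polarization identity $2\inner{\Delta\lm_k}{\lm_{k-1}} = \|\lm_k\|^2-\|\lm_{k-1}\|^2-\|\Delta\lm_k\|^2$ (valid since $\lm_k=\lm_{k-1}+\Delta\lm_k$), which yields
\[
\frac{1-\theta}{\pen}\|\Delta\lm_k\|^2 + \frac{(1-\theta)\theta}{2\pen}\left(\|\lm_k\|^2-\|\lm_{k-1}\|^2-\|\Delta\lm_k\|^2\right),
\]
and collecting the $\|\Delta\lm_k\|^2$ coefficients gives $\frac{(1-\theta)(2-\theta)}{2\pen}$, matching \eqref{auglagDecreasing1}.

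For \eqref{auglagDecreasing2}, by the telescoping
\[
{\cal L}^\theta_\pen(z_k,\lm_k)-{\cal L}^\theta_\pen(z_{k-1},\lm_{k-1})
= \big[{\cal L}^\theta_\pen(z_k,\lm_k)-{\cal L}^\theta_\pen(z_k,\lm_{k-1})\big]
+ \big[{\cal L}^\theta_\pen(z_k,\lm_{k-1})-{\cal L}^\theta_\pen(z_{k-1},\lm_{k-1})\big],
\]
it suffices, in view of \eqref{auglagDecreasing1}, to show
\[
{\cal L}^\theta_\pen(z_k,\lm_{k-1})-{\cal L}^\theta_\pen(z_{k-1},\lm_{k-1}) \le -\frac{1-\sigma^2}{2\lambda}\|r_k\|^2.
\]
This is where the ACG output enters. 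The inclusion in \eqref{GIPPinc_ineq} says $v_k \in \partial_{\varepsilon_k}\big(\lam{\cal L}^\theta_\pen(\cdot;\lm_{k-1}) + \tfrac12\|\cdot-z_{k-1}\|^2\big)(z_k)$; applying the $\varepsilon$-subgradient inequality \eqref{def:epsSubdiff} at the point $z_{k-1}$ gives
\[
\lam{\cal L}^\theta_\pen(z_{k-1};\lm_{k-1}) + \tfrac12\|z_{k-1}-z_{k-1}\|^2
\ge \lam{\cal L}^\theta_\pen(z_k;\lm_{k-1}) + \tfrac12\|z_k-z_{k-1}\|^2 + \inner{v_k}{z_{k-1}-z_k} - \varepsilon_k,
\]
i.e.
\[
\lam\big[{\cal L}^\theta_\pen(z_k;\lm_{k-1}) - {\cal L}^\theta_\pen(z_{k-1};\lm_{k-1})\big]
\le -\tfrac12\|\Delta z_k\|^2 + \inner{v_k}{\Delta z_k} + \varepsilon_k.
\]
Using $r_k=\Delta z_k-v_k$, I rewrite $-\tfrac12\|\Delta z_k\|^2+\inner{v_k}{\Delta z_k} = -\tfrac12\|r_k\|^2+\tfrac12\|v_k\|^2$, so the right-hand side becomes $-\tfrac12\|r_k\|^2+\tfrac12\|v_k\|^2+\varepsilon_k$. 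Finally, the inequality in \eqref{GIPPinc_ineq} gives $\|v_k\|^2+2\varepsilon_k \le \sigma^2\|v_k+z_{k-1}-z_k\|^2 = \sigma^2\|r_k\|^2$, so $\tfrac12\|v_k\|^2+\varepsilon_k \le \tfrac{\sigma^2}{2}\|r_k\|^2$, whence
\[
\lam\big[{\cal L}^\theta_\pen(z_k;\lm_{k-1}) - {\cal L}^\theta_\pen(z_{k-1};\lm_{k-1})\big] \le -\tfrac{1-\sigma^2}{2}\|r_k\|^2.
\]
Dividing by $\lam$ and combining with \eqref{auglagDecreasing1} through the telescoping above yields \eqref{auglagDecreasing2}.

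The computation is essentially routine; the only thing requiring care is the bookkeeping in step one — making sure the $\theta$-weighted linear term interacts correctly with the multiplier update \eqref{def:multiplier} so that the three $\|\Delta\lm_k\|^2$-type contributions combine into the stated coefficient $\tfrac{(1-\theta)(2-\theta)}{2\pen}$. I expect the main (very mild) obstacle to be getting all the $\tfrac1\pen$ versus $\tfrac1{2\pen}$ factors and the polarization identity signs right; there is no conceptual difficulty, since the descent inequality is the standard consequence of approximate optimality of a prox-subproblem and everything else is exact algebra in the definition \eqref{lagrangian2}.
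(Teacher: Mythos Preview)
Your proposal is correct and follows essentially the same route as the paper's proof: the paper also computes ${\cal L}^\theta_\pen(z_k,\lm_k)-{\cal L}^\theta_\pen(z_k,\lm_{k-1})=(1-\theta)\inner{\Delta\lm_k}{Az_k-b}$, substitutes $Az_k-b=(\Delta\lm_k+\theta\lm_{k-1})/\pen$ via \eqref{def:multiplier}, and then uses the polarization identity to obtain \eqref{auglagDecreasing1}; for \eqref{auglagDecreasing2} it likewise applies the $\varepsilon$-subgradient inequality from \eqref{GIPPinc_ineq} at $z_{k-1}$, rewrites $-\tfrac12\|\Delta z_k\|^2+\inner{v_k}{\Delta z_k}=-\tfrac12\|r_k\|^2+\tfrac12\|v_k\|^2$, invokes the error bound $\|v_k\|^2+2\varepsilon_k\le\sigma^2\|r_k\|^2$, and adds the result to \eqref{auglagDecreasing1}.
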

	\begin{proof} In view of  the definition of $ {\cal L}^\theta_\pen$ in \eqref{lagrangian2}, relation \eqref{def:multiplier} and the first relation in \eqref{def:Delta}, we obtain
		\begin{align*} 
		&{\cal L}^\theta_\pen(z_k,\lm_k)-{\cal L}^\theta_\pen(z_k,\lm_{k-1})
		=(1-\theta)\left\langle\Delta\lm_k,Az_k-b\right\rangle
		=(1-\theta)\left\langle\Delta\lm_k,\frac{\lm_k-(1-\theta)\lm_{k-1}}{c}\right\rangle\\
		&=\frac{1-\theta}{c}\left[\|\Delta\lm_k\|^2+\theta\Inner{\Delta\lm_k}{\lm_{k-1}}\right]=\frac{1-\theta}{c}\left[\|\Delta\lm_k\|^2+\frac{\theta}{2}\left([\|\Delta\lm_k+\lm_{k-1}\|^2-\|\Delta\lm_k\|^2-\|\lm_{k-1}\|^2\right)\right]
		\end{align*}
which immediately implies \eqref{auglagDecreasing1}
upon using the identity $\lm_k=\Delta \lm_k+\lm_{k-1}$.
	Now,  it follows from  \eqref{GIPPinc_ineq}, \eqref{def:Delta}, and the Cauchy-Schwarz inequality, that
		\begin{align*}
		& \lambda{\cal L}^\theta_\pen(z_k,\lm_{k-1}) - \lambda{\cal L}^\theta_\pen(z_{k-1},\lm_{k-1})
		\leq -\frac{1}{2} \|z_{k}-z_{k-1}\|^2 +
		\inner{v_{k}}{z_k-z_{k-1}} +\varepsilon_k \\
		&= -\frac{1}{2} \|\Delta z_k - v_k \|^2 + \left( \frac{\|v_k\|^2}2 + \varepsilon_k \right)  \leq-\frac{1-\sigma^2}{2} \|\Delta z_k-v_k\|^2 = -\frac{1-\sigma^2}{2} \|r_k\|^2.
		\end{align*}
		The inequality in \eqref{auglagDecreasing2}  
		follows immediately by adding  the previous inequality and \eqref{auglagDecreasing1}. \end{proof}

The next result shows that the term $\|\Delta\lm_k\|^2$ in  \eqref{auglagDecreasing2} is majorized  by  $\|r_k\|^2$  and some summable terms. Its proof is postponed to Section~\ref{Sec: proofAuxlemma}.

	\begin{lemma}\label{pr:aux-new2}  Let $\{\Delta \lm_k\}$, $\{\Delta z_k\}$, and $\{r_k\}$ be as in  \eqref{def:Delta}. Then,
	for every $k \ge 2$, we have
	\begin{align*}
			\frac{(2-\theta)}{2c} \|\Delta \lm_{k}\|^2\leq& \frac{2}{\lambda\theta }\left[ \frac{2\tau_\theta(1+\sigma)^2}{\tau_\theta+1} + 2\sigma(1+\sigma) +\frac{(\tau_\theta+1)\sigma^2}{\tau_\theta}\right] \|r_{k}\|^2+\frac{(1-\theta)^2}{\theta c}\left[ \|\Delta \lm_{k-1}\|^2-\|\Delta \lm_{k}\|^2 \right]\\[3mm]
			&+\frac{1}{\lambda\theta }\left[\sigma(1+\sigma) +\frac{(\tau_\theta+1)\sigma^2}{\tau_\theta}\right]\left[\|r_{k-1}\|^2-\|r_{k}\|^2\right]+\frac{1}{\lambda \theta}\left[\|\Delta z_{k-1}\|^2-\|\Delta z_{k}\|^2\right].
			\end{align*}		
		\end{lemma}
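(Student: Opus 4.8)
The plan is to bound $\|\Delta\lm_k\|^2$ by using the recursion \eqref{eq:deltalambda} together with the optimality-type information encoded in the approximation criterion \eqref{GIPPinc_ineq}. First I would write $\pen A\Delta z_k = \Delta\lm_k - (1-\theta)\Delta\lm_{k-1}$ from \eqref{eq:deltalambda} and observe that $\Delta\lm_k$ lies in $\mathrm{Im}(A)$, so estimating $\|\Delta\lm_k\|$ amounts to estimating $\|\pen A\Delta z_k + (1-\theta)\Delta\lm_{k-1}\|$. The natural way to get a handle on $\pen A\Delta z_k$ (equivalently on $\lm_k$ via \eqref{def:multiplier}) is to extract an inclusion for $v_k$ from \eqref{GIPPinc_ineq}: writing the $\varepsilon_k$-subdifferential condition out via \eqref{def:epsSubdiff} and the definition \eqref{lagrangian2} of ${\cal L}^\theta_\pen$, one gets that $v_k + (z_{k-1}-z_k) - \lambda[(1-\theta)\lm_{k-1}A^* \text{-term} + \pen A^*(Az_k-b)]$ is (approximately) a subgradient of $\lambda(f+h)$ at $z_k$. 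Comparing this inclusion at consecutive indices $k$ and $k-1$ and using the $m$-weak-convexity \eqref{lowerCurvature-m} (equivalently, that $f(\cdot)+m\|\cdot\|^2/2$ is convex) together with monotonicity of $\partial h$ should produce, after pairing with $\Delta z_k$, an inequality relating $\langle \Delta\lm_k - (1-\theta)\Delta\lm_{k-1}, A\Delta z_k\rangle$ (which is $\|\Delta\lm_k\|^2/\pen$ up to the $(1-\theta)$ cross term) to quadratic terms in $r_k,r_{k-1},\Delta z_k,\Delta z_{k-1},v_k,v_{k-1}$.

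Concretely, the key steps in order would be: (i) derive from \eqref{GIPPinc_ineq} the explicit inclusion $\tfrac1\lambda[v_k+z_{k-1}-z_k] - \nabla f(z_k) - A^*\lm_k \in \partial_{\varepsilon_k/\lambda} h(z_k)$ for each $k$ (and the analogue at $k-1$); (ii) subtract the two inclusions, apply the $\varepsilon$-subgradient inequality for $h$ in both directions and combine with $f(z_{k-1})-\ell_f(z_{k-1};z_k) \ge -\tfrac m2\|\Delta z_k\|^2$ and its symmetric counterpart, to obtain a bound on $\langle \lm_k-\lm_{k-1}, A\Delta z_k\rangle = \langle \Delta\lm_k, A\Delta z_k\rangle$ in terms of $\tfrac1\lambda\langle v_k - v_{k-1} + z_{k-1}-z_k - (z_{k-2}-z_{k-1}), \Delta z_k\rangle$ plus an $m\|\Delta z_k\|^2$-type term and the $\varepsilon$'s; (iii) substitute $\langle\Delta\lm_k,A\Delta z_k\rangle = \tfrac1\pen\|\Delta\lm_k\|^2 - \tfrac{1-\theta}\pen\langle\Delta\lm_{k-1},\Delta\lm_k\rangle$ from \eqref{eq:deltalambda}, handle the cross term with Young's inequality (using a split tuned to produce exactly the telescoping coefficient $(1-\theta)^2/(\theta c)$), and then (iv) bound every remaining inner product by quadratic terms in $r_k, r_{k-1}, \Delta z_k, \Delta z_{k-1}$ using \eqref{ineq:deltazkrk}, $\|v_k\|\le\sigma\|r_k\|$, $\varepsilon_k \le \tfrac{\sigma^2}{2}\|r_k\|^2$, $\lambda = \tau_\theta/m$, and Cauchy–Schwarz, arranging the split of each cross term so that what is not absorbed into $\|r_k\|^2$ appears as a telescoping difference $\|r_{k-1}\|^2-\|r_k\|^2$ or $\|\Delta z_{k-1}\|^2-\|\Delta z_k\|^2$. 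Collecting coefficients should reproduce the stated bound.

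The main obstacle, as I see it, is step (iv): getting the \emph{exact} constants $\tfrac{2\tau_\theta(1+\sigma)^2}{\tau_\theta+1} + 2\sigma(1+\sigma) + \tfrac{(\tau_\theta+1)\sigma^2}{\tau_\theta}$ on $\|r_k\|^2$, $\sigma(1+\sigma)+\tfrac{(\tau_\theta+1)\sigma^2}{\tau_\theta}$ on the $\|r\|^2$-telescoping term, and coefficient $1$ on the $\|\Delta z\|^2$-telescoping term requires choosing each Young's-inequality weight very deliberately rather than generously — in particular the $\tfrac{1}{\tau_\theta+1}$ versus $\tau_\theta$ split appearing in the $\|r_k\|^2$ coefficient strongly suggests that the cross term $\langle v_k - v_{k-1}, \Delta z_k\rangle$ (or a regrouping thereof into a "current" and a "previous" piece) is split with weights $\tau_\theta+1$ and $1/(\tau_\theta+1)$, and a companion term with weights involving $\tau_\theta$. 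So the real work is bookkeeping: write the regrouped identity $z_{k-1}-z_k - (z_{k-2}-z_{k-1}) = -\Delta z_k - \Delta z_{k-1}$, expand, and track each of roughly half a dozen quadratic contributions through a weighted AM–GM with the weights reverse-engineered from the target. I would not expect any conceptual difficulty beyond this, since the structure (telescoping Lyapunov-type estimate built on a GIPP/HPE-style subproblem criterion) is standard; the delicacy is purely in matching constants, and since the lemma's proof is explicitly deferred to Section~\ref{Sec: proofAuxlemma}, I would present the above as the skeleton and carry out the constant-matching there.
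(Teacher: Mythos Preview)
Your high-level strategy---compare optimality conditions at consecutive iterates and convert via \eqref{eq:deltalambda}---is the right one, but step~(i) contains a real gap. The inclusion
\[
\tfrac{1}{\lambda}[v_k+z_{k-1}-z_k] - \nabla f(z_k) - A^*\lm_k \in \partial_{\varepsilon_k/\lambda} h(z_k)
\]
is not valid: for convex $G,H$ one has $\partial_\varepsilon(G+H)(z)=\bigcup_{\varepsilon_1+\varepsilon_2=\varepsilon}\bigl[\partial_{\varepsilon_1}G(z)+\partial_{\varepsilon_2}H(z)\bigr]$, so from $v_k\in\partial_{\varepsilon_k}(G+\lambda h)(z_k)$ you only learn that $v_k=g'+\lambda h'$ with $g'\in\partial_{\varepsilon_1}G(z_k)$ for \emph{some} $\varepsilon_1\in[0,\varepsilon_k]$; there is no reason $g'=\nabla G(z_k)$. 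One can bound $\|g'-\nabla G(z_k)\|\le\sqrt{2L_G\varepsilon_1}$ with $L_G=\lambda L_c+1$, but this introduces a $c$-dependent error that would wreck the downstream constants. Consequently the subsequent ``approximate monotonicity of $\partial h$'' in step~(ii) is not available, and the chain of inequalities does not start.

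The paper avoids this entirely by never decomposing the $\varepsilon$-subdifferential. It works with the full function $\psi_k:=\lambda{\cal L}^\theta_\pen(\cdot;\lm_{k-1})+\tfrac12\|\cdot-(z_{k-1}+v_k)\|^2$ (note the shifted center, which absorbs $v_k$ and turns the inclusion into $0\in\partial_{\varepsilon_k}\psi_k(z_k)$), observes that $\psi_{k+1}=\psi_k+a_k$ for an \emph{affine} $a_k$, so $-\nabla a_k\in\partial_{\varepsilon_{k+1}}\psi_k(z_{k+1})$, and then applies Lemma~\ref{lem:proxinclusion} twice to $\psi_k$, which is strongly convex in the seminorm $\|\cdot\|_Q$ with $Q=(1-\lambda m)I+c\lambda A^*A$. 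The $A^*A$ block of $Q$ is what produces the crucial term $\tfrac{c\lambda}{1+\tau_\theta}\|A\Delta z_{k+1}\|^2$ on the left of the key intermediate quantity $\Theta_k$ in \eqref{def:Thetak}; without strong convexity you would not see it. Finally, the $(\tau_\theta+1)/\tau_\theta$ and $1/(1+\tau_\theta)$ factors you tried to reverse-engineer as Young's-inequality weights are in fact the inflation factor $(1+\tau^{-1})$ on $\varepsilon$ and the retention factor $1/(1+\tau)$ on strong convexity coming from Lemma~\ref{lem:proxinclusion}, with the choice $\tau=\tau_\theta$.
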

Next result is a first step in order to show that the residual sequence $\{r_k\}$ is controlled by  a certain decreasing sequence  associated to $\{{\cal L}^\theta_\pen(z_k,\lm_k)\}$.  This fact is crucial to establish the iteration-complexity bounds for the $\theta$-IPAAL method. 
	\begin{lemma} \label{lem:auxdecL2} Define
	\begin{equation}\label{def:BCtheta}
		C_\theta:=    \frac{1-{\sigma}^2}{2}-\frac{2(1-\theta)}{\theta}
		\left[ \frac{2\tau_\theta(1+\sigma)^2}{\tau_\theta+1} + 2\sigma(1+\sigma) +\frac{(\tau_\theta+1)\sigma^2}{\tau_\theta}\right]
		\end{equation}
        and 
		\begin{equation}\label{def:Dtheta}
		\tilde C_\theta:=\frac{1-\theta}{\theta}\left[\sigma(1+\sigma)+\frac{(\tau_\theta+1)\sigma^2}{\tau_\theta}\right].
		\end{equation}
	Then, $C_\theta \ge 1/8$, $\tilde C_\theta\geq 0$ and, for every $k\geq 2$, we have
		\begin{align*}
		{\cal L}^\theta_\pen&(z_k,\lm_{k})-{\cal L}^\theta_\pen(z_{k-1},\lm_{k-1})\leq
		-\frac{C_\theta}{\lambda}\|r_k\|^2+\frac{(1-\theta)\theta}{2\pen}\left(\|\lm_k\|^2 - \|\lm_{k-1}\|^2 \right)
		 \\[3mm]
		&+\frac{(1-\theta)^3}{\theta c}\left[ \|\Delta \lm_{k-1}\|^2-\|\Delta \lm_{k}\|^2 \right]  +\frac{\tilde C_\theta}{ \lambda}\left[\|r_{k-1}\|^2-\|r_k\|^2\right]
		+\frac{1-\theta}{\theta\lambda}\left[\|\Delta z_{k-1}\|^2-\|\Delta z_{k}\|^2\right].	\end{align*}
	\end{lemma}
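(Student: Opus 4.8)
The plan is to combine Lemma~\ref{aux:lemma-auglagb} (specifically inequality \eqref{auglagDecreasing2}) with Lemma~\ref{pr:aux-new2}, which bounds the problematic term $(2-\theta)\|\Delta\lm_k\|^2/(2c)$ appearing in \eqref{auglagDecreasing2}. First I would substitute the bound from Lemma~\ref{pr:aux-new2} into the right-hand side of \eqref{auglagDecreasing2}. This yields
\[
{\cal L}^\theta_\pen(z_k,\lm_k)-{\cal L}^\theta_\pen(z_{k-1},\lm_{k-1}) \le -\frac{1-\sigma^2}{2\lambda}\|r_k\|^2 + \frac{(1-\theta)\theta}{2c}\left(\|\lm_k\|^2-\|\lm_{k-1}\|^2\right) + (1-\theta)\left[\text{terms from Lemma~\ref{pr:aux-new2}}\right].
\]
The $\|r_k\|^2$ coefficients then combine: the $-\frac{1-\sigma^2}{2\lambda}$ from \eqref{auglagDecreasing2} and the $+\frac{2(1-\theta)}{\lambda\theta}[\cdots]\|r_k\|^2$ from multiplying the first bracket of Lemma~\ref{pr:aux-new2} by $(1-\theta)$ merge into exactly $-C_\theta/\lambda$ by the definition \eqref{def:BCtheta}. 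Similarly the telescoping-style differences $[\|\Delta\lm_{k-1}\|^2-\|\Delta\lm_k\|^2]$, $[\|r_{k-1}\|^2-\|r_k\|^2]$, and $[\|\Delta z_{k-1}\|^2-\|\Delta z_k\|^2]$ pick up their stated coefficients $\frac{(1-\theta)^3}{\theta c}$, $\frac{\tilde C_\theta}{\lambda}$ (matching \eqref{def:Dtheta} after multiplying by $(1-\theta)$), and $\frac{1-\theta}{\theta\lambda}$ respectively. So the displayed inequality in the lemma follows by bookkeeping.

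Next I would verify the two scalar inequalities $C_\theta \ge 1/8$ and $\tilde C_\theta \ge 0$. The latter is immediate since every term in \eqref{def:Dtheta} is nonnegative (recall $\theta \in (0,1]$, $\tau_\theta > 0$, $\sigma > 0$). For $C_\theta \ge 1/8$ I would use the fact, noted just after \eqref{def:sigmatheta}, that $\sigma \le \sigma_\theta \le 1/2$, together with the defining equation \eqref{def:sigmatheta} for $\sigma_\theta$. Rewriting \eqref{def:BCtheta}, since $\sigma \le \sigma_\theta$ and each bracketed term is increasing in $\sigma$, it suffices to prove $C_\theta \ge 1/8$ at $\sigma = \sigma_\theta$; then I would show that this reduces, after using $\frac{2\tau_\theta(1+\sigma)^2}{\tau_\theta+1} + 2\sigma(1+\sigma) \le \left(\frac{3\tau_\theta + 1}{\tau_\theta}\right)\cdot(\text{something})$-type estimates, precisely to the inequality obtained by plugging $\sigma_\theta$ into \eqref{def:sigmatheta}. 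More concretely, I expect that $\frac{1-\sigma^2}{2} - \frac{2(1-\theta)}{\theta}[\cdots] - \frac18$ equals (a positive multiple of) the left-hand side of \eqref{def:sigmatheta} evaluated at $\sigma = \sigma_\theta$, which is zero, so the bound holds with equality at the extreme and strictly for smaller $\sigma$. I would also need the elementary bounds $1-\sigma^2 \ge 3/4$ when $\sigma \le 1/2$, which is where the $3/4$ coefficient in \eqref{def:sigmatheta} comes from.

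The main obstacle I anticipate is the second part — confirming $C_\theta \ge 1/8$ by matching \eqref{def:BCtheta} against the quadratic \eqref{def:sigmatheta}. The coefficients in \eqref{def:sigmatheta}, namely $\frac34 + \frac{2(1-\theta)(3\tau_\theta+1)}{\theta\tau_\theta}$ on $\sigma^2$ and $\frac{8-7\theta}{2\theta}$ on $\sigma$, must be shown to dominate the corresponding pieces that arise when expanding $\frac{2(1-\theta)}{\theta}\left[\frac{2\tau_\theta(1+\sigma)^2}{\tau_\theta+1} + 2\sigma(1+\sigma) + \frac{(\tau_\theta+1)\sigma^2}{\tau_\theta}\right]$ and collecting by powers of $\sigma$; the constant terms (the $\sigma$-independent part of that bracket contributes $\frac{2(1-\theta)}{\theta}\cdot\frac{2\tau_\theta}{\tau_\theta+1}$, which combines with $\frac{1-\sigma^2}{2}$'s constant $\frac12$) must leave at least $\frac18$. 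The bound $\frac{2\tau_\theta}{\tau_\theta+1} + 2\sigma + \frac{(\tau_\theta+1)\sigma^2}{\tau_\theta} \le (3\tau_\theta+1)/\tau_\theta$-style simplifications rely on $\tau_\theta \le 1/2$ and $\sigma \le 1/2$, both of which hold by \eqref{def:tau} and the remark after \eqref{def:sigmatheta}. Once the algebraic identification with \eqref{def:sigmatheta} is made, the inequality $C_\theta \ge 1/8$ is just the statement that the positive root $\sigma_\theta$ makes that quadratic vanish and the quadratic is decreasing to the left of its positive root near $0$; monotonicity of the bracketed expression in $\sigma$ then upgrades this to all $\sigma \le \sigma_\theta$. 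Everything else is routine substitution and telescoping.
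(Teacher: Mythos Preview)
Your proposal is correct and follows essentially the same approach as the paper: combine \eqref{auglagDecreasing2} with Lemma~\ref{pr:aux-new2} (multiplied by $(1-\theta)$) to obtain the main inequality, and then verify the scalar bounds $C_\theta \ge 1/8$ and $\tilde C_\theta \ge 0$ separately. One small correction: the key constant-term estimate $\frac{2(1-\theta)}{\theta}\cdot\frac{2\tau_\theta}{\tau_\theta+1}\le \frac14$ (which the paper isolates as \eqref{auxeq9900}) requires the \emph{specific} definition of $\tau_\theta$ in \eqref{def:tau}, not merely $\tau_\theta\le 1/2$; once that bound is in hand, the reduction to the quadratic \eqref{def:sigmatheta} goes through exactly as you outline.
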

	
	\begin{proof}
The proof that $C_\theta\geq 1/8$ is simple but tedious, and hence it
is given in Appendix~\ref{sec:proof of Ctheta is positive}. It is immediate to see that $\tilde C_\theta\geq 0$ in view of $\theta, \sigma, \tau_\theta \in (0,1]$.	
Now the last statement of the lemma follows immediately by combining the inequality in Lemma~\ref{pr:aux-new2} with  \eqref{auglagDecreasing2} and the definitions of $C_\theta$ and $\tilde C_\theta$.	\end{proof}
	
	The next result summarizes some useful relations  about  $\{{\cal L}^\theta_\pen(z_k,\lm_k)\}$. 

	\begin{lemma}\label{lem:T+eta_decrease} Let $C_\theta$ and $\tilde C_\theta$ be as in \eqref{def:BCtheta} and \eqref{def:Dtheta}, and let  $\eta_k$ be defined by   
		\begin{equation}\label{def:newetak} 
		\eta_k:=\frac{(1-\theta)^3}{\theta\pen}\|\Delta \lm_{k}\|^2
		+\frac{\tilde C_\theta}{\lambda}\|r_k\|^2
		+\frac{1-\theta}{\theta\lambda}\|\Delta z_{k}\|^2-\frac{(1-\theta)\theta}{2\pen}\|\lm_k\|^2- \phi^*_{\bar \pen},\quad \quad \forall k\geq 1.
		\end{equation}
		Then, the following statements hold:
		\begin{itemize}
			\item[a)] for every $k \ge 2$,
			\begin{align} \label{aux:eq:T+eta1}
			 &{\cal L}^\theta_\pen(z_k,\lm_k)   +\eta_k  +   \frac{C_\theta}
			{\lambda}\|r_k\|^2 \leq  {\cal L}^\theta_\pen(z_{k-1},\lm_{k-1})   +\eta_{k-1},\\[3mm]
			 &{\cal L}^\theta_\pen(z_k,\lm_k)   +\eta_k  
		+ \frac{C_\theta}{\lam} \sum_{i=2}^k\| r_i\|^2\label{IneqT1+eta1}
		\leq {\cal L}^\theta_\pen(z_{1},\lm_{1})   +\eta_{1} .
		\end{align}
		\item[b)]for every $k \ge 1$,
			\begin{equation}\label{aux:eq:T+eta00}
			  \frac{\pen}{4} \|Az_k-b\|^2 + \frac{(1-\theta)\theta}{4\pen}\|\lm_k\|^2
			+\phi_{\bar{\pen}}(z_k)- \phi^*_{\bar \pen} \leq {\cal L}^\theta_\pen(z_k,\lm_k) +\eta_k; 
			\end{equation}
	\end{itemize}
	\end{lemma}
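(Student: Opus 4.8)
The goal is to prove Lemma~\ref{lem:T+eta_decrease}. Part (a) is essentially bookkeeping built on top of Lemma~\ref{lem:auxdecL2}, while part (b) requires producing a lower bound for ${\cal L}^\theta_\pen(z_k,\lm_k)+\eta_k$ in terms of a feasibility term, a multiplier-norm term, and $\phi_{\bar\pen}(z_k)-\phi^*_{\bar\pen}$. I would handle (a) first.

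For the inequality \eqref{aux:eq:T+eta1}, I would simply rearrange the conclusion of Lemma~\ref{lem:auxdecL2}: moving the ``$\|\lm_k\|^2-\|\lm_{k-1}\|^2$'', ``$\|\Delta\lm_{k-1}\|^2-\|\Delta\lm_k\|^2$'', ``$\|r_{k-1}\|^2-\|r_k\|^2$'' and ``$\|\Delta z_{k-1}\|^2-\|\Delta z_k\|^2$'' telescoping differences to the left and recognizing, via the definition \eqref{def:newetak} of $\eta_k$, that the collected terms at index $k$ (resp. $k-1$) are exactly $\eta_k-(-\phi^*_{\bar\pen})$ shifted appropriately — note the $-\phi^*_{\bar\pen}$ constant cancels on both sides. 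One must be slightly careful with the coefficient on $\|\Delta\lm_k\|^2$: Lemma~\ref{lem:auxdecL2} produces $(1-\theta)^3/(\theta c)$ as the telescoping coefficient whereas the raw ${\cal L}^\theta$-difference already carries a $(1-\theta)(2-\theta)/(2c)\|\Delta\lm_k\|^2$; I would check that these have already been merged inside Lemma~\ref{lem:auxdecL2}'s statement (they have — the $(2-\theta)/(2c)$ term was absorbed in Lemma~\ref{pr:aux-new2}), so the only surviving $\Delta\lm$ contribution is the telescoping one, matching \eqref{def:newetak}. Then \eqref{IneqT1+eta1} follows by summing \eqref{aux:eq:T+eta1} over $i=2,\dots,k$ and telescoping the left-hand ${\cal L}^\theta_\pen+\eta$ terms.

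For part (b), I would start from the definition \eqref{lagrangian2} of ${\cal L}^\theta_\pen(z_k,\lm_k)=f(z_k)+h(z_k)+(1-\theta)\langle\lm_k,Az_k-b\rangle+\frac{c}2\|Az_k-b\|^2$ and add $\eta_k$ from \eqref{def:newetak}. The nonnegative terms $\frac{(1-\theta)^3}{\theta c}\|\Delta\lm_k\|^2+\frac{\tilde C_\theta}{\lambda}\|r_k\|^2+\frac{1-\theta}{\theta\lambda}\|\Delta z_k\|^2$ can be dropped for a lower bound. The delicate part is the cross term: I need $(1-\theta)\langle\lm_k,Az_k-b\rangle+\frac c2\|Az_k-b\|^2-\frac{(1-\theta)\theta}{2c}\|\lm_k\|^2 \ge \frac c4\|Az_k-b\|^2+\frac{(1-\theta)\theta}{4c}\|\lm_k\|^2-\frac{c}2\|Az_k-b\|^2$ or something of that flavor; more precisely, using $\langle a,b\rangle\ge -\frac{t}2\|a\|^2-\frac1{2t}\|b\|^2$ with a suitable $t$, plus the relation \eqref{def:multiplier} giving $Az_k-b=(\lm_k-(1-\theta)\lm_{k-1})/c$, I would bound the inner product from below and then use $\|\lm_{k-1}\|^2 \le $ (something involving $\|\lm_k\|^2$ and $\|\Delta\lm_k\|^2$) — here is exactly where the dropped $\|\Delta\lm_k\|^2$ term might actually be needed, so I would be careful not to discard it prematurely. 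Finally, adding back $\phi_{\bar\pen}(z_k)=f(z_k)+h(z_k)+\frac{\bar c}2\|Az_k-b\|^2$ and $-\phi_{\bar\pen}(z_k)$, i.e. writing $f(z_k)+h(z_k)=\phi_{\bar\pen}(z_k)-\frac{\bar c}2\|Az_k-b\|^2$, and using $c>2\bar c$ so that $\frac c2-\frac{\bar c}2 \ge \frac c4$, yields the claimed $\frac c4\|Az_k-b\|^2$ coefficient.

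**Main obstacle.** The routine risk is matching constants: the interplay between the $c/2$ penalty coefficient, the $\bar c/2$ hidden in $\phi_{\bar\pen}$, the assumption $c>2\bar c$, and the Young-inequality split of $(1-\theta)\langle\lm_k,Az_k-b\rangle$ all have to conspire to leave exactly $c/4$ on the feasibility term and $(1-\theta)\theta/(4c)$ on the multiplier term while the leftover negative pieces are absorbed by the terms in $\eta_k$ that I chose to keep. I expect the tightest point to be showing that after the Young split and the substitution $Az_k-b=\Delta\lm_k/c + \dots$, the residual $-\|\lm_{k-1}\|^2$-type contribution is dominated by the retained $\frac{(1-\theta)^3}{\theta c}\|\Delta\lm_k\|^2$ together with half of the $(1-\theta)\theta/(2c)\|\lm_k\|^2$ budget — this is the one inequality in the proof that is not pure rearrangement and where the specific form of $\eta_k$ in \eqref{def:newetak} was reverse-engineered to make things close.
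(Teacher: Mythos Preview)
Your proposal is correct and follows essentially the same route as the paper. Part (a) is exactly as you describe: rearrange the inequality of Lemma~\ref{lem:auxdecL2} using the definition \eqref{def:newetak} of $\eta_k$ so that the four telescoping differences are absorbed into $\eta_{k-1}-\eta_k$ (the constant $-\phi^*_{\bar\pen}$ cancels), and then sum over $i=2,\dots,k$ for \eqref{IneqT1+eta1}.

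For part (b) the paper's execution is a touch more direct than what you sketch. Instead of substituting $Az_k-b=(\lm_k-(1-\theta)\lm_{k-1})/c$ and then having to control a $\|\lm_{k-1}\|^2$ term, the paper uses the equivalent identity
\[
(1-\theta)\Delta\lm_k \;=\; c(Az_k-b)-\theta\lm_k
\]
(which follows from \eqref{def:multiplier}) to rewrite
\[
(1-\theta)\Bigl\langle\lm_k,\,Az_k-b-\tfrac{\theta\lm_k}{c}\Bigr\rangle
=\frac{(1-\theta)^2}{c}\langle\lm_k,\Delta\lm_k\rangle
\]
directly. One Young inequality with parameter $\theta/2$ then gives
\[
(1-\theta)\langle\lm_k,\Delta\lm_k\rangle
\;\ge\; -\frac{\theta}{4}\|\lm_k\|^2 - \frac{(1-\theta)^2}{\theta}\|\Delta\lm_k\|^2,
\]
and the negative $\|\Delta\lm_k\|^2$ contribution is exactly cancelled by the retained $\frac{(1-\theta)^3}{\theta c}\|\Delta\lm_k\|^2$ piece of $\eta_k$; the iterate $\lm_{k-1}$ never appears. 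Your intuition that the $\|\Delta\lm_k\|^2$ term in $\eta_k$ must not be dropped prematurely, and that its coefficient was reverse-engineered precisely for this cancellation, is exactly right. The remaining steps (drop the nonnegative $\frac{\tilde C_\theta}{\lambda}\|r_k\|^2$ and $\frac{1-\theta}{\theta\lambda}\|\Delta z_k\|^2$, write $f(z_k)+h(z_k)=\phi_{\bar\pen}(z_k)-\frac{\bar\pen}{2}\|Az_k-b\|^2$, and use $c>2\bar\pen$ to obtain the $c/4$ feasibility coefficient) are exactly as you propose.
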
	
	\begin{proof}
		a) The  inequality in \eqref{aux:eq:T+eta1}  follows immediately from Lemma~\ref{lem:auxdecL2}  and the definition of  $\eta_k$ given in \eqref{def:newetak}. The inequality in \eqref{IneqT1+eta1}  follows immediately from  \eqref{aux:eq:T+eta1}.
		
		b) From the definitions of  ${\cal L}^\theta_\pen$ and $\eta_k$ given in \eqref{lagrangian2} and \eqref{def:newetak}, respectively, and the fact that $\tilde C_\theta \geq 0$ (see Lemma~\ref{lem:auxdecL2}),  we obtain,	for every $k\geq 1$,			
		\begin{align*}
		&{\cal L}^\theta_\pen(z_k,\lm_k)  +\eta_k + \phi^*_{\bar \pen}
		\geq  	{\cal L}^\theta_\pen(z_k,\lm_k)+\frac{(1-\theta)^3}{\theta\pen}\|\Delta \lm_{k}\|^2
		+\frac{1-\theta}{\theta\lambda}\|\Delta z_{k}\|^2-\frac{(1-\theta)\theta}{2\pen}\|\lm_k\|^2\\[3mm]
		&\geq  (f+h)(z_k) + \frac{\pen}2 \|Az_k-b\|^2 + (1-\theta)\Inner{\lm_k}{Az_k-b-\frac{\theta\lm_k}{\pen}}+
		\frac{(1-\theta)\theta}{2\pen}\|\lm_k\|^2+\frac{(1-\theta)^3}{\theta\pen}\|\Delta \lm_{k}\|^2.
		\end{align*}
		Since, for every $k\geq 1$,  \eqref{def:multiplier}  implies that $(1-\theta)\Delta\lm_k =\pen (Az_k-b)-\theta\lm_k$, we conclude from the  above relations,  definition of $\phi_{\bar{\pen}}$ in \eqref{def:phic}, and the fact that  $\inner{u}{v} \geq -(1/2)(\|u\|^2/2+2\|v\|^2)$ for all $u,v\in \Re^n$,
		that
		\begin{align*}
		 &{\cal L}^\theta_\pen(z_k,\lm_k)  +\eta_k+ \phi^*_{\bar\pen} 	\ge   \phi_{\bar{\pen}}(z_k)+\frac{\pen-\bar{\pen}}2 \|A z_k-b\|^2 + \frac{1-\theta}{\pen}\left[\Inner{\lm_k}{(1-\theta)\Delta\lm_k} +\frac{\theta}{2}\|\lm_k\|^2+\frac{(1-\theta)^2}{\theta}\|\Delta \lm_{k}\|^2\right]\\
		&\geq  \phi_{\bar{\pen}}(z_k)+\frac{\pen}{4}\|Az_k-b\|^2 + \frac{1-\theta}{2\pen}\left[-\frac{\theta}{2}\|\lm_k\|^2 - \frac{2(1-\theta)^2}{\theta}\|\Delta \lm_k\|^2+\theta\|\lm_k\|^2
		+\frac{2(1-\theta)^2}{\theta}\|\Delta \lm_{k}\|^2\right],
		\end{align*}
where in the last inequality we also used  $c-\bar{c} > c-c/2=c/2$, in view of \eqref{assump:penaltyparameter}. The statement in b) easily follows. \end{proof}
	
	We now make a few comments about Lemma~\ref{lem:T+eta_decrease}.
	Its first statement shows that the $\theta$-AL function
	${\cal L}^\theta_c (z_k,\lm_k)$ plus
	the scalar $\eta_k$ defined in \eqref{def:newetak} works as a merit
	descent function for the static $\theta$-IPAAL method.
	Moreover, its second statement shows that both the
	feasibility gap $\|Az_k-b\|$ and the magnitude of
	the Lagrange multiplier $\|\lm_k\|$
	is well-controlled by this merit function.
	
	At a first  sight, it is natural to think that the left hand side of \eqref{IneqT1+eta1} grows linearly with $c$ but the following
	proposition shows that it is actually uniformly bounded with respect to $c$.
	This result plays an important role in showing that the $\theta$-IPAAL method finds a  $(\hat \rho,\hat \eta)$-approximate stationary point of \eqref{optl0}
	as in Definition~\ref{def:stationarypoint}, regardless
	of whether the initial point $z_0 \in \dom h$ is feasible or infeasible.



	\begin{proposition}\label{prop:constante0} Let $ c > 2\bar{\pen}$,  let  $(z_1,\lm_1)$ be  generated by the static $\theta$-IPAAL method, and consider $R_0$, $\kappa_\theta$, $r_1$, and $\eta_1$  as in \eqref{def:Rlamb},  \eqref{def:constante0}, \eqref{def:Delta}  and \eqref{def:newetak}, respectively. Then, the following inequalities hold:
			
			\begin{equation} \label{aux:ineqTz1p0}
			\pen\|Az_1-b\|^2 +\frac{1}{\lambda}\|r_1\|^2 \leq  \frac{4R_0}{\lambda};
			\end{equation}
			\begin{equation}\label{ineq:p1po}
			 {\cal L}^\theta_\pen(z_1,\lm_1)+\eta_1\leq \frac{\kappa_\theta R_0}{\lambda}.
			\end{equation}
\end{proposition}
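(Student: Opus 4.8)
The plan is to prove the two inequalities in Proposition~\ref{prop:constante0} separately, starting with \eqref{aux:ineqTz1p0} and then using it to bootstrap \eqref{ineq:p1po}.

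\textbf{Step 1: bounding the first iteration.} First I would analyze what happens during the very first outer iteration. Since $p_0=0$, the $\theta$-AL function at $(z_1,\lm_0)$ is just the quadratic penalty function ${\cal L}^1_c(z_1) = (f+h)(z_1) + (c/2)\|Az_1-b\|^2$ evaluated with penalty $c$ (because the linear term $(1-\theta)\langle p_0, Az_1-b\rangle$ vanishes). The approximation criterion \eqref{GIPPinc_ineq} with $k=1$ together with the definition of the $\varepsilon$-subdifferential gives, for any $z$ with $Az=b$,
\[
\lam {\cal L}^\theta_c(z_1;p_0) + \tfrac12\|z_1-z_0\|^2 \le \lam (f+h)(z) + \tfrac12\|z-z_0\|^2 + \langle v_1, z_1-z\rangle + \varepsilon_1.
\]
Taking $z \in {\mathcal F}$ and using $\phi^*_{\bar c} \le (f+h)(z)$ implicitly via the definition of $R_0$, I would move the $\langle v_1, z_1-z\rangle$ and $\varepsilon_1$ terms around using Cauchy--Schwarz and the quadratic bound $\|v_1\|^2 + 2\varepsilon_1 \le \sigma^2\|r_1\|^2$ from \eqref{GIPPinc_ineq} (with $r_1 = \Delta z_1 - v_1 = z_1 - z_0 - v_1$). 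Completing the square in the displacement terms, the left side contributes $(c/2)\|Az_1-b\|^2 + \tfrac12\|r_1\|^2$ up to the $\sigma^2$-slack, and after absorbing the slack (using $\sigma \le 1/2$) one arrives at an inequality of the form $\lam c\|Az_1-b\|^2 + \|r_1\|^2 \le 2(\lam[(f+h)(z)-\phi^*_{\bar c}] + \|z-z_0\|^2) \cdot(\text{const})$; taking the infimum over $z \in {\mathcal F}$ and choosing constants carefully yields \eqref{aux:ineqTz1p0}. This is essentially the standard QP-AIPP first-iteration estimate and should go through with the factor $4$ being generous enough to absorb the $\sigma$-slack.

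\textbf{Step 2: from \eqref{aux:ineqTz1p0} to \eqref{ineq:p1po}.} Next I would unpack the definition \eqref{def:newetak} of $\eta_1$ and the definition \eqref{lagrangian2} of ${\cal L}^\theta_c(z_1,\lm_1)$, and add them:
\[
{\cal L}^\theta_c(z_1,\lm_1) + \eta_1 = (f+h)(z_1) + (1-\theta)\langle \lm_1, Az_1-b\rangle + \tfrac c2\|Az_1-b\|^2 + \tfrac{(1-\theta)^3}{\theta c}\|\Delta\lm_1\|^2 + \tfrac{\tilde C_\theta}{\lam}\|r_1\|^2 + \tfrac{1-\theta}{\theta\lam}\|\Delta z_1\|^2 - \tfrac{(1-\theta)\theta}{2c}\|\lm_1\|^2 - \phi^*_{\bar c}.
\]
Since $p_0=0$, we have $\lm_1 = \Delta\lm_1 = c(Az_1-b)$, so $\|\lm_1\|^2 = \|\Delta\lm_1\|^2 = c^2\|Az_1-b\|^2$ and the linear cross term $(1-\theta)\langle\lm_1, Az_1-b\rangle = (1-\theta)c\|Az_1-b\|^2$. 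Substituting these, every term involving $\lm_1$ or $Az_1-b$ collapses into a single multiple of $c\|Az_1-b\|^2$; I would verify the coefficient is bounded by a modest constant (the dangerous $-(1-\theta)\theta/(2c)\cdot c^2 = -(1-\theta)\theta c/2$ term is negative, hence harmless, and the positive contributions $c/2 + (1-\theta)c + (1-\theta)^3 c/\theta$ are all $O(c)$ but get multiplied only by $\|Az_1-b\|^2$, which by \eqref{aux:ineqTz1p0} is $\le 4R_0/(\lam c)$, so their product is $O(R_0/\lam)$). Similarly $\|r_1\|^2 \le 4R_0$ and $\|\Delta z_1\|^2 = \|r_1+v_1\|^2 \le (1+\sigma)^2\|r_1\|^2$ via \eqref{ineq:deltazkrk}, both bounded by $O(R_0)$, so after dividing by $\lam$ these contribute $O(R_0/\lam)$. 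Finally $(f+h)(z_1) - \phi^*_{\bar c}$ must be controlled; here I would use that $\phi_{\bar c}(z_1) = (f+h)(z_1) + (\bar c/2)\|Az_1-b\|^2 \ge \phi^*_{\bar c}$, so $(f+h)(z_1) - \phi^*_{\bar c} \le (\bar c/2)\|Az_1-b\|^2 \le (c/4)\|Az_1-b\|^2 \le R_0/\lam$ using $\bar c < c/2$ and \eqref{aux:ineqTz1p0}. Collecting all the constants and comparing with $\kappa_\theta = 1 + 16(1-\theta)/(\theta\tau_\theta)$ should close the bound.

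\textbf{Main obstacle.} The harder part is Step~2's bookkeeping: one must show the sum of all the explicit constants --- coming from the $\tfrac c2$, $(1-\theta)c$, $\tfrac{(1-\theta)^3}{\theta}c$ penalty-type terms (each multiplied by $\|Az_1-b\|^2 \le 4R_0/(\lam c)$), the $\tilde C_\theta$ and $\tfrac{1-\theta}{\theta}(1+\sigma)^2$ displacement terms, and the functional gap --- telescopes into the prescribed $\kappa_\theta R_0/\lam$. Since $\tilde C_\theta$ and the other coefficients involve $\sigma$ and $\tau_\theta$ in a nontrivial way, one needs the bound $\sigma \le \sigma_\theta \le 1/2$ and the definition \eqref{def:sigmatheta} of $\sigma_\theta$ to keep these under control; the precise choice of $\sigma_\theta$ as the root of that quadratic is presumably calibrated exactly so that this step works with the constant $\kappa_\theta$. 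Step~1, by contrast, is a routine adaptation of known QP-AIPP arguments and I would not expect surprises there.
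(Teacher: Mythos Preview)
Your Step~1 is correct and matches the paper's argument (which is packaged as Lemma~\ref{lem:auxNewNest2} applied with $\tilde\phi=\lam{\cal L}^\theta_c(\cdot;\lm_0)$ and $s=1$, then combined with {\bf(A4)} and $c>2\bar c$).

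Your Step~2, however, contains a genuine error. From $\phi_{\bar c}(z_1)=(f+h)(z_1)+(\bar c/2)\|Az_1-b\|^2\ge\phi^*_{\bar c}$ you conclude $(f+h)(z_1)-\phi^*_{\bar c}\le(\bar c/2)\|Az_1-b\|^2$, but the inequality goes the other way: what follows is only the \emph{lower} bound $(f+h)(z_1)-\phi^*_{\bar c}\ge-(\bar c/2)\|Az_1-b\|^2$. Assumption {\bf(A4)} gives no upper control on $(f+h)(z_1)-\phi^*_{\bar c}$ by itself; indeed $z_1$ could have large objective value. The fix is not to separate this term at all: keep the intermediate estimate from Step~1 in the stronger form
\[
{\cal L}^\theta_c(z_1,\lm_0)-\phi^*_{\bar c}=(f+h)(z_1)+\tfrac{c}{2}\|Az_1-b\|^2-\phi^*_{\bar c}\le \frac{R_0}{\lambda},
\]
which is precisely \eqref{aux5555} in the paper, and then bound the \emph{increment} ${\cal L}^\theta_c(z_1,\lm_1)+\eta_1-\bigl({\cal L}^\theta_c(z_1,\lm_0)-\phi^*_{\bar c}\bigr)$. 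Since $\lm_0=0$ forces $\lm_1=\Delta\lm_1=c(Az_1-b)$, this increment collapses to a linear combination of $c\|Az_1-b\|^2$ and $\|r_1\|^2/\lambda$, both controlled by \eqref{aux:ineqTz1p0}. That is exactly what the paper does, invoking \eqref{auglagDecreasing1} and the crude bound $\tilde C_\theta\le(1-\theta)/(\theta\tau_\theta)$.

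A minor remark on your ``main obstacle'': the quadratic \eqref{def:sigmatheta} defining $\sigma_\theta$ is not needed here at all. Only $\sigma\le1/2$ and $\tau_\theta\le1/2$ enter the bookkeeping of this proposition (giving $(1+\sigma)^2\le9/4$ and $\tilde C_\theta\le(1-\theta)/(\theta\tau_\theta)$), and these suffice to land on $\kappa_\theta=1+16(1-\theta)/(\theta\tau_\theta)$. The calibration of $\sigma_\theta$ via \eqref{def:sigmatheta} is used elsewhere, namely to guarantee $C_\theta\ge1/8$ in Lemma~\ref{lem:auxdecL2}.
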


	\begin{proof}
		 The definitions of ${\cal L}^\theta_\pen$ and  $\mathcal{F}$ in \eqref{lagrangian2} and {\bf (A1)}, respectively, imply that  $ {\cal L}^\theta_\pen(z,\lm_0)=(f+h)(z)$ for all  $z\in\mathcal{F}$. Hence, from   \eqref{GIPPinc_ineq} and \eqref{def:Delta} with $k=1$,    and Lemma~\ref{lem:auxNewNest2} with 
		$\tilde{\phi} =\lambda {\cal L}^\theta_\pen(\cdot;\lm_0)$ and $s=1$, we obtain, 
		 for every $ z\in \mathcal{F}$,
		\begin{align*}
		    \lambda{\cal L}^\theta_\pen(z_1,\lm_0) + \frac{1-2\sigma^2}{2}\|r_1\|^2 &\leq   \lambda{\cal L}^\theta_\pen(z,\lm_0) +\|z-z_0\|^2 = \lambda \left[(f+h)(z) - \phi_{\bar {c}}^*\right] +\|z-z_0\|^2+\lambda\phi_{\bar {c}}^*.
		\end{align*} 
		The above inequality combined with the definition of $R_0$ in \eqref{def:Rlamb} and the fact that $\sigma \leq 1/2$  imply that 
		\begin{equation}\label{aux5555}
		  \frac{1}{4\lambda }\|r_1\|^2 +{\cal L}^\theta_\pen(z_1,\lm_0)-  \phi_{\bar {c}}^* \leq \frac{R_0}{\lambda}.
		\end{equation}
		Now note that the definition of $\phi^*_{\bar \pen}$ in {\bf (A4)} implies that
		$- (\bar{\pen}/2)\|Az_1-b\|^2\leq (f+h)(z_1) - \phi^*_{\bar \pen}$. Since $\lm_0=0$, it follows from the latter inequality and the definition of ${\cal L}^\theta_\pen$ in \eqref{lagrangian2} that 
		\[
		\frac{\pen-\bar \pen}2 \|Az_1-b\|^2\leq  (f+h)(z_1) 
		+ \frac{\pen}2 \|Az_1-b\|^2 - \phi^*_{\bar \pen}={\cal L}^\theta_\pen(z_1,\lm_0) - \phi^*_{\bar \pen}.
		\]
Hence, \eqref{aux:ineqTz1p0} follows by combining the above inequality with \eqref{aux5555} and by using that $c-\bar{c} > c-c/2=c/2$, in view of the fact that $c >2\bar{c}$.
	
Now, we proceed to prove \eqref{ineq:p1po}.	Since $\lm_0=0$, \eqref{def:multiplier} and \eqref{def:Delta}, both with $k=1$,  imply that
		\begin{equation}\label{eq:aux1111}
		\Delta \lm_1= c(Az_1-b).
		\end{equation}
		Also note that \eqref{def:Dtheta} and the fact that $\tau_\theta, \sigma \leq 1/2$ yield $\tilde C_\theta\leq (1-\theta)/(\theta \tau_\theta).$
Hence,	combining Lemma~\ref{aux:lemma-auglagb}, \eqref{def:newetak}, the first inequality  in  \eqref{ineq:deltazkrk}, all three with $k=1$, \eqref{eq:aux1111},   and the fact that $\sigma, \theta \in (0,1]$, we obtain
		\begin{align*}
		 & {\cal L}^\theta_\pen(z_1,\lm_1)+\eta_1 - {\cal L}^\theta_\pen(z_1,\lm_0)+\phi^*_{\bar \pen}
		 = \frac{(1-\theta)(2-\theta)}{2\pen}\|\Delta\lm_1\|^2 
		+\frac{(1-\theta)^3}{\theta\pen}\|\Delta \lm_1\|^2
		+\frac{\tilde C_\theta}{\lambda}\|r_1\|^2
		+\frac{1-\theta}{\theta\lambda}\|\Delta z_1\|^2\\[3mm]
		\leq&  \frac{1-\theta}{\theta}\left\{\frac{2}{\pen} \|\Delta\lm_1\|^2 +\frac{\|r_1\|^2}{\lambda \tau_\theta}+
 \frac{(1+\sigma)^2\|r_1\|^2}{\lambda}\right\}
		\le \frac{1-\theta}{\theta}\left\{
		2c\|A z_1-b\|^2  +\frac{4\|r_1\|^2}{\tau_\theta\lambda}\right\}
 		\end{align*}
 		which combined with \eqref{aux:ineqTz1p0}, \eqref{aux5555}, and the fact that $\tau_\theta <1$ imply that
 		\begin{align*}
		{\cal L}^\theta_\pen(z_1,\lm_1)+\eta_1	
		\le & \frac{ R_0}{\lambda} 
		+  \frac{16(1-\theta)R_0}{\lambda\theta\tau_\theta}.
		\end{align*}
Inequality \eqref{ineq:p1po} follows immediately from the latter one and the definition of $\kappa_\theta$ given in  \eqref{def:constante0}.	
\end{proof}

	The following result presents some estimates regarding the sequence  $\{(z_k,r_k,\lm_k)\}$ which are useful  to establish  iteration-complexity bounds for the $\theta$-IPAAL method.
	
		\begin{proposition}
		Let  $\{(z_k,v_k,\lm_k)\}$ be 
		generated by the static $\theta$-IPAAL method and consider $\{r_k\}$ as in \eqref{def:Delta}. Then, the following inequalities hold
		\begin{equation}\label{ineqs:Feas-rk-pk}
		\|A z_k-b\|^2\leq \frac{4\kappa_\theta R_0}{\lambda\pen}, \qquad \sum_{j=1}^k \|r_j\|^2 \leq 12 \kappa_\theta R_0 ,\qquad (1-\theta)\|\lm_k\|^2\leq \frac{4\pen \kappa_\theta R_0}{\lambda \theta},
		\end{equation}
		where  $\kappa_\theta $   is as in    \eqref{def:constante0}. 
	\end{proposition}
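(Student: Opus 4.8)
The plan is to obtain all three bounds as essentially immediate consequences of the telescoping estimate \eqref{IneqT1+eta1}, the lower bound \eqref{aux:eq:T+eta00}, and the initial estimates \eqref{aux:ineqTz1p0}--\eqref{ineq:p1po} of Proposition~\ref{prop:constante0}. Beyond these, the only facts I will invoke are that $C_\theta \ge 1/8$ (Lemma~\ref{lem:auxdecL2}), that $\kappa_\theta \ge 1$ (immediate from \eqref{def:constante0} since $\theta,\tau_\theta\in(0,1]$), that $R_0\ge 0$ (see \eqref{ineq:majorizing-Rlambda}), and that $\phi_{\bar\pen}(z_k) - \phi^*_{\bar\pen} \ge 0$ by the definition of $\phi^*_{\bar\pen}$ in {\bf (A4)}.

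First I would record the chain of inequalities controlling the merit value. Discarding the nonnegative term $\frac{C_\theta}{\lambda}\sum_{i=2}^k\|r_i\|^2$ in \eqref{IneqT1+eta1} and then using \eqref{ineq:p1po} gives, for every $k\ge 1$,
\[
0 \;\le\; {\cal L}^\theta_\pen(z_k,\lm_k) + \eta_k \;\le\; {\cal L}^\theta_\pen(z_1,\lm_1) + \eta_1 \;\le\; \frac{\kappa_\theta R_0}{\lambda},
\]
where the leftmost inequality follows from \eqref{aux:eq:T+eta00} after dropping the three nonnegative terms $\frac{\pen}4\|Az_k-b\|^2$, $\frac{(1-\theta)\theta}{4\pen}\|\lm_k\|^2$, and $\phi_{\bar\pen}(z_k)-\phi^*_{\bar\pen}$, and the $k=1$ case of the middle inequality is trivial. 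Feeding the upper bound $\kappa_\theta R_0/\lambda$ back into \eqref{aux:eq:T+eta00} and again using $\phi_{\bar\pen}(z_k)-\phi^*_{\bar\pen}\ge0$ yields
\[
\frac{\pen}4\|Az_k-b\|^2 \le \frac{\kappa_\theta R_0}{\lambda}, \qquad \frac{(1-\theta)\theta}{4\pen}\|\lm_k\|^2 \le \frac{\kappa_\theta R_0}{\lambda},
\]
which are precisely the first and third inequalities of the proposition after clearing denominators.

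It remains to bound $\sum_{j=1}^k\|r_j\|^2$. Using the nonnegativity ${\cal L}^\theta_\pen(z_k,\lm_k)+\eta_k\ge0$ just established together with \eqref{ineq:p1po} in \eqref{IneqT1+eta1} gives $\frac{C_\theta}{\lambda}\sum_{i=2}^k\|r_i\|^2 \le \kappa_\theta R_0/\lambda$, whence $C_\theta\ge 1/8$ yields $\sum_{i=2}^k\|r_i\|^2 \le 8\kappa_\theta R_0$. The missing $i=1$ term is controlled by \eqref{aux:ineqTz1p0}, which (after discarding $\pen\|Az_1-b\|^2\ge0$) gives $\|r_1\|^2 \le 4R_0 \le 4\kappa_\theta R_0$, using $\kappa_\theta\ge1$ and $R_0\ge0$. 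Adding the two estimates gives $\sum_{j=1}^k\|r_j\|^2 \le 12\kappa_\theta R_0$.

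There is no genuine obstacle here: the proposition is a bookkeeping corollary of Lemma~\ref{lem:T+eta_decrease} and Proposition~\ref{prop:constante0}. The only points requiring a little care are the separate handling of $k=1$ (where \eqref{IneqT1+eta1} is vacuous and one appeals directly to \eqref{ineq:p1po} and \eqref{aux:ineqTz1p0}) and the explicit use of the constants $C_\theta\ge1/8$ (so that $\kappa_\theta R_0/C_\theta\le 8\kappa_\theta R_0$) and $\kappa_\theta\ge1$ (so that $4R_0\le 4\kappa_\theta R_0$), which are exactly what is needed to arrive at the clean bound $12\kappa_\theta R_0$.
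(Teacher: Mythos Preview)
Your proof is correct and follows essentially the same approach as the paper: combine \eqref{aux:eq:T+eta00} with the chain \eqref{IneqT1+eta1}--\eqref{ineq:p1po} to get the first and third bounds, then use $C_\theta\ge 1/8$ in \eqref{IneqT1+eta1} together with the separate bound \eqref{aux:ineqTz1p0} for $\|r_1\|^2$ to obtain the second. Your handling of the $k=1$ case and your derivation of the nonnegativity ${\cal L}^\theta_\pen(z_k,\lm_k)+\eta_k\ge 0$ from \eqref{aux:eq:T+eta00} are in fact slightly more explicit than the paper's own write-up.
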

	\begin{proof}	The first and third inequalities in \eqref{ineqs:Feas-rk-pk} follow immediately by combining \eqref{aux:eq:T+eta00} and \eqref{ineq:p1po}.
Now, note that  \eqref{aux:eq:T+eta1} implies that  ${\cal L}^\theta_\pen(z_{k},\lm_{k})   +\eta_k\geq 0$ for every $k\geq 1$. 
		Hence, since, in view of \eqref{def:BCtheta},   $C_\theta\geq 1/8$, we obtain by combining \eqref{IneqT1+eta1}  and \eqref{ineq:p1po}  that 
		\[
		\sum_{j=2}^{k} \frac{1}{8\lambda}\|r_j\|^2 \leq  	
		{\cal L}^\theta_\pen(z_1,\lm_1)   +\eta_1-\left({\cal L}^\theta_\pen(z_{k},\lm_{k})   +\eta_k\right)\leq \kappa_\theta.
		\]
		We also obtain from the inequality in \eqref{aux:ineqTz1p0} and  the definition of $\kappa_\theta$ in    \eqref{def:constante0} that
		$$
		\|r_1\|^2\leq  4\lambda \kappa_\theta.
		$$
		The above inequalities easily imply the second inequality in \eqref{ineqs:Feas-rk-pk}. \end{proof}
	The next proposition shows some useful relations on the sequence $\{(\hat z_k, \hat v_k,\hat \lm_k)\}$. It shows that $(\hat z_k, \hat v_k,\hat \lm_k)$ satisfies the inclusion in \eqref{eq:approx_stationary1} and  that   $\{\|\hat v_k\|\}$ and $\{\|\hat z_k-z_k\|\}$ are  controlled by the residual sequence $\{\|r_k\|\}$ which has a subsequence converging to zero, in view of the second inequality in \eqref{ineqs:Feas-rk-pk}. Moreover, it also presents some useful bounds on the feasibility of $\{\hat z_k\}$ and the boundedness of the auxiliary  sequence $\{\hat \lm_k\}$ associated to the Lagrange multipliers.
	\begin{proposition}\label{mainauxprop1} 
		Consider the sequences $\{(z_k,v_k,\varepsilon_k)\}$
		and $\{(\hat z_k,\hat v_k,\hat \lm_k)\}$ be
		generated by the static $\theta$-IPAAL method and let $\{r_k\}$ be as in  \eqref{def:Delta}.  Then, the following relations hold, for every $k\geq 1$:
		\begin{equation}\label{inclusionAlgoProp}
		\hat v_k \in \nabla f(\hat z_k) + \partial h(\hat z_k) +A^* \hat \lm_k, \qquad \|\hat v_k\|	\leq \frac{\left(1+ 2\sigma \sqrt{\lambda L_c + 1}\right)\|r_k\|}{\lam},
		\end{equation}
     \begin{equation}\label{ineqzztilde}
		\|\hat z_k-z_k\|\leq \frac{\sigma \|r_k\|}{\sqrt{\lambda L_c+1}}, \qquad \|A \hat z_k -b\| \leq 4\sqrt{\frac{\kappa_\theta R_0}{\lambda\pen}},\qquad 
		\|\hat \lm_k\|\leq  6\sqrt{\frac{\pen\kappa_\theta R_0}{\theta\lambda}},
	 \end{equation}
		where  $L_c$  and $\kappa_\theta$ are as in  \eqref{definition of sigma} and \eqref{def:constante0}, respectively.
	\end{proposition}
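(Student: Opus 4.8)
The plan is to obtain the inclusion together with the two norm bounds in \eqref{inclusionAlgoProp} and the estimate on $\|\hat z_k-z_k\|$ in \eqref{ineqzztilde} as a direct consequence of Proposition~\ref{prop:approxsol} applied to the refinement procedure called in step~2 of the static $\theta$-IPAAL method, and then to derive the feasibility and multiplier bounds in \eqref{ineqzztilde} by feeding this into the a priori estimates \eqref{ineqs:Feas-rk-pk}.

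First I would set the refinement procedure up correctly. Step~2 invokes it with $(g,h)=(g_k,h)$ and $(\lambda,z^-,z,v)=(\lambda,z_{k-1},z_k,v_k)$; since $g_k$ differs from $f$ only by an affine term and $(\pen/2)\|A\cdot-b\|^2$, its gradient is $L_c$-Lipschitz, so the procedure applies with $M=L_c$. Comparing \eqref{lagrangian2} and \eqref{def:ghtilde} shows ${\cal L}^\theta_\pen(\cdot;\lm_{k-1})=g_k+h$, so the inclusion in \eqref{GIPPinc_ineq} is exactly condition \eqref{incl:proc} with $\varepsilon=\varepsilon_k$; hence Proposition~\ref{prop:approxsol}(b) gives $\Delta\le\varepsilon_k$ for the scalar $\Delta$ in \eqref{eq:ref_var}. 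On the other hand, $v_k+z_{k-1}-z_k=-r_k$ by \eqref{def:Delta}, so the inequality in \eqref{GIPPinc_ineq} yields $2\varepsilon_k\le\sigma^2\|r_k\|^2$ and therefore $\Delta\le\sigma^2\|r_k\|^2/2$. Substituting $M=L_c$, $\Delta\le\sigma^2\|r_k\|^2/2$ and $\|v_k+z_{k-1}-z_k\|=\|r_k\|$ into the three relations in \eqref{eq:inclv'} immediately produces the norm bound on $\hat v_k$ in \eqref{inclusionAlgoProp} and the bound on $\|\hat z_k-z_k\|$ in \eqref{ineqzztilde}. The inclusion in \eqref{inclusionAlgoProp} then follows from the membership $\hat v_k\in\nabla g_k(\hat z_k)+\partial h(\hat z_k)$ in \eqref{eq:inclv'} after noting, via \eqref{def:ghtilde} and \eqref{def:wkpkhat}, that $\nabla g_k(\hat z_k)=\nabla f(\hat z_k)+A^*[(1-\theta)\lm_{k-1}+\pen(A\hat z_k-b)]=\nabla f(\hat z_k)+A^*\hat\lm_k$.

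For the feasibility bound I would use $\|A\hat z_k-b\|\le\|Az_k-b\|+\|A\|\,\|\hat z_k-z_k\|$, estimate the first summand by $2\sqrt{\kappa_\theta R_0/(\lambda\pen)}$ via the first inequality in \eqref{ineqs:Feas-rk-pk}, and the second using the bound on $\|\hat z_k-z_k\|$ above together with $\sigma\le\sigma_\theta\le1/2$, the inequality $\|A\|^2\le L_c/\pen$ (immediate from $L_c=L+\pen\|A\|^2$), and $\|r_k\|^2\le12\kappa_\theta R_0$ (the second inequality in \eqref{ineqs:Feas-rk-pk} keeping only its $k$-th term); this gives $\|A\|\,\|\hat z_k-z_k\|\le\|r_k\|/(2\sqrt{\lambda\pen})\le\sqrt{3\kappa_\theta R_0/(\lambda\pen)}$, and since $2+\sqrt3<4$ the claimed bound follows. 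For the multiplier bound I would write $\hat\lm_k=(1-\theta)\lm_{k-1}+\pen(A\hat z_k-b)$; the third inequality in \eqref{ineqs:Feas-rk-pk} with $1-\theta\le1$ gives $(1-\theta)\|\lm_{k-1}\|\le2\sqrt{\pen\kappa_\theta R_0/(\lambda\theta)}$ (trivially true when $k=1$ since $\lm_0=0$), while the feasibility bound just proved gives $\pen\,\|A\hat z_k-b\|\le4\sqrt{\pen\kappa_\theta R_0/\lambda}\le4\sqrt{\pen\kappa_\theta R_0/(\lambda\theta)}$ using $\theta\le1$; adding the two yields $\|\hat\lm_k\|\le6\sqrt{\pen\kappa_\theta R_0/(\theta\lambda)}$.

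The only delicate step is the first one: one must match the data of step~2 to the hypotheses of the refinement procedure --- in particular identify $g_k+h$ with ${\cal L}^\theta_\pen(\cdot;\lm_{k-1})$ and the Lipschitz constant with $L_c$ --- and then chain Proposition~\ref{prop:approxsol}(b) with the second inequality in \eqref{GIPPinc_ineq} to pin the auxiliary quantity $\Delta$ down by $\sigma^2\|r_k\|^2/2$. Once this is in place, everything else is routine estimation combining Proposition~\ref{prop:approxsol}(a) with \eqref{ineqs:Feas-rk-pk}.
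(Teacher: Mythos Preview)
Your proposal is correct and follows essentially the same route as the paper: apply Proposition~\ref{prop:approxsol} with $(g,M)=(g_k,L_c)$ to obtain the inclusion, the bound on $\|\hat v_k\|$, and the bound on $\|\hat z_k-z_k\|$ (using $\Delta\le\varepsilon_k\le\sigma^2\|r_k\|^2/2$ and $\|v_k+z_{k-1}-z_k\|=\|r_k\|$), and then combine these with the triangle inequality and the a~priori estimates \eqref{ineqs:Feas-rk-pk} to get the feasibility and multiplier bounds. The only cosmetic difference is that the paper writes the intermediate bound on $\|A\|\,\|\hat z_k-z_k\|$ as $2\sigma\|A\|\sqrt{3\kappa_\theta R_0}/\sqrt{\lambda L_c+1}$ before simplifying, whereas you simplify in a slightly different order; the resulting constants are the same.
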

	\begin{proof} First note that Step~2 of the $\theta$-IPAAL method computes  $(\hat z_k, \hat v_k )$ by the refinement procedure with input 
	$(g, h) =(g_k, h)$ and 	$(\lam,z^-,z, v) = (\lam,z_{k-1},z_k,v_k)$
	where $g_k$  as defined   in \eqref {def:ghtilde} has gradient  $L_c$-Lipschitz continuous with  $L_c$ as in  \eqref{definition of sigma}. Hence, the inclusion in \eqref{inclusionAlgoProp} follows from the inclusion in Proposition~\ref{prop:approxsol} a) and the definitions of $g_k$ and $\hat \lm_k$ given in \eqref{def:ghtilde} and \eqref{def:wkpkhat}, respectively. Now note that the scalar $\Delta$ output by the refinement procedure satisfies $\Delta \leq \varepsilon_k$, in view of  the inclusion in \eqref{GIPPinc_ineq}, the fact that $g+h={\cal L}^\theta_c(\cdot;\lm_{k-1})$, and  Proposition~\ref{prop:approxsol} b). Hence,  the inequalities in Proposition~\ref{prop:approxsol} a) imply that, for every $k\geq 1$,
	$$
	\lambda\|\hat v_k\|	\leq \|v_k+z_k-z_{k-1}\|+2\sqrt{2(\lambda L_c + 1)
	]\varepsilon_k}, \quad \|\hat z_k-z_k\| \leq \sqrt{2(\lambda L_c+1)^{-1} \varepsilon_k}
	$$
	which combined with  the inequality in \eqref{GIPPinc_ineq} and the definition of $r_k$ in \eqref{def:Delta} proves  the inequality in \eqref{inclusionAlgoProp} and the first inequality in \eqref{ineqzztilde}.
	
	 Using   the triangle inequality for norms, the first  inequality  in \eqref{ineqzztilde}, and the first and second  inequalities in \eqref{ineqs:Feas-rk-pk}, we obtain 
		\begin{align*}
		\|A \hat z_k -b\| &\leq \|A z_k-b\| +\|A\|\|\hat z_k-z_k\|  \leq 2\sqrt{\frac{\kappa_\theta R_0}{\lambda\pen}} +  
		\frac{2\sigma\|A\|\sqrt{3\kappa_\theta R_0}}{\sqrt{\lambda L_c +1}}\leq 2\sqrt{\frac{\kappa_\theta R_0}{\lambda\pen}} + \sqrt{\frac{3\kappa_\theta R_0}{\lambda c}},
		\end{align*}
		where the last  inequality is due to $ \sqrt{\lambda L_c +1}\geq \|A\|\sqrt{\lambda c}$ and the fact that $\sigma \leq \sigma_\theta \leq  1/2$. Hence,   the second  inequality in \eqref{ineqzztilde} follows.
		Now,  from the definition of $\hat \lm_k$ given in \eqref{def:wkpkhat}, the last inequality  in \eqref{ineqs:Feas-rk-pk}, the second  inequality in \eqref{ineqzztilde},  and the Cauchy-Schwarz inequality, we obtain
		\[ 
		\|\hat \lm_k\| \leq (1-\theta)\|\lm_{k-1}\|+\pen\|A\hat z_k-b\|\leq 2\sqrt{\frac{\pen \kappa_\theta R_0}{\theta \lambda}}+  4c\sqrt{\frac{\kappa_\theta R_0}{\lambda\pen}}=6\sqrt{\frac{\pen \kappa_\theta R_0}{\theta \lambda}}.
		\]
	 \end{proof}
	
	The  following  inequality, which follows by combining the second inequality in \eqref{ineqs:Feas-rk-pk},  the inequality in \eqref{inclusionAlgoProp} and the definition of $\sigma$ in \eqref{definition of sigma},  will be frequently used: 
	\begin{equation}\label{auxIneq:vkhat} 
	\|\hat v_k\|\leq \frac{3\|r_k\|}{\lambda} \leq \frac{6\sqrt{3\kappa_\theta R_0}}{\lambda} \qquad \forall k\geq 1.    
	\end{equation}
	We are now  ready to prove Theorem~\ref{mainprop1}.
	\vspace{1mm}
	
	\noindent{\bf Proof of Theorem \ref{mainprop1}:}	
 a) It follows from the second inequality in \eqref{ineqs:Feas-rk-pk}  that, for every $k\geq 1$,
	$$
	k\min\{\|r_j\|^2,j=1,\ldots,k\} \leq 12\kappa_\theta R_0.
	$$
Consider the index~$j\in \{1,\ldots, k\}$ that achieves the above minimum. It follows from the above inequality and  the first inequality in \eqref{auxIneq:vkhat} with $k=j$ that
	$$
	\|\hat v_j\| \leq \frac{3 \|r_j\|}{\lambda}\leq \frac{6\sqrt{3\kappa_\theta R_0}}{\lambda\sqrt{ k}},
	$$
	which combined with the stopping criterion of  the static $\theta$-IPAAL method proves the desired result.
	
	b)
	First note that the static $\theta$-IPAAL method invokes the ACG method with $\psi_s$ and $\psi_n$ as in \eqref{def:psi-s e psi-n} and then $\nabla \psi_s$ has Lipschitz constant $M_s$ and $\psi_n$ is $\mu$-strongly convex, where $M_s$ and $\mu$ are as in \eqref{def:Ms-muIPAAL}. Hence, it follows from Proposition~\ref{lem:nest_complex} and the definition of $\sigma$ given in \eqref{definition of sigma} that  the static $\theta$-IPAAL method performs at most 
	\[	
	\left\lceil 1+\sqrt{\frac{\lambda L_c+\tau_\theta}{1-\tau_\theta}}\log^+_1\left(\left( 1+\max\left\{\sqrt{\lambda L_c+1},\frac{1}{\sigma_\theta}\right\}\right)\sqrt{2(\lambda L_c +\tau_\theta)}\right)\right\rceil 
	\]
	ACG iterations at each outer iteration. Hence, the statement in b) follows in view of  the definition of $\Theta_c$ and the fact that $\tau_\theta \leq 1/2$.
	
	c)
First note that a) ensures that the static $\theta$-IPAAL method has finite termination. Hence, in view of its stopping criterion and the inclusion in \eqref{inclusionAlgoProp}, we immediately obtain the relations in  \eqref{Main_ineq-feasibility}. Moreover, the output of the $\theta$-IPAAL method combined with the second inequality in \eqref{ineqzztilde} imply that \eqref{eq:feasib} holds, concluding the proof. 
\qed

	\subsection{Proof of Theorem~\ref{mainprop2}}\label{proof of main theo2}
This subsection is devoted to the proof of Theorem~\ref{mainprop2}.

We start by specifying some bounds on the sequence $\{z_k\}$ and $\{\hat z_k\}$ generated by the $\theta$-IPAAL method.
	\begin{lemma}\label{lem:zkz*bounded} 	In addition to {\bf (A1)}--{\bf (A4)}, assume that conditions {\bf (B1)}--{\bf (B2)} hold and let $\bar{z}$ be as in
	{\bf (B1)}.
	Then, the sequences $\{z_k\}$ and $\{\hat z_k\}$ generated by the $\theta$-IPAAL method satisfy 
		\begin{equation}\label{ineq:zkz0}
		\max\{\|z_k- z_0\|,\|z_k- \bar z\|\}\leq D(\alpha), \qquad \max\{\|\hat z_k-z_0\|,\|\hat z_k-\bar{z}\|\}\leq  D(\alpha)+\sqrt{3\kappa_\theta R_0}, \qquad \forall k\geq 1,
		\end{equation}
		where   $\alpha$ is as  in Theorem~\ref{mainprop2} and  $D(\alpha)$  is as in {\bf (B2)}.
		\end{lemma}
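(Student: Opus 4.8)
The plan is to exhibit one bounded convex set --- the sublevel set $L_{\phi_{\bar c}}(\alpha)$ --- that simultaneously contains $z_0$, $\bar z$, and every iterate $z_k$; the first inequality in \eqref{ineq:zkz0} is then immediate from the finite-diameter bound in {\bf (B2)}, and the second follows by adding to it the crude estimate on $\|\hat z_k-z_k\|$ that is already available from Proposition~\ref{mainauxprop1}.

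First I would note that $z_0$ and $\bar z$ lie in $L_{\phi_{\bar c}}(\alpha)$ trivially, since $\alpha$ is chosen to be at least $\max\{\phi_{\bar c}(z_0),\phi_{\bar c}(\bar z)\}$. The substantive step is to show $\phi_{\bar c}(z_k)\le\alpha$ for every $k\ge 1$, which I would do by chaining three earlier facts: (i) inequality \eqref{aux:eq:T+eta00} of Lemma~\ref{lem:T+eta_decrease}(b), after dropping its two nonnegative terms $\frac{c}{4}\|Az_k-b\|^2$ and $\frac{(1-\theta)\theta}{4c}\|\lm_k\|^2$, gives $\phi_{\bar c}(z_k)-\phi^*_{\bar c}\le{\cal L}^\theta_c(z_k,\lm_k)+\eta_k$; (ii) the descent estimate \eqref{IneqT1+eta1}, after dropping the nonnegative term $\frac{C_\theta}{\lambda}\sum_{i=2}^k\|r_i\|^2$ (here $C_\theta\ge 1/8$), gives ${\cal L}^\theta_c(z_k,\lm_k)+\eta_k\le{\cal L}^\theta_c(z_1,\lm_1)+\eta_1$; and (iii) the penalty-parameter-uniform bound \eqref{ineq:p1po} gives ${\cal L}^\theta_c(z_1,\lm_1)+\eta_1\le\kappa_\theta R_0/\lambda$. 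Combining these yields $\phi_{\bar c}(z_k)\le\phi^*_{\bar c}+\kappa_\theta R_0/\lambda$, and since $\phi^*_{\bar c}=\inf_z\phi_{\bar c}(z)\le\phi_{\bar c}(z_0)\le\max\{\phi_{\bar c}(z_0),\phi_{\bar c}(\bar z)\}$, the right-hand side is $\le\alpha$ by the choice of $\alpha$. Thus $z_k,z_0,\bar z\in L_{\phi_{\bar c}}(\alpha)$, and {\bf (B2)} gives $\|z_k-z_0\|\le D(\alpha)$ and $\|z_k-\bar z\|\le D(\alpha)$, i.e., the first bound in \eqref{ineq:zkz0}.

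For the second bound I would use the first inequality of \eqref{ineqzztilde}, namely $\|\hat z_k-z_k\|\le\sigma\|r_k\|/\sqrt{\lambda L_c+1}$; since $\sigma\le\sigma_\theta\le 1/2$ and $\lambda L_c+1\ge 1$, the prefactor is at most $1/2$, and the second inequality in \eqref{ineqs:Feas-rk-pk} gives $\|r_k\|^2\le\sum_{j=1}^k\|r_j\|^2\le 12\kappa_\theta R_0$, so $\|\hat z_k-z_k\|\le\sqrt{3\kappa_\theta R_0}$. Applying the triangle inequality and the first bound in \eqref{ineq:zkz0} to both $z_0$ and $\bar z$ then yields $\max\{\|\hat z_k-z_0\|,\|\hat z_k-\bar z\|\}\le D(\alpha)+\sqrt{3\kappa_\theta R_0}$. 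The only non-routine ingredient is fact (iii): one must know that ${\cal L}^\theta_c(z_1,\lm_1)+\eta_1$ is bounded by a quantity independent of $c$, so that the level $\alpha$ --- and hence $D(\alpha)$ and $S(\alpha)$, which feed into $N_0$ in Theorem~\ref{mainprop2} --- does not deteriorate as $c$ is doubled in the dynamic scheme. Everything else is bookkeeping with constants fixed earlier.
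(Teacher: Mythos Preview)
Your proof is correct and follows essentially the same route as the paper's: show $z_0,\bar z,z_k\in L_{\phi_{\bar c}}(\alpha)$ via the chain \eqref{aux:eq:T+eta00}~$\to$~\eqref{IneqT1+eta1}~$\to$~\eqref{ineq:p1po}, apply {\bf (B2)}, and then add the crude bound $\|\hat z_k-z_k\|\le\sqrt{3\kappa_\theta R_0}$ from \eqref{ineqzztilde} and \eqref{ineqs:Feas-rk-pk}. If anything, you are slightly more explicit than the paper, which cites only ``Lemma~\ref{lem:T+eta_decrease}(b) combined with \eqref{ineq:p1po}'' and leaves the intermediate descent step \eqref{IneqT1+eta1} implicit.
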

	\begin{proof}
Since $\alpha=\kappa_\theta R_0/\lambda+\max\{\phi_{\bar{\pen}}(z_0),\phi_{\bar{\pen}}(\bar z)\}$,  Lemma~\ref{lem:T+eta_decrease} b) combined with   \eqref{ineq:p1po} and the definition of $\phi^*_{\bar \pen}$ given in {\bf(A4)} imply that
		\begin{equation} \label{level-alpha}
		\phi_{\bar{\pen}}(z_k) \leq \frac{\kappa_\theta R_0}{\lambda} + \phi^*_{\bar \pen}\leq   \frac{\kappa_\theta R_0}{\lambda} + \phi_{\bar \pen}(z_0)\leq\alpha, \qquad \forall k \geq 1,
		\end{equation}
		which means that $\{z_k\}$ is in the $\alpha$-sublevel set $L_{\phi_{\bar\pen}}(\alpha)$.  Since $\kappa_\theta>0$, the definition of $\alpha$ immediately yields  $\phi_{\bar{c}}(z_0) \leq \alpha$  and $\phi_{\bar{c}}(\bar z) \leq \alpha$, or equivalently $z_0,\bar{z}\in L_{\phi_{\bar \pen}}(\alpha)$. Hence, the first inequality in \eqref{ineq:zkz0} holds in view of assumption {\bf (B2)}.
		Now, using the first inequality in  \eqref{ineqzztilde}, the second inequality in \eqref{ineqs:Feas-rk-pk}, and the fact that $\sigma\leq \sigma_\theta\leq 1/2$ (see \eqref{definition of sigma}), we obtain
		$$
		\|\hat z_k-z_k\|\leq \frac{\sigma \|r_k\|}{\sqrt{\lambda L_c+1}}\leq  \sqrt{3\kappa_\theta R_0},\quad \forall k\geq 1.
		$$
		Hence, in view of the first inequality in 
		\eqref{ineq:zkz0} and  the triangle inequality, we conclude that, for every $k\geq 1$, 
		$$
		\max\{\|\hat z_k-z_0\|,\|\hat z_k-\bar{z}\|\}\leq  \max\{\|z_k-z_0\|,\|z_k-\bar{z}\|\}+\|\hat z_k-z_k\|\leq  D(\alpha)+\sqrt{3\kappa_\theta R_0},
		$$
		which proves the last inequality in \eqref{ineq:zkz0}.
		\end{proof}

	Next we state a  technical result which will be used in the proof of the subsequently lemma. Its proof can be found, for instance,  in \cite[Lemma~1]{PPmetNonconvex2019}.
	
	\begin{lemma}\label{lem:bound_xiN}
		Assume that $X$ is a convex set and $\bar x \in \inte(X)$, and let $\partial X$ denote the boundary of $X$.
		Then, $\dist_{\partial X}(\bar x)>0$ and
		\[
		\|\xi\| \le \frac{\inner{\xi}{x-\bar x}}{\dist_{\partial X}(\bar x)} \quad \forall x \in X, \ \forall \xi \in N_X(x).
		\]
	\end{lemma}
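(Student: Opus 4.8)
The plan is to reduce the claim to the standard separation/subgradient inequality for convex sets. First I would note that $\dist_{\partial X}(\bar x)>0$ is immediate: since $\bar x\in\inte(X)$, there is a radius $r>0$ with $\oball{\bar x}{r}\subseteq X$, and every point of $\partial X$ lies outside this open ball, so $\dist_{\partial X}(\bar x)\ge r>0$. (If $X=\Re^n$ then $\partial X=\emptyset$ and the inequality is vacuous; otherwise $\partial X\ne\emptyset$ and the distance is a well-defined positive number, which I will denote $\delta:=\dist_{\partial X}(\bar x)$.)

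For the main inequality, fix $x\in X$ and $\xi\in N_X(x)$. The case $\xi=0$ is trivial, so assume $\xi\ne0$ and set $w:=\bar x+\delta\,\xi/\|\xi\|$. The key geometric fact is that $w\in\cl(X)$: indeed, $\oball{\bar x}{\delta}\subseteq X$ because no boundary point is within distance $\delta$ of $\bar x$ and $X$ is convex with $\bar x$ interior (so the open ball of radius $\delta$ about $\bar x$ contains no boundary point and hence, being connected and meeting $\inte(X)$, lies in $\inte(X)\subseteq X$); thus $w$, which has distance exactly $\delta$ from $\bar x$, lies in $\cl(\oball{\bar x}{\delta})\subseteq\cl(X)$. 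Now apply the defining property of the normal cone, $\inner{\xi}{y-x}\le 0$ for all $y\in X$, extended to $\cl(X)$ by continuity; taking $y=w$ gives $\inner{\xi}{w-x}\le 0$. Rewriting $w-x=(\bar x-x)+\delta\,\xi/\|\xi\|$, this becomes
\[
\delta\,\|\xi\| = \inner{\xi}{\delta\,\xi/\|\xi\|} \le \inner{\xi}{x-\bar x},
\]
which is exactly $\|\xi\|\le \inner{\xi}{x-\bar x}/\delta$, as claimed.

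The only mildly delicate point is justifying $w\in\cl(X)$, i.e. that $\oball{\bar x}{\delta}\subseteq X$; this is a standard consequence of convexity (a convex set with an interior point contains the largest open ball about that point that avoids the boundary), and since the lemma is quoted from \cite[Lemma~1]{PPmetNonconvex2019} one may simply cite it. I do not anticipate any real obstacle here — the whole argument is a two-line computation once the ball inclusion is in place — so in the paper it suffices to record the statement and point to the reference, which is what the surrounding text already does.
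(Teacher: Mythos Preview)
Your proof is correct. The paper itself does not prove this lemma but simply cites \cite[Lemma~1]{PPmetNonconvex2019}, so your argument in fact supplies more than the paper does; the approach you give (pushing $\bar x$ a distance $\delta$ along $\xi/\|\xi\|$ to a point of $\cl(X)$ and applying the normal-cone inequality) is the standard one and is presumably what the cited reference contains.
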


	The following result shows that the component of the inclusion in \eqref{inclusionAlgoProp} lying in
	$\partial h(\hat z_k)$ is bounded.
	It is worth noting that its proof strongly relies
	on the bound for $\|\hat \lm_k\|$ derived in
	Proposition~\ref{mainauxprop1}. 
	
	\begin{lemma}\label{lem:xikbounded} In addition to {\bf (A1)}--{\bf (A4)}, assume that  conditions {\bf (B1)}--{\bf (B3)} hold and let $ M_\theta(\alpha)$ be defined by
		\begin{equation}\label{def:Mtheta}
		M_\theta(\alpha):=\frac{2\max\left\{D(\alpha),\sqrt{3\kappa_\theta R_0}\right\}}{\mbox{\rm dist}_{\partial (\dom h)} (\bar z)}\left[L D(\alpha)+\left(L+\frac{25}{\sqrt{\theta}\lambda}\right){\sqrt{3\kappa_\theta R_0}}+\|\nabla f(z_0)\|+ S(\alpha)\right].
		\end{equation}
	Let $\{(\hat z_k,\hat v_k,\hat \lm_k)\}$ be generated by the static $\theta$-IPAAL method and consider  the sequence $\{\hat\xi_k\}$ given by 
		\begin{equation}\label{def:xi}
		\hat \xi_k:= \hat v_k-  \nabla f(\hat z_k) - A^*\hat \lm_k, \qquad \forall k\geq 1.
		\end{equation}
Then,  $\hat \xi_k\in \partial h(\hat z_k)$ and $\|\hat \xi_k\|\leq M_\theta(\alpha)+S(\alpha),$	for every $k\geq 1$. 
	\end{lemma}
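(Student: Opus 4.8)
The inclusion $\hat\xi_k\in\partial h(\hat z_k)$ is immediate: it is exactly the inclusion in \eqref{inclusionAlgoProp} rewritten via the definition \eqref{def:xi} of $\hat\xi_k$. So the only real work is the norm estimate, and the plan is to split $\hat\xi_k$ according to {\bf (B3)} and then control the resulting normal-cone component by means of the Slater point $\bar z$. As a preliminary step I would record that $\hat z_k$ lies in the sublevel set $L_{\phi_{\bar\pen}}(\alpha)$, arguing as in the proof of Lemma~\ref{lem:zkz*bounded} (cf.\ \eqref{level-alpha}) but bounding $\phi_{\bar\pen}(\hat z_k)$ rather than $\phi_{\bar\pen}(z_k)$; this uses the descent inequality $\Delta\ge 0$ of the refinement procedure (Proposition~\ref{prop:approxsol}a)) together with the feasibility bound $\|A\hat z_k-b\|={\cal O}(1/\sqrt{\pen})$ and the multiplier bound of Proposition~\ref{mainauxprop1}. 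Granting this, {\bf (B3)} provides a decomposition $\hat\xi_k=\zeta_k+\nu_k$ with $\|\zeta_k\|\le S(\alpha)$ and $\nu_k\in N_{\dom h}(\hat z_k)$, so $\|\hat\xi_k\|\le S(\alpha)+\|\nu_k\|$ and it remains to show $\|\nu_k\|\le M_\theta(\alpha)$.

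To bound $\|\nu_k\|$ I would apply Lemma~\ref{lem:bound_xiN} with $X=\dom h$ and $\bar x=\bar z$ (the Slater point of {\bf (B1)}, which lies in $\inte(\dom h)$), obtaining $\|\nu_k\|\le\inner{\nu_k}{\hat z_k-\bar z}/\dist_{\partial(\dom h)}(\bar z)$. Writing $\nu_k=\hat v_k-\nabla f(\hat z_k)-A^*\hat\lm_k-\zeta_k$ and expanding the inner product, the only term that threatens to grow with the penalty parameter is $-\inner{A^*\hat\lm_k}{\hat z_k-\bar z}=-\inner{\hat\lm_k}{A\hat z_k-b}\le\|\hat\lm_k\|\,\|A\hat z_k-b\|$, where I used $A\bar z=b$. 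This is the heart of the proof: by the second and third inequalities in \eqref{ineqzztilde}, $\|A\hat z_k-b\|={\cal O}(1/\sqrt{\pen})$ while $\|\hat\lm_k\|={\cal O}(\sqrt{\pen})$, so their product is ${\cal O}(\kappa_\theta R_0/(\lambda\sqrt\theta))$ with the dependence on $\pen$ cancelling exactly. The remaining three inner products are handled by Cauchy--Schwarz: $\inner{\hat v_k}{\hat z_k-\bar z}\le\|\hat v_k\|\,\|\hat z_k-\bar z\|$ with $\|\hat v_k\|\le 6\sqrt{3\kappa_\theta R_0}/\lambda$ from \eqref{auxIneq:vkhat}; $-\inner{\nabla f(\hat z_k)}{\hat z_k-\bar z}\le(\|\nabla f(z_0)\|+L\|\hat z_k-z_0\|)\,\|\hat z_k-\bar z\|$ from the Lipschitz bound \eqref{gradLips}; and $-\inner{\zeta_k}{\hat z_k-\bar z}\le S(\alpha)\,\|\hat z_k-\bar z\|$. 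Substituting the bounds $\max\{\|\hat z_k-z_0\|,\|\hat z_k-\bar z\|\}\le D(\alpha)+\sqrt{3\kappa_\theta R_0}\le 2\max\{D(\alpha),\sqrt{3\kappa_\theta R_0}\}$ from Lemma~\ref{lem:zkz*bounded} and collecting constants then gives $\|\nu_k\|\le M_\theta(\alpha)$, where the slack built into the constant $25$ in \eqref{def:Mtheta} absorbs the $6/\lambda$ factor coming from $\|\hat v_k\|$ together with the ${\cal O}(1/(\lambda\sqrt\theta))$ contribution of the $\hat\lm_k$ cancellation.

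The main obstacle I anticipate is not conceptual but bookkeeping: verifying that the explicit numerical constants actually fit under $M_\theta(\alpha)$, and — a notch more delicate — carrying out the preliminary step that places $\hat z_k$ (and not merely $z_k$) in the relevant sublevel set, which is where the refinement-procedure descent inequality, the feasibility estimate $\|A\hat z_k-b\|={\cal O}(1/\sqrt{\pen})$, and the multiplier bound of Proposition~\ref{mainauxprop1} must be combined with care. Conceptually, though, the whole lemma rests on the single observation that the large, $\pen$-dependent quantities $\|\hat\lm_k\|$ and $\|A\hat z_k-b\|$ enter the estimate only through the product $\inner{\hat\lm_k}{A\hat z_k-b}$, which is uniformly bounded in $\pen$.
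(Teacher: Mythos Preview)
Your approach is essentially identical to the paper's: decompose $\hat\xi_k$ via {\bf (B3)}, bound the normal-cone piece using Lemma~\ref{lem:bound_xiN} at the Slater point $\bar z$, exploit $A\bar z=b$ so that the multiplier enters only through the product $\inner{\hat\lm_k}{A\hat z_k-b}$ (uniformly bounded by the last two inequalities in \eqref{ineqzztilde}), and control the remaining terms by Cauchy--Schwarz, the Lipschitz bound on $\nabla f$, \eqref{auxIneq:vkhat}, and the estimates of Lemma~\ref{lem:zkz*bounded}. The one difference is the ``preliminary step'' you single out: the paper does \emph{not} verify that $\hat z_k\in L_{\phi_{\bar\pen}}(\alpha)$ before invoking {\bf (B3)}; it simply applies the decomposition at $\hat z_k$ and proceeds, so the delicate sublevel argument you anticipate (and flag as the main obstacle) is absent from the published proof.
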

	\begin{proof}
		The first statement of the lemma immediately follows from the inclusion in  \eqref{inclusionAlgoProp} and the definition of  $\hat \xi_k$ given in \eqref{def:xi}. 
		Now note that by using the Cauchy-Schwarz inequality and the last two inequalities in \eqref{ineqzztilde},  we obtain
		\begin{align}\label{inez:auxxik2}
		\|\inner{\hat \lm_k}{A\hat z_k-b}\| &\leq  \|\hat \lm _k\|  \|A\hat z_k-b\|\leq  24\sqrt{\frac{\pen\kappa_\theta R_0}{\lambda\theta}}\sqrt{\frac{ \kappa_\theta R_0}{\lambda\pen}}\leq\frac{24\kappa_\theta R_0}{\lambda\sqrt{\theta}}, \qquad \forall k\geq 1,
		\end{align}
		where the last inequality is due to the fact that $c-\bar{c}\geq c/2$ (in view of  \eqref{assump:penaltyparameter}).
		On the other hand,  since   $\hat \xi_k \in \partial h(\hat z_k)$ for every $k\geq 1$, assumption {\bf (B3)} yields 
		\begin{equation}\label{decomposition-xik}
		\hat \xi_k= \hat \xi^s_k+\hat \xi^N_k, \quad \|\hat \xi^s_k\|\leq S(\alpha), \quad \hat \xi^N_k\in N_{\dom h}(\hat z_k).
		\end{equation}
		where $\alpha$ is  as in Theorem~\ref{mainprop2}.
		Let  $\bar{z}$ be as in ${\bf (B1})$ and note that $A\bar z=b$.	Hence, it follows from Lemma~\ref{lem:bound_xiN} with $x=\hat z_k$,  $\bar{x}=\bar{z}$ and $X= \dom h$, the Cauchy-Schwarz inequality, the triangle inequality, \eqref{inez:auxxik2} and  \eqref{decomposition-xik}, the Lipshichtz continuity of $\nabla f$ and \eqref{auxIneq:vkhat}
  that, for every $k\geq 1$, 
		\begin{align*} 
		\mbox{\rm dist}_{\partial (\dom h)} (\bar z)\|\hat \xi^N_k\| &\leq\inner {\hat \xi_k^N} {\hat z_k-\bar z} 
		= \inner{\hat \xi_k-\hat \xi^s_k}{\hat z_k-\bar z}=\inner{\hat v_k-\nabla f(\hat z_k) -\hat \xi_k^s}{\hat  z_k-\bar z}- \inner{\hat \lm_k}{A\hat z_k-b}\\
		&\le \left(\|\nabla f(\hat z_k)-\nabla f(z_0)\|+\|\nabla f(z_0)\|+ \|\hat v_k\|+\|\hat \xi_k^s\|\right)\|\bar z-\hat  z_k\|+\frac{24\kappa_\theta R_0}{\lambda\sqrt{\theta}}\\
		&\le \left(L\|\hat z_k-z_0\|+\|\nabla f(z_0)\|+ \frac{6\sqrt{3\kappa_\theta R_0} }{\lam}+S(\alpha)\right)\|\bar z-\hat  z_k\|+\frac{24\kappa_\theta R_0}{\lambda\sqrt{\theta}},
		\end{align*}
		%
		%
		which combined with the  last inequality in \eqref{ineq:zkz0} and the fact that $\theta<1$, imply that
		\begin{align}
\mbox{\rm dist}_{\partial (\dom h)} (\bar z)\|\hat \xi^N_k\| 
		&\le \left[L D(\alpha)+\left(L+\frac{6}{\lambda}\right){\sqrt{3\kappa_\theta R_0}}+\|\nabla f(z_0)\|+ S(\alpha)\right]\left(D(\alpha)+\sqrt{3\kappa_\theta R_0}\right)+\frac{24\kappa_\theta R_0}{\lambda\sqrt{\theta}}\nonumber\\
		&\le2 \left[L D(\alpha)+\left(L+\frac{14}{\sqrt{\theta}\lambda}\right){\sqrt{3\kappa_\theta R_0}}+\|\nabla f(z_0)\|+ S(\alpha)\right]\max\left\{D(\alpha),\sqrt{3\kappa_\theta R_0}\right\}\label{eqaux:xiN}		
		\end{align}
		The above inequalities and  \eqref{def:Mtheta} imply that 
		$$
		\|\hat \xi^N_k\|\leq  M_\theta(\alpha), \qquad \forall k\geq 1. 
		$$
		Hence, \eqref{decomposition-xik}  and  the triangle inequality imply that 
		\[
		\|\hat \xi_k\| \le \|\hat \xi^N_k\|+\|\hat \xi^s_k\|\leq  M_\theta(\alpha)+S(\alpha), \qquad  \forall k\geq 1,
		\]
		proving the last statement of the lemma. 	\end{proof}
	
	In the following,  we state  a basic result that will be used in the proof of the next proposition. Its proof can be found, for instance, in \cite[Lemma~1.4]{MaxJeffRen-admm}. 
	
	\begin{lemma}\label{lem:sigma+} 
		Let $S \in \Re^{l\times n}$ be a non-zero matrix and let $\sigma^+(S)$ denote  the smallest positive eigenvalue of $(S^*S)^{1/2}$. Then, 
		for every $u \in \Re^l$, there holds
		\[
		\|\cP_{S}(u)\|\leq \frac{1}{\sigma^+(S)}\|S^*u\|.
		\]
	\end{lemma}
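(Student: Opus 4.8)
The plan is to reduce the claimed inequality to the elementary fact that $S$ is bounded below by $\sigma^{+}(S)$ on the orthogonal complement of its kernel. First I would record two linear-algebra observations. Since ${\rm Im}(S)^{\perp} = \ker(S^{*})$, the residual $u - \cP_{S}(u)$ lies in $\ker(S^{*})$, and hence $S^{*}u = S^{*}\cP_{S}(u)$. Second, because $(S^{*}S)^{1/2}$ has the same kernel as $S$ and its smallest positive eigenvalue is $\sigma^{+}(S)$, every $w \in (\ker S)^{\perp} = {\rm Im}(S^{*})$ satisfies $\|Sw\|^{2} = \langle S^{*}Sw, w\rangle = \|(S^{*}S)^{1/2}w\|^{2} \ge \sigma^{+}(S)^{2}\|w\|^{2}$, i.e.\ $\|w\| \le \|Sw\|/\sigma^{+}(S)$.

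Next I would pick a convenient representative of $\cP_{S}(u)$. Since $\cP_{S}(u) \in {\rm Im}(S)$, there is $w \in \Re^{n}$ with $Sw = \cP_{S}(u)$, and replacing $w$ by its orthogonal projection onto ${\rm Im}(S^{*})$ (which leaves $Sw$ unchanged) I may assume $w \in {\rm Im}(S^{*})$; the bound of the previous paragraph then gives $\|w\| \le \|\cP_{S}(u)\|/\sigma^{+}(S)$.

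Finally I would combine the two facts. The inequality is trivial if $\cP_{S}(u) = 0$; otherwise, using $Sw = \cP_{S}(u)$, then $S^{*}\cP_{S}(u) = S^{*}u$, then the Cauchy--Schwarz inequality, and then the bound on $\|w\|$,
\[
\|\cP_{S}(u)\|^{2} = \langle Sw, Sw\rangle = \langle S^{*}Sw, w\rangle = \langle S^{*}u, w\rangle \le \|S^{*}u\|\,\|w\| \le \frac{\|S^{*}u\|\,\|\cP_{S}(u)\|}{\sigma^{+}(S)},
\]
and dividing through by $\|\cP_{S}(u)\|$ yields the claim. An equivalent route is to take a singular value decomposition $S = U\Sigma V^{*}$, observe that $\cP_{S}$ is the projection onto the span of the first ${\rm rank}(S)$ left singular vectors, and compare $\|\cP_{S}(u)\|^{2} = \sum_{i}\langle u_{i},u\rangle^{2}$ with $\|S^{*}u\|^{2} = \sum_{i}\sigma_{i}^{2}\langle u_{i},u\rangle^{2} \ge \sigma^{+}(S)^{2}\|\cP_{S}(u)\|^{2}$; but the coordinate-free argument above is shorter. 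I do not anticipate a genuine obstacle here: the only point requiring a little care is the reduction to $w \in {\rm Im}(S^{*})$, which is precisely what makes the lower bound $\|Sw\| \ge \sigma^{+}(S)\|w\|$ valid when $S$ has a nontrivial kernel.
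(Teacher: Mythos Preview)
Your proof is correct. The paper itself does not prove this lemma; it simply defers to an external reference (Lemma~1.4 of the cited ADMM paper by the same authors). Your argument is a clean self-contained proof: the two key observations---that $S^{*}u = S^{*}\cP_{S}(u)$ because $u-\cP_{S}(u)\in(\mathrm{Im}\,S)^{\perp}=\ker S^{*}$, and that $\|Sw\|\ge\sigma^{+}(S)\|w\|$ once $w$ is chosen in $(\ker S)^{\perp}=\mathrm{Im}\,S^{*}$---are exactly what is needed, and the Cauchy--Schwarz chain closes correctly. The SVD alternative you sketch at the end is in fact the most direct route and is likely how the cited reference argues.
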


	The next result shows that the sequences of multipliers $\{\lm_k\}$ and $\{\hat\lm_k\}$ are bounded by a quantity
	which does not depend on the penalty parameter $\pen$.
	This fact in turn is easily seen to imply
	an ${\cal O}(1/c)$ bound on $\|A \hat z_k-b\|$.

	\begin{proposition}\label{prop:auxbounded}
		Under the assumptions of Theorem~\ref{mainprop2}, the following inequalities hold 
		\begin{equation}\label{ineqlmk}
		\|\hat \lm_k\| \leq   N_0, \qquad
		\|\lm_k\|\leq  N_0 \qquad \forall k\geq 1,
		\end{equation}
		where $N_0$ is  as in $\eqref{N0}$.
	\end{proposition}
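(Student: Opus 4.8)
The plan is to reduce both inequalities in \eqref{ineqlmk} to bounds on $\|A^*\hat\lm_k\|$ and $\|A^*\lm_k\|$ and then apply Lemma~\ref{lem:sigma+}. First I would check that $\lm_k$ and $\hat\lm_k$ lie in ${\rm Im}(A)$ for every $k\ge1$: with $\bar z$ as in {\bf (B1)} we have $b=A\bar z$, hence $Az_k-b=A(z_k-\bar z)\in{\rm Im}(A)$ and $A\hat z_k-b=A(\hat z_k-\bar z)\in{\rm Im}(A)$, and, since $\lm_0=0$, the updates \eqref{def:multiplier} and \eqref{def:wkpkhat} propagate this membership by induction on $k$. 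Therefore $\cP_A\lm_k=\lm_k$ and $\cP_A\hat\lm_k=\hat\lm_k$, so Lemma~\ref{lem:sigma+} with $S=A$ yields $\sigma^+(A)\|\hat\lm_k\|\le\|A^*\hat\lm_k\|$ and $\sigma^+(A)\|\lm_k\|\le\|A^*\lm_k\|$; it then suffices to majorize each right-hand side by $\sigma^+(A)N_0$.

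For $\|A^*\hat\lm_k\|$ I would exploit the \emph{exact} inclusion in \eqref{inclusionAlgoProp}. Writing $A^*\hat\lm_k=\hat v_k-\nabla f(\hat z_k)-\hat\xi_k$ with $\hat\xi_k\in\partial h(\hat z_k)$ as in \eqref{def:xi}, and splitting $\hat\xi_k=\hat\xi_k^s+\hat\xi_k^N$ as in \eqref{decomposition-xik}, gives $A^*\hat\lm_k=w_k-\hat\xi_k^N$ with $w_k:=\hat v_k-\nabla f(\hat z_k)-\hat\xi_k^s$ and $\hat\xi_k^N\in N_{\dom h}(\hat z_k)$. The term $\|w_k\|$ is bounded by a $\pen$-independent quantity using \eqref{auxIneq:vkhat} for $\|\hat v_k\|$, the $L$-Lipschitz continuity of $\nabla f$ together with \eqref{ineq:zkz0} for $\|\nabla f(\hat z_k)\|$, and $\|\hat\xi_k^s\|\le S(\alpha)$; these are exactly the estimates already assembled in the proof of Lemma~\ref{lem:xikbounded}. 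For $\hat\xi_k^N$ I would invoke Lemma~\ref{lem:bound_xiN} with $X=\dom h$ and $\bar x=\bar z$, which gives $\mbox{\rm dist}_{\partial(\dom h)}(\bar z)\,\|\hat\xi_k^N\|\le\inner{\hat\xi_k^N}{\hat z_k-\bar z}=\inner{w_k}{\hat z_k-\bar z}-\inner{\hat\lm_k}{A\hat z_k-b}$ (using $A\bar z=b$); plugging in the bound on $\|\hat z_k-\bar z\|$ from \eqref{ineq:zkz0} and the $\pen$-independent estimate of $\inner{\hat\lm_k}{A\hat z_k-b}$ already produced in the proof of Lemma~\ref{lem:xikbounded}, and then simplifying the constants exactly as in the derivation of \eqref{eqaux:xiN}, one is left with $\|A^*\hat\lm_k\|\le\|w_k\|+\|\hat\xi_k^N\|\le\sigma^+(A)N_0$, which is the first inequality in \eqref{ineqlmk}.

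For $\|A^*\lm_k\|$ I would relate $\lm_k$ to $\hat\lm_k$: subtracting \eqref{def:multiplier} from \eqref{def:wkpkhat} gives $\hat\lm_k-\lm_k=\pen\,A(\hat z_k-z_k)$, whence $\|A^*\lm_k\|\le\|A^*\hat\lm_k\|+\pen\|A\|^2\|\hat z_k-z_k\|$; by the first inequality in \eqref{ineqzztilde}, the definition of $\sigma$ in \eqref{definition of sigma}, and the bound on $\|r_k\|$ in \eqref{ineqs:Feas-rk-pk}, the extra term is at most $\|r_k\|/\lambda\le 2\sqrt{3\kappa_\theta R_0}/\lambda$, a $\pen$-independent quantity absorbed by the slack in the $(L+14/(\lambda\sqrt\theta))\sqrt{3\kappa_\theta R_0}$ contribution to $N_0$. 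The main obstacle — and the only genuinely delicate point — is the circularity lurking in $\inner{\hat\lm_k}{A\hat z_k-b}$, which involves $\|\hat\lm_k\|$, the very quantity being bounded. This is broken as in Lemma~\ref{lem:xikbounded}, by combining the feasibility bound $\|A\hat z_k-b\|={\cal O}(1/\sqrt\pen)$ and the multiplier bound $\|\hat\lm_k\|={\cal O}(\sqrt\pen)$ of Proposition~\ref{mainauxprop1} into an ${\cal O}(1)$, $\pen$-independent estimate of their inner product; with that in hand the remaining work is careful but routine constant bookkeeping.
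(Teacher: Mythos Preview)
Your approach is correct and coincides with the paper's, up to minor organizational differences: the paper invokes Lemma~\ref{lem:xikbounded} directly to bound $\|\hat\xi_k\|$ (rather than re-deriving it inline as you do), and for $\|\lm_k\|$ the paper works directly in $\Re^l$ via $\|\lm_k\|\le\|\hat\lm_k\|+\pen\|A\|\|\hat z_k-z_k\|$ rather than first bounding $\|A^*\lm_k\|$; both routes yield the same extra term $2\sqrt{3\kappa_\theta R_0}/(\lambda\sigma^+(A))$ after using $\sigma^+(A)\le\|A\|$. Your identification of the circularity in $\inner{\hat\lm_k}{A\hat z_k-b}$ and its resolution via the $\sqrt{\pen}\cdot 1/\sqrt{\pen}$ cancellation from Proposition~\ref{mainauxprop1} is exactly the mechanism used in Lemma~\ref{lem:xikbounded}.
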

	\begin{proof}
		It follows from  \eqref{def:xi},  the triangle inequality for norms,  the Lipschitz continuity of $\nabla f$, the relation in \eqref{auxIneq:vkhat},
		and the last statement of	Lemma~\ref{lem:xikbounded},  that
		\begin{align}\label{ineq:auxbouded0}
		\|A^*\hat \lm_k\|&=\|\hat v_k-\nabla f(\hat z_k)- \hat \xi_k\|	\leq  \|\nabla f(\hat z_k)-\nabla f( z_0)\|+\|\nabla f(z_0)\|+\|\hat v_k\|+\|\hat \xi_k\|\\\nonumber
		& \leq  L\|\hat z_k- z_0\|+\|\nabla f(z_0)\|+\frac{6\sqrt{3\kappa_\theta R_0}}{\lambda}+ M_\theta(\alpha)+S(\alpha).
		\end{align}
		On the other hand, since $\lm_0=0$,    \eqref{def:wkpkhat} and \eqref{def:multiplier}  imply that   $\lm_k,\hat \lm_k \in {\rm  Im}\, A$, for every $k\geq 1$. Hence, it follows from Lemma~\ref{lem:sigma+} with $S=A$ that
		$$ 
		\|\hat \lm_k\| \leq \frac{1}{\sigma^+(A)}\|A^*\hat \lm_k\|,
		$$
		which combined with \eqref{ineq:auxbouded0} and  the last inequality in \eqref{ineq:zkz0} imply that
		\begin{align}
		\|\hat \lm_k\|&\leq \frac{1}{\sigma^+(A)}\left[L D(\alpha)+L\sqrt{3\kappa_\theta R_0}+\|\nabla f(z_0)\| + \frac{6\sqrt{3\kappa_\theta R_0}}{\lambda}+ M_\theta(\alpha)+S(\alpha)\right],\nonumber\\
		&=\frac{1}{\sigma^+(A)}\left[L D(\alpha)+\left(L+\frac{6}{\lambda}\right)\sqrt{3\kappa_\theta R_0}+\|\nabla f(z_0)\| + M_\theta(\alpha)+S(\alpha)\right]=:\hat N_0,
		 \qquad \forall k\geq 1.\label{def:Nhat}
		 	\end{align}
		Hence,  the first inequality in \eqref{ineqlmk} follows, in view of  the fact that $\hat N_0\leq N_0$ (see the definition of $N_0$ in \eqref{N0}).
		Now, subtracting  \eqref{def:wkpkhat} from  \eqref{def:multiplier}, using the triangle inequality for norms, 	 the first inequality in  \eqref{ineqzztilde}, the second inequality in \eqref{ineqs:Feas-rk-pk},	and the definitions of $L_c$  and $\sigma$ given  in \eqref{definition of sigma}, we obtain 
		\begin{align*}
		\|\lm_k\|&\leq \|\hat \lm_k\|+ \pen\| A(z_k-\hat z_k)\|\leq
		\|\hat \lm_k\| + \frac{\sigma c\|A\|\|r_k\|}{\sqrt{\lambda L_c+1}}\\
		&\leq \hat N_0+\frac{c\|A\|\|r_k\|}{\lambda(L+c\|A\|^2)+1}\leq \hat N_0+\frac{2\sqrt{3\kappa_\theta R_0 }}{\lambda\|A\|}.
		\end{align*}
		Hence, the  second inequality in \eqref{ineqlmk} follows by using the definitions of $N_0$ and $\hat N_0$ given in \eqref{N0} and \eqref{def:Nhat}, respectively, and that $\sigma^+(A)\leq \|A\|$. \end{proof}
	
	Now we are ready to proof of Theorem~\ref{mainprop2}.	
	
	\vspace{3mm}	
	\noindent{\bf Proof of Theorem~\ref{mainprop2}}: 
	The first statement follows from the first one in Theorem~\ref{mainprop1} c). 
	Now, using  \eqref{def:wkpkhat}, Proposition~\ref{prop:auxbounded}, the triangle inequality for norms and the fact that $\theta \in (0,1]$, we obtain
	\[
	c\|A\hat z_k-b\|\leq \|\hat \lm_k\| + (1-\theta)\|\lm_{k-1}\|\leq 2N_0,
	\]
	where $N_0$ is as in \eqref{N0}. Hence,  \eqref{eq:feasib3}  immediately follows.\qed

\section{Proof of Lemma~\ref{pr:aux-new2}}\label{Sec: proofAuxlemma}
	The main goal of this section is to prove Lemma~\ref{pr:aux-new2}.
	
	Before giving its proof, we state and prove two technical results. The first one essentially describes equivalent
	but useful ways of expressing the inclusion
	in \eqref{GIPPinc_ineq}.
	The  second one establishes an intermediate
	bound used in the proof of Lemma \ref{pr:aux-new2}.
	
	
	\begin{lemma} \label{lem:appendix-akThetak} Let $\{(z_k,\lm_k,v_k,\varepsilon_k)\}$ be generated by  the static $\theta$-IPAAL method, let  $\Delta z_k$, $\Delta \lm_k$ and $r_k$ be as in  \eqref{def:Delta}, and define $\Delta v_{k+1}:=v_{k+1}-v_{k}$ and		
	\begin{equation}\label{def:ak}
		a_k(\cdot)  := \frac{1}2\|\cdot-(z_k+v_{k+1})\|^2 - \frac{1}{2}\|\cdot-(z_{k-1}+v_k)\|^2 + \lambda(1-\theta) \Inner{\Delta\lm_k}{A\cdot-b }.
		\end{equation}
		Then, for every $k\geq 1$, the following statements hold:
		\begin{itemize}
			\item[a)] $a_k$ is an affine function whose gradient $\nabla a_k$ is given by
			\begin{equation}\label{eq:nablaqk}
			\nabla a_k  = -  \Delta z_{k} -\Delta v_{k+1} + \lambda(1-\theta)A^*\Delta\lm_k; 
			\end{equation}
			
			\item[b)] the following inclusions hold
			\[
			0 \in \partial_{\varepsilon_{k}}  
			\left[\lam  {\cal L}^\theta_\pen(\cdot,\lm_{k-1} ) +  \frac{1}2\|\cdot-(z_{k-1}+v_k)\|^2\right](z_{k}),
			\]
			\[
			-\nabla a_k  \in \partial_{\varepsilon_{k+1}}  \left[\lam  {\cal L}^\theta_\pen(\cdot,\lm_{k-1} ) +  \frac{1}2\|\cdot-(z_{k-1}+v_k)\|^2\right](z_{k+1}).
			\]
	\end{itemize}
		
	\end{lemma}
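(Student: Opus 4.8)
The plan is to derive everything from the approximation criterion \eqref{GIPPinc_ineq}, combined with two elementary facts about the $\varepsilon$-subdifferential of a convex function: (i) it is unchanged when a constant is added to the function; and (ii) if $a$ is an affine function with (constant) gradient $\nabla a$, then $u\in\partial_\varepsilon(\Phi+a)(z)$ if and only if $u-\nabla a\in\partial_\varepsilon\Phi(z)$. Both are immediate from the definition \eqref{def:epsSubdiff}.

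For statement a), I would simply expand the two quadratic terms in \eqref{def:ak}: writing $\frac12\|\cdot-(z_k+v_{k+1})\|^2-\frac12\|\cdot-(z_{k-1}+v_k)\|^2$ and noting that the $\frac12\|\cdot\|^2$ contributions cancel, this difference is affine with gradient $-[(z_k+v_{k+1})-(z_{k-1}+v_k)]=-\Delta z_k-\Delta v_{k+1}$; adding the gradient $\lambda(1-\theta)A^*\Delta\lm_k$ of the linear term $\lambda(1-\theta)\langle\Delta\lm_k,A\cdot-b\rangle$ yields \eqref{eq:nablaqk}.

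For the first inclusion in b), I would start from \eqref{GIPPinc_ineq} at iteration $k$, namely $v_k\in\partial_{\varepsilon_k}(\lambda{\cal L}^\theta_\pen(\cdot;\lm_{k-1})+\frac12\|\cdot-z_{k-1}\|^2)(z_k)$, and rewrite it, directly from \eqref{def:epsSubdiff}, as $0\in\partial_{\varepsilon_k}(\lambda{\cal L}^\theta_\pen(\cdot;\lm_{k-1})+\frac12\|\cdot-z_{k-1}\|^2-\langle v_k,\cdot\rangle)(z_k)$. The completing-the-square identity $\frac12\|u-z_{k-1}\|^2-\langle v_k,u\rangle=\frac12\|u-(z_{k-1}+v_k)\|^2-\langle v_k,z_{k-1}\rangle-\frac12\|v_k\|^2$ shows that the function inside differs from $\lambda{\cal L}^\theta_\pen(\cdot;\lm_{k-1})+\frac12\|\cdot-(z_{k-1}+v_k)\|^2$ only by an additive constant, so fact (i) gives the first inclusion. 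For the second inclusion, I would apply the same rewriting to \eqref{GIPPinc_ineq} at iteration $k+1$ to get $0\in\partial_{\varepsilon_{k+1}}(\lambda{\cal L}^\theta_\pen(\cdot;\lm_k)+\frac12\|\cdot-(z_k+v_{k+1})\|^2)(z_{k+1})$, then use the identity ${\cal L}^\theta_\pen(\cdot;\lm_k)={\cal L}^\theta_\pen(\cdot;\lm_{k-1})+(1-\theta)\langle\Delta\lm_k,A\cdot-b\rangle$ (immediate from \eqref{lagrangian2} since $\Delta\lm_k=\lm_k-\lm_{k-1}$) to recognize that $\lambda{\cal L}^\theta_\pen(\cdot;\lm_k)+\frac12\|\cdot-(z_k+v_{k+1})\|^2$ equals $\big[\lambda{\cal L}^\theta_\pen(\cdot;\lm_{k-1})+\frac12\|\cdot-(z_{k-1}+v_k)\|^2\big]+a_k(\cdot)$ with $a_k$ exactly as in \eqref{def:ak}. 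Since $a_k$ is affine with constant gradient $\nabla a_k$ by part a), fact (ii) converts $0\in\partial_{\varepsilon_{k+1}}(\Phi+a_k)(z_{k+1})$ into $-\nabla a_k\in\partial_{\varepsilon_{k+1}}\Phi(z_{k+1})$, which is the claimed inclusion.

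The argument is essentially bookkeeping, so I do not anticipate a serious obstacle; the only points that require care are verifying that the leftover terms, after substituting the multiplier identity, reassemble precisely into $a_k$ as \emph{defined} in \eqref{def:ak} (rather than merely up to a constant), and keeping the two $\varepsilon$-subdifferential manipulations straight so that the $\varepsilon$-level is preserved throughout.
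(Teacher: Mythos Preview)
Your proposal is correct and follows essentially the same route as the paper's proof: both derive part a) by direct expansion, obtain the first inclusion in b) by absorbing the linear term $\langle v_k,\cdot\rangle$ into a shifted quadratic center, establish the exact identity $\lam{\cal L}^\theta_\pen(\cdot,\lm_k)+\frac12\|\cdot-(z_k+v_{k+1})\|^2=[\lam{\cal L}^\theta_\pen(\cdot,\lm_{k-1})+\frac12\|\cdot-(z_{k-1}+v_k)\|^2]+a_k(\cdot)$, and then subtract the affine $a_k$ from the $\varepsilon$-subdifferential. The paper is simply more terse about the bookkeeping you spell out.
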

	
	\begin{proof}
		a)	This statement follows trivially from the definition of $a_k$ given in  \eqref{def:ak} and the relations in  \eqref{def:Delta}.
		
		b) In view of \eqref{def:epsSubdiff}, it is immediate to see that the inclusion in \eqref{GIPPinc_ineq} is equivalent to the first one in b). From the definitions of $ {\cal L}^\theta_\pen$ and $a_k$ given in \eqref{lagrangian2} and \eqref{def:ak}, respectively, we see that
		\[
		\lam  {\cal L}^\theta_\pen(\cdot,\lm_{k}) +  \frac{1}2\|\cdot-(z_{k}+v_{k+1})\|^2 =
		\lam  {\cal L}^\theta_\pen(\cdot,\lm_{k-1} ) +  \frac{1}2\|\cdot-(z_{k-1}+v_k) \|^2 + a_k(\cdot).
		\]
		It follows from this relation and the first inclusion of b) with $k=k+1$ that 
		\begin{equation*}
		0  \in \partial_{\varepsilon_{k+1}} \left[ \lam  {\cal L}^\theta_\pen(\cdot,\lm_{k-1} ) +  \frac{1}2\|\cdot-(z_{k-1}+v_k)\|^2 + a_k(\cdot)
		\right] (z_{k+1}).
		\end{equation*}
		Since $a_k$ is an affine function, the latter inclusion easily implies that the second one of b) holds.\end{proof}
	

	\begin{lemma}Let $\{(z_k,\lm_k,v_k,\varepsilon_k)\}$ be generated by  the static $\theta$-IPAAL method and consider   $\Delta z_k$, $\Delta \lm_k$ and $r_k$  as in  \eqref{def:Delta}. Then, for every $k\geq 1$, the following inequality holds 
			\begin{align}\label{def:Thetak}
			\Theta_k:=& \frac{c \lambda}{1+\tau_\theta}  \|A\Delta z_{k+1}\|^2+\lambda(1-\theta) \inner{\Delta \lm_{k}}{A\Delta z_{k+1}}
			\leq \left[ \frac{2\tau_\theta(1+\sigma)^2}{\tau_\theta+1} + 2\sigma(1+\sigma) +\frac{(\tau_\theta+1)\sigma^2}{\tau_\theta}\right] \|r_{k+1}\|^2\nonumber \\[3mm]
			&+\left[\frac{\sigma(1+\sigma)}{2} +\frac{(\tau_\theta+1)\sigma^2}{2\tau_\theta}\right]\left[\|r_{k}\|^2-\|r_{k+1}\|^2\right]+\frac{1}{2}\left[\|\Delta z_{k}\|^2-\|\Delta z_{k+1}\|^2\right].
			\end{align}
	\end{lemma}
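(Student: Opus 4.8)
My plan is to extract the inequality from the two $\varepsilon$-subgradient inclusions in Lemma~\ref{lem:appendix-akThetak}(b), the strong convexity of the prox-$\theta$-AL subproblems, and the stopping criterion \eqref{GIPPinc_ineq}. Write $\tilde\Psi_k := \lambda{\cal L}^\theta_\pen(\cdot,\lm_{k-1}) + \tfrac12\|\cdot - z_{k-1}\|^2$, so that the inclusion in \eqref{GIPPinc_ineq} reads $v_k \in \partial_{\varepsilon_k}\tilde\Psi_k(z_k)$. Since $f(\cdot)+\tfrac{m}{2}\|\cdot\|^2$ is convex on $\dom h$ (a consequence of \eqref{lowerCurvature-m} already noted in Subsection~\ref{subsec:assump-Motivation}) and $\tau_\theta=\lambda m$, the function $\tilde\Psi_k$ is the sum of a convex function (which contains the penalty term $\tfrac{\pen}{2}\|A\cdot-b\|^2$) and $\tfrac{1-\tau_\theta}{2}\|\cdot-z_{k-1}\|^2$. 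I would first invoke the strongly-convex prox estimate furnished by the ACG method for the Step~1 output $(z_k,v_k,\varepsilon_k)$ — the same device used in the proof of Proposition~\ref{prop:constante0} — in the form: for all $z$,
\[
\tilde\Psi_k(z)\ \ge\ \tilde\Psi_k(z_k) + \inner{v_k}{z-z_k} + \tfrac{1-\tau_\theta}{2}\|z-z_k\|^2 - \varepsilon_k,
\]
and the analogous inequality for $\tilde\Psi_{k+1}$ and $(z_{k+1},v_{k+1},\varepsilon_{k+1})$.

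The next step is to evaluate these at $z=z_{k+1}$ and $z=z_k$, respectively, and expand both sides using the explicit form of ${\cal L}^\theta_\pen$ in \eqref{lagrangian2}, crucially keeping the penalty contribution $\tfrac{\pen}{2}\|A\cdot-b\|^2$ explicit and using the identity $\|Az_{k+1}-b\|^2 - \|Az_k-b\|^2 = 2\inner{Az_k-b}{A\Delta z_{k+1}} + \|A\Delta z_{k+1}\|^2$. This is what produces $\|A\Delta z_{k+1}\|^2$ with a favourable sign; then, substituting $\pen(Az_k-b)=\Delta\lm_k+\theta\lm_{k-1}$ (from \eqref{def:multiplier}) and recombining the resulting multiplier-dependent pieces via $\Delta\lm_k+\lm_{k-1}=\lm_k$ and \eqref{eq:deltalambda}, the term $\inner{\Delta\lm_k}{A\Delta z_{k+1}}$ of $\Theta_k$ emerges, and the coefficient $1/(1+\tau_\theta)$ is precisely the fraction of the $\tfrac{\pen}{2}\|A\cdot-b\|^2$ curvature that survives this bookkeeping. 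The prox-quadratic differences $\tfrac12\|z_{k+1}-z_{k-1}\|^2-\tfrac12\|z_k-z_{k-1}\|^2$ and $-\tfrac12\|z_{k+1}-z_k\|^2$ (one from each inequality), after inserting $v_j=\Delta z_j-r_j$, supply the telescoping combinations $\|r_k\|^2-\|r_{k+1}\|^2$ and $\|\Delta z_k\|^2-\|\Delta z_{k+1}\|^2$ that appear on the right-hand side; throughout, I would use the formula for $\nabla a_k$ in \eqref{eq:nablaqk} to rewrite $a_k(z_{k+1})-a_k(z_k)=\inner{\nabla a_k}{\Delta z_{k+1}}$ consistently.

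To finish, I would clean the right-hand side: using the sharpened consequences $\|v_j\|\le\sigma\|r_j\|$ and $\|v_j\|^2+2\varepsilon_j\le\sigma^2\|r_j\|^2$ of \eqref{GIPPinc_ineq}, the bound $\|\Delta z_j\|\le(1+\sigma)\|r_j\|$ from \eqref{ineq:deltazkrk}, and Young's inequality on each surviving cross term ($\inner{v_k}{\Delta z_{k+1}}$, $\inner{v_{k+1}}{\Delta z_{k+1}}$, $\inner{\Delta z_k}{\Delta z_{k+1}}$, $\inner{r_k}{\Delta z_{k+1}}$) with the weights $\tau_\theta+1$, $1/\tau_\theta$, $1$ read off from the target constants, everything collapses to a combination of $\|r_{k+1}\|^2$, $\|r_k\|^2-\|r_{k+1}\|^2$ and $\|\Delta z_k\|^2-\|\Delta z_{k+1}\|^2$ with exactly the coefficients stated, the spurious $\|\Delta\lm\|^2$ and $\|A\Delta z\|^2$ terms cancelling.

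The hard part is the middle step. One must keep the penalty term alive through the whole computation — a naive addition of the two subgradient inequalities cancels $\tfrac{\pen}{2}\|A\Delta z_{k+1}\|^2$ and loses the $\|A\Delta z_{k+1}\|^2$ contribution to $\Theta_k$ entirely — and then distribute the available curvature so that exactly $\tfrac{\pen\lambda}{1+\tau_\theta}\|A\Delta z_{k+1}\|^2$ is retained while the multiplier-dependent cross terms $\inner{\lm_{k-1}}{A\Delta z_{k+1}}$ regroup into the single term $\inner{\Delta\lm_k}{A\Delta z_{k+1}}$; once that is arranged, the remainder is routine Young's-inequality accounting with the relations in \eqref{def:Delta}--\eqref{ineq:deltazkrk}.
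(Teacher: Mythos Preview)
Your plan has a genuine gap at its starting point. The inequality you write down,
\[
\tilde\Psi_k(z)\ \ge\ \tilde\Psi_k(z_k) + \inner{v_k}{z-z_k} + \tfrac{1-\tau_\theta}{2}\|z-z_k\|^2 - \varepsilon_k,
\]
is not a valid consequence of $v_k\in\partial_{\varepsilon_k}\tilde\Psi_k(z_k)$: for $\varepsilon$-subdifferentials one cannot recover the full strong-convexity modulus without paying the trade-off recorded in Lemma~\ref{lem:proxinclusion} (a factor $1/(1+\tau)$ on the quadratic and $(1+\tau^{-1})$ on the error). The device used in the proof of Proposition~\ref{prop:constante0} is Lemma~\ref{lem:auxNewNest2}, which is a different statement and does not yield what you wrote. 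More importantly, your starting inequality retains only the Euclidean modulus $1-\tau_\theta$ from the prox term and drops the $c\lam A^*A$ part of the curvature entirely. Once that is dropped, no amount of ``keeping the penalty term alive'' in the function values can produce the term $\tfrac{c\lam}{1+\tau_\theta}\|A\Delta z_{k+1}\|^2$: as you yourself note, adding the two inequalities cancels the penalty function values, and there is no residual $\|A\Delta z_{k+1}\|^2$ left anywhere.

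The paper's mechanism is different in two essential respects. First, it works with the \emph{shifted} function $\psi_k:=\lam{\cal L}^\theta_c(\cdot,\lm_{k-1})+\tfrac12\|\cdot-(z_{k-1}+v_k)\|^2$, so that Lemma~\ref{lem:appendix-akThetak}(b) gives two inclusions for the \emph{same} function ($0\in\partial_{\varepsilon_k}\psi_k(z_k)$ and $-\nabla a_k\in\partial_{\varepsilon_{k+1}}\psi_k(z_{k+1})$); when these are combined the function values cancel cleanly and only the quadratic lower bounds and $\inner{\nabla a_k}{\Delta z_{k+1}}$ survive. Second, the quadratic lower bound comes from Lemma~\ref{lem:proxinclusion} applied with the full seminorm $Q=(1-\lam m)I+c\lam A^*A$ and $\tau=\tau_\theta$, which yields $\tfrac{1}{2(1+\tau_\theta)}\|\Delta z_{k+1}\|_Q^2=\tfrac{1-\lam m}{2(1+\tau_\theta)}\|\Delta z_{k+1}\|^2+\tfrac{c\lam}{2(1+\tau_\theta)}\|A\Delta z_{k+1}\|^2$; this is where the coefficient $1/(1+\tau_\theta)$ and the term $\|A\Delta z_{k+1}\|^2$ actually originate. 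After that step the estimate of the right-hand side proceeds essentially as you outlined, via \eqref{ineq:deltazkrk}, $\|v_j\|\le\sigma\|r_j\|$, $2\varepsilon_j\le\sigma^2\|r_j\|^2$, and Young's inequality.
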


		\begin{proof}
				 For every $k$, let us  consider the function 
		\begin{align}\label{def:psik}
		\psi_k&:=\lam  {\cal L}^\theta_\pen(\cdot,\lm_{k-1} ) + \frac{1}2\|\cdot-(z_{k-1}+v_k)\|^2.
		\end{align} Note that, in view of {\bf (A2)},    \eqref{lowerCurvature-m},  \eqref{def:psik},  and the definition of $ {\cal L}^\theta_\pen$  in \eqref{lagrangian2}, we obtain   $\psi_k  - (1/2)\|\cdot\|_Q^2$ is convex where $Q:= (1-\lambda m)I + c\lambda A^*A\in S^n_ {++}$. Hence, in view of  Lemma~\ref{lem:appendix-akThetak} b),  using  Lemma~\ref{lem:proxinclusion} twice, first with $\psi=\psi_k$, $\xi=1$, $(y,v,\eta)=(z_k,0,\varepsilon_k)$, and   $\tau=\tau_\theta$, and second with the same
		$\psi$, $\xi$, and $\tau$ but 
		$(y,v,\eta)=(z_{k+1},-\nabla a_k,\varepsilon_{k+1})$
		where $a_k$ is as in \eqref{def:ak}, we obtain
		\begin{align}
		\psi_k(u)&\geq \psi_k(z_{k})+\frac{1}{2(1+\tau_\theta)}\|u-z_{k}\|_Q^2-(1+\tau_\theta^{-1})\varepsilon_{k},\\ \psi_k(u')&\geq  \psi_k(z_{k+1}) +\inner{-\nabla a_k}{u'-z_{k+1}}+\frac{1}{2(1+\tau_\theta)}\| u'-z_{k+1}\|_Q^2-(1+\tau_\theta^{-1})\varepsilon_{k+1},
		\end{align}
		for all $u, u'\in \Re^n$.
		Adding both inequalities  with $u=z_{k+1}$ and $u'=z_k$, and using the  relation in \eqref{eq:nablaqk} and the definition of $Q$ given above, we conclude that 
		\begin{align*}
		&(1+\tau_\theta^{-1})(\varepsilon_k+\varepsilon_{k+1}) \geq \inner{\nabla a_k}{\Delta z_{k+1}}+ \frac{\|\Delta z_{k+1}\|_Q^2}{1+\tau_\theta} \\[3mm]
		= &-\inner{\Delta z_{k}}{\Delta z_{k+1}}- \inner{\Delta v_{k+1}}{\Delta z_{k+1}} +\lambda(1-\theta) \inner{\Delta \lm_{k}}{A\Delta z_{k+1}} 
		+ \frac{1-\lam m }{1+\tau_\theta}\|\Delta z_{k+1}\|^2
		+ \frac{c \lambda}{1+\tau_\theta}  \|A\Delta z_{k+1}\|^2. 
		\end{align*}
	Rewriting this inequality and using the definition of $\Theta_k$ in \eqref{def:Thetak}, we obtain
		\begin{align}
		\Theta_k&= \frac{c \lambda}{1+\tau_\theta}  \|A\Delta z_{k+1}\|^2+\lambda(1-\theta) \inner{\Delta \lm_{k}}{A\Delta z_{k+1}} \nonumber\\[3mm]
		&\leq \inner{\Delta z_{k}}{\Delta z_{k+1}}-\frac{1-\lam m }{\tau_\theta+1}\|\Delta z_{k+1}\|^2+ \inner{\Delta v_{k+1}}{\Delta z_{k+1}}+\frac{\tau_\theta+1}{\tau_\theta}(\varepsilon_k+\varepsilon_{k+1}).\label{def:Thetak2}
		\end{align}
		We now proceed to estimate the right-hand side of the last inequality. Using that $\inner{u}{\tilde u}\leq (\|u\|^2+\|\tilde u\|^2)/2$ for every $u,\tilde u\in \Re^n$,  \eqref{ineq:deltazkrk} and that $\tau_\theta=\lambda m$, we have 
		\begin{align*}
		\inner{\Delta z_{k}}{\Delta z_{k+1}}-\frac{1-\lam m }{\tau_\theta+1}\|\Delta z_{k+1}\|^2&\leq \frac{\|\Delta z_k\|^2}{2}+\frac{\|\Delta z_{k+1}\|^2}{2}-\frac{1-\tau_\theta }{\tau_\theta+1}\|\Delta z_{k+1}\|^2\\[3mm] 
		&=\frac{\|\Delta z_k\|^2-\|\Delta z_{k+1}\|^2}{2}+\frac{2\tau_\theta}{\tau_\theta+1}\|\Delta z_{k+1}\|^2\\[3mm]
		&\leq\frac{\|\Delta z_k\|^2-\|\Delta z_{k+1}\|^2}{2}+\frac{2\tau_\theta(1+\sigma)^2}{\tau_\theta+1}\|r_{k+1}\|^2.
		\end{align*}
		Using Cauchy-Schwarz and triangle inequalities,   the inequality in \eqref{GIPPinc_ineq} combined with the definitions of $\Delta z_k$ and $r_k$ in \eqref{def:Delta}, and \eqref{ineq:deltazkrk}, we obtain
		\begin{align*}
		\inner{\Delta v_{k+1}}{\Delta z_{k+1}} &\leq ( \|v_{k+1}\| + \|v_k\| ) \|\Delta z_{k+1}\|
		\le \sigma  (\|r_{k+1}\| + \|r_k\| ) \|\Delta z_{k+1}\|\\[3mm]
		&\le \sigma(1+\sigma)  (\|r_{k+1}\| + \|r_k\| ) \|r_{k+1}\|\leq \frac{\sigma(1+\sigma)\|r_k\|^2}{2}+\frac{3\sigma(1+\sigma)\|r_{k+1}\|^2}{2}\\[3mm]
		&=\frac{\sigma(1+\sigma)}{2}\left[\|r_k\|^2-\|r_{k+1}\|^2\right]+2\sigma(1+\sigma)\|r_{k+1}\|^2,
		\end{align*}
where the last inequality is due to $ab\leq  (a^2+b^2)/2$, $\forall a,b \in \Re$.
		The inequality in \eqref{GIPPinc_ineq} combined with the definitions of $\Delta z_k$ and $r_k$ in \eqref{def:Delta}, also yields
		\begin{align*}
		\varepsilon_k+\varepsilon_{k+1}\leq \frac{\sigma^2\|r_k\|^2}{2}+\frac{\sigma^2\|r_{k+1}\|^2}{2}= \sigma^2\|r_{k+1}\|^2+\frac{\sigma^2}{2}\left[\|r_{k}\|^2-\|r_{k+1}\|^2\right].
		\end{align*}
		Hence, in view of  \eqref{def:Thetak2}  and  the above inequalities, we obtain
		\begin{align*}
		\Theta_k&\leq\frac{1}{2}\left[\|\Delta z_k\|^2-\|\Delta z_{k+1}\|^2\right]+\frac{2\tau_\theta (1+\sigma)^2}{\tau_\theta+1}\|r_{k+1}\|^2+\frac{\sigma(1+\sigma)}{2}\left[\|r_k\|^2-\|r_{k+1}\|^2\right]+2\sigma(1+\sigma)\|r_{k+1}\|^2\\[3mm]
		&+\frac{(\tau_\theta+1)\sigma^2}{\tau_\theta}\|r_{k+1}\|^2+\frac{(\tau_\theta+1)\sigma^2}{2\tau_\theta}\left[\|r_{k}\|^2-\|r_{k+1}\|^2\right],
		    \end{align*}
		which after simple algebraic manipulations proves the desired inequality.\end{proof}
\vspace{2mm}		
		
	We are now ready to prove Lemma~\ref{pr:aux-new2}.
	
	\vgap
	
	\noindent{\bf Proof of Lemma~\ref{pr:aux-new2}:}
	In view of \eqref{eq:deltalambda} and the definition of $\Theta_k$ in \eqref{def:Thetak}, we obtain  
	   \begin{align*}
	   \Theta_k &= \frac{c\lambda}{1+\tau_\theta}  \|A\Delta z_{k+1}\|^2+\lambda(1-\theta) \inner{\Delta \lm_{k}}{A\Delta z_{k+1}}\\[3mm]
		&= \frac{\lambda}{c} \left(\frac{1}{1+\tau_\theta}\|\Delta \lm_{k+1} - (1-\theta) \Delta \lm_{k} \|^2 
		+ (1-\theta)\inner{\Delta \lm_{k}} {\Delta \lm_{k+1} - (1-\theta) \Delta \lm_{k}}\right) \\[3mm]
		&=\frac{\lambda}{c(1+\tau_\theta)} \left(\|\Delta \lm_{k+1}\|^2 
		-\tau_\theta(1-\theta)^2 \|\Delta \lm_k\|^2 
		+ (\tau_\theta-1)(1-\theta)\inner{\Delta\lm_k}{\Delta \lm_{k+1}}\right).
		\end{align*}
		Hence, using Cauchy-Schwarz inequality and the facts that $\tau_\theta \in (0,1)$ and $ab\geq -(a^2+b^2)/2$ for all $a,b \in \Re$, we obtain
		\begin{align*}
		\Theta_k &\geq\frac{\lambda}{c(1+\tau_\theta)} \left( \|\Delta \lm_{k+1}\|^2 -\tau_\theta(1-\theta)^2\|\Delta \lm_{k}\|^2-
		\frac{(1-\tau_\theta)}{2} \|\Delta \lm_{k+1}\|^2-\frac{ (1-\tau_\theta) (1-\theta)^2 }{2} \|\Delta \lm_{k}\|^2  \right) \\[3mm]
		&= \frac{\lambda}{c} \left(
		\frac{1}{2} \|\Delta \lm_{k+1}\|^2-\frac{(1-\theta)^2 }{2} \|\Delta \lm_{k}\|^2  \right) = \frac{\lambda}{c} \left(
		\frac{\theta(2-\theta)}{2} \|\Delta \lm_{k+1}\|^2+\frac{(1-\theta)^2}{2}\left[ \|\Delta \lm_{k+1}\|^2-\|\Delta \lm_{k}\|^2 \right]  \right). \end{align*}
		The conclusion of the lemma now follows by combining  the above inequality with  \eqref{def:Thetak}. \qed

\section{Numerical experiments\label{sec:numerical}}

This section presents computational results to
illustrate  the performance
of the $\theta$-IPAAL method for different values
of the parameter $\theta$. The computational results are limited
only to a class of linearly constrained
quadratic matrix (LCQM) problems
but they should provide a good
indication of how promising the $\theta$-IPAAL
is as the parameter $\theta$  decreases towards
zero.

 In order to describe  the  LCQM  problems considered here, let  $l,n\in\mathcal{N}$,  $\alpha_{1},\alpha_{2}\in\Re_{++}$,  $b,d \in\Re^{l}$, matrices $\{A_i\}_{i = 1}^{l},\{B_j\}_{j = 1}^{n},\{C_i\}_{i = 1}^{l} \subseteq \Re^{n \times n}$, and a positive diagonal matrix $D\in\Re^{n\times n}$ be given and define the  
linear operators ${\cal A}:S_{+}^{n}\mapsto\Re^{l}$ , ${\cal B}:S_{+}^{n}\mapsto\Re^{n}$,
and ${\cal C}:S_{+}^{n}\mapsto\Re^{l}$  by
\begin{align*}
\left[{\cal A}(z)\right]_{i}  =\left\langle A_{i},z\right\rangle _{F},\quad 
\left[{\cal B}(z)\right]_{j}  =\left\langle B_{j},z\right\rangle _{F},\quad 
\left[{\cal C}(z)\right]_{i}  =\left\langle C_{i},z\right\rangle _{F},\qquad \forall i=1,\ldots,l, \; \forall  j=1,\ldots, n.
\end{align*}
The LCQM problem is
\begin{align}\label{LCQP1}
\min_{z}\quad  & \frac{\alpha_{1}}{2}\|{\cal C}(z)-d\|^{2}-\frac{\alpha_{2}}{2}\|D{\cal B}(z)\|^{2}\\
\text{s.t.}  \quad & {\cal A}(z)=b, \quad  z\in P_{n}\nonumber,
\end{align}
where $P_{n}=\{z\in S_{+}^{n}:{\rm tr}\, z=1\}$ denotes the $n$--dimensional
spectraplex.
Note that \eqref{LCQP1} can be put in the setting of  \eqref{optl0} by considering
\begin{align*}
  f(z)=\frac{\alpha_{1}}{2}\|{\cal C}(z)-d\|^{2}-\frac{\alpha_{2}}{2}\|D{\cal B}(z)\|^{2},\quad h(z)=\delta_{P_{n}}(z).
\end{align*}

 In the numerical experiments, the entries of  $A_{i},B_{j},C_{i},b$, and $d$ (resp.,
$D$) were generated by sampling from the uniform distribution ${\cal U}[0,1]$
(resp., ${\cal U}[1,1000]$) with  5.0\% and 1.0\% of the entries of the matrices $A_{i},B_{j},$ and $C_{i}$ being nonzero when $(l,n) = (5,20)$ and $(l,n)=(25,100)$, respectively.
The scalars $\alpha_{1},\alpha_{2}\in\Re_{++}$ are selected such that the  pair of lower and upper curvatures $(m,L)$  satisfies 
$L=\lambda_{\max}(\nabla^{2}f)$ and $-m=\lambda_{\min}(\nabla^{2}f)$. In particular, the inequalities \eqref{gradLips} and \eqref{lowerCurvature-m} are satisfied.


The numerical  experiments were performed using  MATLAB 2019b and a MacOS 64-bit machine with  an Intel Core i5 processor and  8 GB of memory.

This section reports computational results for the ``theoretical'' version of the $\theta$-IPAAL method studied in
Subsection~\ref{subsec:dynamicIPAAL} as well as of a more aggressive version
whose motivation is as follows.  Note that the parameters $\tau_\theta$ and $\sigma_\theta$ defined in \eqref{def:tau} and \eqref{def:sigmatheta}, respectively,
and hence the prox stepsize $\lambda$ and the parameter $\sigma$ in
\eqref{definition of sigma} become too small as $\theta$ approaches $0$
(see the two columns under the ``theoretical version'' part of Table~1).
Since the smallness of $\sigma$ and $\lambda$ directly affects the number of ACG and outer iterations performed by
the method, respectively, it is natural to consider its more aggressive variant (referred to here as the constant version of the $\theta$-IPAAL method)
which simply sets these parameters to $\lam=0.5/m$ and $\sigma^2=0.5$.
Even though our theoretical results derived in
Sections~\ref{sec-algorithms} and \ref{sec:Technical-Results} do not apply to this constant version, the computational results reported
in Table~2  and 3 show that
it performs considerably better than its theoretical
counterpart.

The implementation of the theoretical version of
the $\theta$-IPAAL method essentially follows its
description in Subsection~\ref{subsec:dynamicIPAAL} with the exception that
$\sigma$ is set to be $\sigma_\theta$ instead of
the value in \eqref{definition of sigma}.
It is straightforward to see that this choice of
$\sigma$ would still be covered by our analysis if we modify the formula in \eqref{definition of sigma} for $\sigma$ to
$$\sigma= \min\left\{\frac{\tau}{\sqrt{\lambda L_c + 1}}, \sigma_\theta\right\}$$
where $\tau>0$ is a fixed constant.
Choosing $\tau>0$ large enough, it clearly follows
that $\sigma=\sigma_\theta$, and hence that
the latter choice for $\sigma$ is still under the scope
of our analysis.



We now discuss implementation details which are common to
both versions.
The initial multiplier $\lm_0$ is set to be zero and the initial point $z_{0} \in S_{+}^{n}$ is
randomly generated, namely, $z_{0}=\nu\nu^{\top }$
where $\nu:=\tilde{\nu}/\|\tilde{\nu}\|$, $\tilde{\nu}\sim {\cal U}^{n}[0,1]$ with 10.0\% of the entries being nonzero. 
For a given tolerance pair $(\hat\rho,\hat\eta)\in \Re_{++}^2$, the $\theta$-IPAAL method stops when it obtains a point $(\hat{z},\hat{v},\hat{p})$ satisfying 
\begin{equation}
\begingroup\addtolength{\jot}{0.5em}
\begin{aligned}  \hat{v}\in\nabla f(\hat{z})+ \partial h(\hat{z})+A^{*}\hat{p},\quad\frac{\|\hat{v}\|}{\|\nabla f(z_{0})\|+1}\leq\hat{\rho},\quad \frac{\|A\hat{z}-b\|}{\|A z_0-b\|+1}\le\hat{\eta}.
\end{aligned}
\endgroup\label{eq:term_lin_constr}
\end{equation}
The penalty parameter  is chosen as $c=c_{1}:=10^{-5} L/(\|A\|^{2}+1)$ and is  updated according to
$c \leftarrow 5c$,
instead of $c \leftarrow 2c$ as in Step~2 of the $\theta$-IPAAL method.  It can be easily seen that such a choice does not  affect the overall iteration complexity of the $\theta$-IPAAL method.
Both versions use the following 
warm start strategy: instead of starting the call to the static $\theta$-IPAAL method in step~1 of the $\theta$-IPALL method
from the iterate-Lagrangian multiplier pair $(z_0,p_0)=(z_0,0)$ where $z_0$ is as in step 0 of the $\theta$-IPALL method, it initializes from $(z_0,p_0)=(\hat z,\hat \lm)$ where $(\hat z,\hat \lm)$ is the  pair obtained in the previous call to the static $\theta$-IPALL method.








	\begin{table}[H]
\begin{centering}
\begin{tabular}{|>{\centering}m{2cm}|>{\centering}p{2cm}>{\centering}p{2cm}|>{\centering}p{2cm}>{\centering}p{2cm}|}
\hline 
\multirow{2}{2cm}{\centering{}{\footnotesize{}$\theta$}} &
\multicolumn{2}{c|}{{\small{}Theoretical version}} & \multicolumn{2}{c|}{{\small{}Constant version}} \tabularnewline
&{\footnotesize{}$\tau_\theta$} & {\footnotesize{}$\sigma^2$}&{\footnotesize{}$\tau_\theta$} & {\footnotesize{}$\sigma^2$}\tabularnewline
\hline
{\footnotesize{}$1$} & {\footnotesize{}0.5} &  {\footnotesize{}3.75e-02}& {\footnotesize{}0.5}& {\footnotesize{}0.5} \tabularnewline
{\footnotesize{}$0.5$} & {\footnotesize{}0.067} &  {\footnotesize{}5.44e-04}& {\footnotesize{}0.5}& {\footnotesize{}0.5} \tabularnewline
{\footnotesize{}$0.1$} & {\footnotesize{}0.0070} &  {\footnotesize{}8.08e-06}& {\footnotesize{}0.5}& {\footnotesize{}0.5} \tabularnewline
{\footnotesize{}$0$} & {\footnotesize{}*} &  {\footnotesize{}*}& {\footnotesize{}0.5}& {\footnotesize{}0.5} \tabularnewline
\hline
\end{tabular}
\par\end{centering}
\caption{ The values of $\tau_\theta$ and $\sigma_\theta$.}
\end{table}


Table~2 and 3 illustrate the performance of the $\theta$-IPAAL method with different values of the parameter $\theta$  for solving some instances of \eqref{LCQP1}. Table~2 displays the results for  instances with  $(l,n)=(5,20)$ whereas table~3 considers instances with  $(l,n)=(25,100)$.  In these two tables, ``ACG iter" denotes the total number of ACG iterations, ``outer iter" denotes the total number of outer iterations, ``cycle" is the number of cycles, i.e., the number of times that the penalty parameter is updated
(hence, every outer iteration within a cycle uses the same penalty
parameter $c$), and runtime is  in seconds.

\begin{table}[H]
\begin{centering}
\makebox[\textwidth][c]{%
\begin{tabular}{|>{\centering}m{1.4cm}>{\centering}m{0.6cm}|>{\centering}p{1.6cm}
>{\centering}p{1.6cm}>{\centering}p{1cm}>{\centering}p{1.2cm}|>{\centering}p{1.6cm}>{\centering}p{1.6cm}>{\centering}p{1cm}>{\centering}p{1.2cm}|}
\hline 
\multirow{2}{1.4cm}{\centering{}{\footnotesize{}$(L,m)$}} & \multirow{2}{0.6cm}{\centering{}{\footnotesize{}$\theta$}} & 
\multicolumn{4}{c|}{{\small{}Theoretical version}} & \multicolumn{4}{c|}{{\small{}Constant version}} \tabularnewline
& &
{\footnotesize{}ACG iter} & {\footnotesize{}Outer iter} & {\footnotesize{}Cycle} & {\footnotesize{}Runtime} & {\footnotesize{}ACG iter} & {\footnotesize{}Outer iter} & {\footnotesize{}Cycle} & {\footnotesize{}Runtime} \tabularnewline
\hline

{\footnotesize{}} & {\footnotesize{}$1$} & {\footnotesize{}25704} &  {\footnotesize{}16}& {\footnotesize{}13}& {\footnotesize{}37.91}  & {\footnotesize{}6606}& {\footnotesize{}16} & {\footnotesize{}13}  & {\footnotesize{}8.71} \tabularnewline
{\footnotesize{}$(10^{4},1)$} & {\footnotesize{}$0.5$} & {\footnotesize{}7404} &  {\footnotesize{}27}& {\footnotesize{}12}& {\footnotesize{}12.37}  & {\footnotesize{}2639}& {\footnotesize{}15} & {\footnotesize{}12}  & {\footnotesize{}4.14} \tabularnewline
{\footnotesize{}} & {\footnotesize{}$0.1$} & {\footnotesize{}5188} &  {\footnotesize{}129}& {\footnotesize{}11}& {\footnotesize{}7.44}  & {\footnotesize{}1323}& {\footnotesize{}14} & {\footnotesize{}11}  & {\footnotesize{}2.34} \tabularnewline
{\footnotesize{}} & {\footnotesize{}$0$} & {\footnotesize{}*} &  {\footnotesize{}*}& {\footnotesize{}*}& {\footnotesize{}*}  & {\footnotesize{}756}& {\footnotesize{}13} & {\footnotesize{}10}  & {\footnotesize{}1.44} \tabularnewline
\hline
{\footnotesize{}} & {\footnotesize{}$1$} & {\footnotesize{}93443} &  {\footnotesize{}13}& {\footnotesize{}13}& {\footnotesize{}145.78}  & {\footnotesize{}25697}& {\footnotesize{}14} & {\footnotesize{}13}  & {\footnotesize{}45.01} \tabularnewline
{\footnotesize{}$(10^{5},1)$} & {\footnotesize{}$0.5$} & {\footnotesize{}24337} &  {\footnotesize{}13}& {\footnotesize{}12}& {\footnotesize{}33.75}  & {\footnotesize{}10092}& {\footnotesize{}13} & {\footnotesize{}12}  & {\footnotesize{}15.78} \tabularnewline
{\footnotesize{}} & {\footnotesize{}$0.1$} & {\footnotesize{}7662} &  {\footnotesize{}28}& {\footnotesize{}11}& {\footnotesize{}11.92}  & {\footnotesize{}4057}& {\footnotesize{}12} & {\footnotesize{}11}  & {\footnotesize{}6.45} \tabularnewline
{\footnotesize{}} & {\footnotesize{}$0$} & {\footnotesize{}*} &  {\footnotesize{}*}& {\footnotesize{}*}& {\footnotesize{}*}  & {\footnotesize{}2226}& {\footnotesize{}11} & {\footnotesize{}10}  & {\footnotesize{}3.52} \tabularnewline
\hline
{\footnotesize{}} & {\footnotesize{}$1$} & {\footnotesize{}328146} &  {\footnotesize{}13}& {\footnotesize{}13}& {\footnotesize{}458.28}  & {\footnotesize{}94579}& {\footnotesize{}13} & {\footnotesize{}13}  & {\footnotesize{}179.83} \tabularnewline
{\footnotesize{}$(10^{6},1)$} & {\footnotesize{}$0.5$} & {\footnotesize{}89737} &  {\footnotesize{}12}& {\footnotesize{}12}& {\footnotesize{}149.06}  & {\footnotesize{}40578}& {\footnotesize{}12} & {\footnotesize{}12}  & {\footnotesize{}75.18} \tabularnewline
{\footnotesize{}} & {\footnotesize{}$0.1$} & {\footnotesize{}19568} &  {\footnotesize{}12}& {\footnotesize{}11}& {\footnotesize{}32.65}  & {\footnotesize{}17491}& {\footnotesize{}11} & {\footnotesize{}11}  & {\footnotesize{}32.39} \tabularnewline
{\footnotesize{}} & {\footnotesize{}$0$} & {\footnotesize{}*} &  {\footnotesize{}*}& {\footnotesize{}*}& {\footnotesize{}*}  & {\footnotesize{}8005}& {\footnotesize{}10} & {\footnotesize{}10}  & {\footnotesize{}13.23} \tabularnewline
\hline
{\footnotesize{}} & {\footnotesize{}$1$} & {\footnotesize{}327119} &  {\footnotesize{}13}& {\footnotesize{}13}& {\footnotesize{}517.19}  & {\footnotesize{}94613}& {\footnotesize{}13} & {\footnotesize{}13}  & {\footnotesize{}143.58} \tabularnewline
{\footnotesize{}$(10^{7},10)$} & {\footnotesize{}$0.5$} & {\footnotesize{}89983} &  {\footnotesize{}12}& {\footnotesize{}12}& {\footnotesize{}142.12}  & {\footnotesize{}40719}& {\footnotesize{}12} & {\footnotesize{}12}  & {\footnotesize{}69.29} \tabularnewline
{\footnotesize{}} & {\footnotesize{}$0.1$} & {\footnotesize{}19791} &  {\footnotesize{}12}& {\footnotesize{}11}& {\footnotesize{}29.80}  & {\footnotesize{}17977}& {\footnotesize{}11} & {\footnotesize{}11}  & {\footnotesize{}25.74} \tabularnewline
{\footnotesize{}} & {\footnotesize{}$0$} & {\footnotesize{}*} &  {\footnotesize{}*}& {\footnotesize{}*}& {\footnotesize{}*}  & {\footnotesize{}7942}& {\footnotesize{}10} & {\footnotesize{}10}  & {\footnotesize{}11.34} \tabularnewline
\hline
{\footnotesize{}} & {\footnotesize{}$1$} & {\footnotesize{}93835} &  {\footnotesize{}13}& {\footnotesize{}13}& {\footnotesize{}150.35}  & {\footnotesize{}25791}& {\footnotesize{}14} & {\footnotesize{}13}  & {\footnotesize{}38.33} \tabularnewline
{\footnotesize{}$(10^{7},10^2)$} & {\footnotesize{}$0.5$} & {\footnotesize{}24160} &  {\footnotesize{}13}& {\footnotesize{}12}& {\footnotesize{}32.84}  & {\footnotesize{}10113}& {\footnotesize{}13} & {\footnotesize{}12}  & {\footnotesize{}13.98} \tabularnewline
{\footnotesize{}} & {\footnotesize{}$0.1$} & {\footnotesize{}7548} &  {\footnotesize{}28}& {\footnotesize{}11}& {\footnotesize{}10.22}  & {\footnotesize{}4189}& {\footnotesize{}12} & {\footnotesize{}11}  & {\footnotesize{}6.36} \tabularnewline
{\footnotesize{}} & {\footnotesize{}$0$} & {\footnotesize{}*} &  {\footnotesize{}*}& {\footnotesize{}*}& {\footnotesize{}*}  & {\footnotesize{}2226}& {\footnotesize{}11} & {\footnotesize{}10}  & {\footnotesize{}3.91} \tabularnewline
\hline
{\footnotesize{}} & {\footnotesize{}$1$} & {\footnotesize{}26061} &  {\footnotesize{}16}& {\footnotesize{}13}& {\footnotesize{}44.04}  & {\footnotesize{}6552}& {\footnotesize{}16} & {\footnotesize{}13}  & {\footnotesize{}10.24} \tabularnewline
{\footnotesize{}$(10^{7},10^3)$} & {\footnotesize{}$0.5$} & {\footnotesize{}7424} &  {\footnotesize{}27}& {\footnotesize{}12}& {\footnotesize{}11.28}  & {\footnotesize{}2639}& {\footnotesize{}15} & {\footnotesize{}12}  & {\footnotesize{}4.43} \tabularnewline
{\footnotesize{}} & {\footnotesize{}$0.1$} & {\footnotesize{}5208} &  {\footnotesize{}129}& {\footnotesize{}11}& {\footnotesize{}7.20}  & {\footnotesize{}1323}& {\footnotesize{}14} & {\footnotesize{}11}  & {\footnotesize{}1.97} \tabularnewline
{\footnotesize{}} & {\footnotesize{}$0$} & {\footnotesize{}*} &  {\footnotesize{}*}& {\footnotesize{}*}& {\footnotesize{}*}  & {\footnotesize{}756}& {\footnotesize{}13} & {\footnotesize{}10}  & {\footnotesize{}1.43} \tabularnewline
\hline
\end{tabular}}
\par\end{centering}
\caption{Performance of the $\theta$-IPAAL method with $(l,n) = (5,20),\hat{\rho}=\hat{\eta}=10^{-4}$.}
\end{table}

\begin{table}[H]
\begin{centering}
\makebox[\textwidth][c]{%
\begin{tabular}{|>{\centering}m{1.4cm}>{\centering}m{0.6cm}|>{\centering}p{1.6cm}
>{\centering}p{1.6cm}>{\centering}p{1cm}>{\centering}p{1.2cm}|>{\centering}p{1.6cm}>{\centering}p{1.6cm}>{\centering}p{1cm}>{\centering}p{1.2cm}|}
\hline 
\multirow{2}{1.4cm}{\centering{}{\footnotesize{}$(L,m)$}} & \multirow{2}{0.6cm}{\centering{}{\footnotesize{}$\theta$}} & 
\multicolumn{4}{c|}{{\small{}Theoretical version}} & \multicolumn{4}{c|}{{\small{}Constant version}} \tabularnewline
& &
{\footnotesize{}ACG iter} & {\footnotesize{}Outer iter} & {\footnotesize{}Cycle} & {\footnotesize{}Runtime} & {\footnotesize{}ACG iter} & {\footnotesize{}Outer iter} & {\footnotesize{}Cycle} & {\footnotesize{}Runtime} \tabularnewline
\hline

{\footnotesize{}} & {\footnotesize{}$1$} & {\footnotesize{}140390} &  {\footnotesize{}20}& {\footnotesize{}15}& {\footnotesize{}1037.44}  & {\footnotesize{}35221}& {\footnotesize{}20} & {\footnotesize{}15}  & {\footnotesize{}288.64} \tabularnewline
{\footnotesize{}$(10^{4},1)$} & {\footnotesize{}$0.5$} & {\footnotesize{}33794} &  {\footnotesize{}27}& {\footnotesize{}14}& {\footnotesize{}250.59}  & {\footnotesize{}11787}& {\footnotesize{}19} & {\footnotesize{}14}  & {\footnotesize{}106.09} \tabularnewline
{\footnotesize{}} & {\footnotesize{}$0.1$} & {\footnotesize{}10218} &  {\footnotesize{}103}& {\footnotesize{}13}& {\footnotesize{}92.99}  & {\footnotesize{}4054}& {\footnotesize{}19} & {\footnotesize{}13}  & {\footnotesize{}34.83} \tabularnewline
{\footnotesize{}} & {\footnotesize{}$0$} & {\footnotesize{}*} &  {\footnotesize{}*}& {\footnotesize{}*}& {\footnotesize{}*}  & {\footnotesize{}1753}& {\footnotesize{}18} & {\footnotesize{}12}  & {\footnotesize{}15.26} \tabularnewline
\hline
{\footnotesize{}} & {\footnotesize{}$1$} & {\footnotesize{}496375} &  {\footnotesize{}15}& {\footnotesize{}15}& {\footnotesize{}3739.52}  & {\footnotesize{}139589}& {\footnotesize{}16} & {\footnotesize{}15}  & {\footnotesize{}1420.47} \tabularnewline
{\footnotesize{}$(10^{5},1)$} & {\footnotesize{}$0.5$} & {\footnotesize{}125599} &  {\footnotesize{}17}& {\footnotesize{}14}& {\footnotesize{}1223.06}  & {\footnotesize{}55049}& {\footnotesize{}15} & {\footnotesize{}14}  & {\footnotesize{}437.98} \tabularnewline
{\footnotesize{}} & {\footnotesize{}$0.1$} & {\footnotesize{}27887} &  {\footnotesize{}35}& {\footnotesize{}13}& {\footnotesize{}264.72}  & {\footnotesize{}20826}& {\footnotesize{}14} & {\footnotesize{}13}  & {\footnotesize{}174.20} \tabularnewline
{\footnotesize{}} & {\footnotesize{}$0$} & {\footnotesize{}*} &  {\footnotesize{}*}& {\footnotesize{}*}& {\footnotesize{}*}  & {\footnotesize{}6044}& {\footnotesize{}13} & {\footnotesize{}12}  & {\footnotesize{}54.30} \tabularnewline
\hline
{\footnotesize{}} & {\footnotesize{}$1$} & {\footnotesize{}1662990} &  {\footnotesize{}15}& {\footnotesize{}15}& {\footnotesize{}11582.70}  & {\footnotesize{}507773}& {\footnotesize{}15} & {\footnotesize{}15}  & {\footnotesize{}3525.21} \tabularnewline
{\footnotesize{}$(10^{6},1)$} & {\footnotesize{}$0.5$} & {\footnotesize{}431788} &  {\footnotesize{}14}& {\footnotesize{}14}& {\footnotesize{}3383.33}  & {\footnotesize{}212601}& {\footnotesize{}14} & {\footnotesize{}14}  & {\footnotesize{}1500.66} \tabularnewline
{\footnotesize{}} & {\footnotesize{}$0.1$} & {\footnotesize{}90949} &  {\footnotesize{}16}& {\footnotesize{}13}& {\footnotesize{}635.13}  & {\footnotesize{}88880}& {\footnotesize{}13} & {\footnotesize{}13}  & {\footnotesize{}646.18} \tabularnewline
{\footnotesize{}} & {\footnotesize{}$0$} & {\footnotesize{}*} &  {\footnotesize{}*}& {\footnotesize{}*}& {\footnotesize{}*}  & {\footnotesize{}33908}& {\footnotesize{}12} & {\footnotesize{}12}  & {\footnotesize{}224.12} \tabularnewline
\hline
{\footnotesize{}} & {\footnotesize{}$1$} & {\footnotesize{}1705212} &  {\footnotesize{}15}& {\footnotesize{}15}& {\footnotesize{}12073.87}  & {\footnotesize{}508098}& {\footnotesize{}15} & {\footnotesize{}15}  & {\footnotesize{}4126.89} \tabularnewline
{\footnotesize{}$(10^{7},10)$} & {\footnotesize{}$0.5$} & {\footnotesize{}435710} &  {\footnotesize{}14}& {\footnotesize{}14}& {\footnotesize{}3957.91}  & {\footnotesize{}212679}& {\footnotesize{}14} & {\footnotesize{}14}  & {\footnotesize{}1981.36} \tabularnewline
{\footnotesize{}} & {\footnotesize{}$0.1$} & {\footnotesize{}90586} &  {\footnotesize{}16}& {\footnotesize{}13}& {\footnotesize{}1012.90}  & {\footnotesize{}88970}& {\footnotesize{}13} & {\footnotesize{}13}  & {\footnotesize{}869.04} \tabularnewline
{\footnotesize{}} & {\footnotesize{}$0$} & {\footnotesize{}*} &  {\footnotesize{}*}& {\footnotesize{}*}& {\footnotesize{}*}  & {\footnotesize{}34858}& {\footnotesize{}12} & {\footnotesize{}12}  & {\footnotesize{}249.89} \tabularnewline
\hline
{\footnotesize{}} & {\footnotesize{}$1$} & {\footnotesize{}493067} &  {\footnotesize{}15}& {\footnotesize{}15}& {\footnotesize{}4793.98}  & {\footnotesize{}139926}& {\footnotesize{}16} & {\footnotesize{}15}  & {\footnotesize{}978.11} \tabularnewline
{\footnotesize{}$(10^{7},10^2)$} & {\footnotesize{}$0.5$} & {\footnotesize{}125408} &  {\footnotesize{}17}& {\footnotesize{}14}& {\footnotesize{}894.30}  & {\footnotesize{}52874}& {\footnotesize{}15} & {\footnotesize{}14}  & {\footnotesize{}377.65} \tabularnewline
{\footnotesize{}} & {\footnotesize{}$0.1$} & {\footnotesize{}27941} &  {\footnotesize{}35}& {\footnotesize{}13}& {\footnotesize{}194.83}  & {\footnotesize{}20739}& {\footnotesize{}14} & {\footnotesize{}13}  & {\footnotesize{}165.19} \tabularnewline
{\footnotesize{}} & {\footnotesize{}$0$} & {\footnotesize{}*} &  {\footnotesize{}*}& {\footnotesize{}*}& {\footnotesize{}*}  & {\footnotesize{}5952}& {\footnotesize{}13} & {\footnotesize{}12}  & {\footnotesize{}42.42} \tabularnewline
\hline
{\footnotesize{}} & {\footnotesize{}$1$} & {\footnotesize{}139745} &  {\footnotesize{}20}& {\footnotesize{}15}& {\footnotesize{}977.19}  & {\footnotesize{}35227}& {\footnotesize{}20} & {\footnotesize{}15}  & {\footnotesize{}246.07} \tabularnewline
{\footnotesize{}$(10^{7},10^3)$} & {\footnotesize{}$0.5$} & {\footnotesize{}33462} &  {\footnotesize{}27}& {\footnotesize{}14}& {\footnotesize{}236.89}  & {\footnotesize{}11634}& {\footnotesize{}19} & {\footnotesize{}14}  & {\footnotesize{}84.47} \tabularnewline
{\footnotesize{}} & {\footnotesize{}$0.1$} & {\footnotesize{}10075} &  {\footnotesize{}103}& {\footnotesize{}13}& {\footnotesize{}72.31}  & {\footnotesize{}4041}& {\footnotesize{}19} & {\footnotesize{}13}  & {\footnotesize{}29.63} \tabularnewline
{\footnotesize{}} & {\footnotesize{}$0$} & {\footnotesize{}*} &  {\footnotesize{}*}& {\footnotesize{}*}& {\footnotesize{}*}  & {\footnotesize{}1753}& {\footnotesize{}18} & {\footnotesize{}12}  & {\footnotesize{}13.55} \tabularnewline
\hline
\end{tabular}}
\par\end{centering}
\caption{Performance of the $\theta$-IPAAL method with $(l,n) = (25,100),\hat{\rho}=\hat{\eta}=10^{-4}$.}
\end{table}

We conclude from Table~2 and 3  that  the total number of ACG iterations for both versions of the $\theta$-IPAAL method as well as the runtime decrease as $\theta$ approaches zero.
The computational results reported
in Table~2  and 3 show that the constant version
performs considerably better than its theoretical
counterpart. This phenomenon can be attributed to the
sizes of the scalars $\sigma$ and $\lambda$
which directly affect the number of ACG and outer iterations,
respectively.
Finally, even though the theoretical version is undefined for $\theta=0$, the above computational results indicate that its  constant version counterpart with $\theta = 0$ is quite promising.  
\section{Concluding remarks}\label{sec:ConcludingRemarks}

This paper has presented  an inexact proximal accelerated augmented Lagrangian (IPAAL) method, based
on the $\theta$-AL function \eqref{lagrangian2}, for finding
an approximate  stationary point of
the linearly constrained smooth  nonconvex composite optimization problem \eqref{optl0}
where the prox subproblems are inexactly solved by an accelerated composite gradient (ACG) scheme.
It is shown that the $\theta$-IPAAL obtains a
$(\hat\rho,\hat\eta)$-approximate stationary point
(see Definition~\ref{def:stationarypoint})
of \eqref{optl0} in ${\cal O}([1/(\hat\eta\hat\rho^2)]\log(1/\hat\eta))$ ACG iterations.
Moreover, it is also shown that the
previous bound can be improved to ${\cal O}([1/(\sqrt{\hat \eta} \hat \rho^2)]\log(1/\hat\eta))$ under the additional mildly stronger conditions {\bf (B1)--(B3)}.
The above bounds are derived assuming  that the initial point is neither feasible nor the domain of the composite term of the objective function is bounded.

We now make some remarks about the derived complexity bounds
in light of the choice of the parameter $\theta$.
Our complexity bounds are derived under the assumption that
$\theta \in (0,1]$, and hence does not apply to the case
in which $\theta=0$, i.e., the $\theta$-AL function
reduces to the classical quadratic AL function.
It turns out that as $\theta$ approaches zero, the
constant involved in the ${\cal O}(\cdot)$
iteration-complexity bounds explodes to infinity
and the stepsize $\lam$ given by \eqref{definition of sigma} approaches zero.
Hence, the case $\theta=0$ is still an open case
whose resolution possibly requires an insight 
different
than the one used in this paper.

Our analysis has assumed that the pair
$(z_0,\lm_0)$ used as part of the input in step 1 of
$\theta$-IPAAL is always the same pair.
In practice, this pair can be chosen using the
following simple warm
strategy, namely, set it to be the output
$(\hat z,\hat \lm)$ obtained in step 2 of the
$\theta$-IPAAL during its previous loop
when $c$ was $c/2$. An interesting topic for future
research is to analyze the ACG iteration-complexity of the
$\theta$-IPAAL method endowed with this warm strategy.


	\appendix
	\section{Two Technical Results} \label{techresults}
	
	This section contains two technical results concerning some properties of the $\varepsilon$-subdifferential of a convex function perturbed by a prox-term.
	
	The following result is used in the proof of Lemma~\ref{pr:aux-new2}.
	
	\begin{lemma}\label{lem:proxinclusion}
		Assume that $\xi>0$, $\psi \in \bConv{n}$ and $Q \in {\cal S}^n_{++}$ are such that
		$\psi - (\xi/2) \|\cdot\|^2_Q$ is convex and let
		$(y,v,\eta) \in \Re^n \times \Re^n \times  \Re$ be such that $v\in \partial_\eta \psi(y)$.
		Then, 
		for any $\tau>0$,
		
		\[
		\psi(u) \ge \psi(y) + \inner{v}{u-y} - (1+\tau^{-1})\eta+ \frac{\xi}{2(1+\tau)} \|u-y\|_Q^2  \quad \forall u \in \Re^n.
		\]
	\end{lemma}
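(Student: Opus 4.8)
The plan is to exploit the $\varepsilon$-subdifferential inequality and the strong-convexity-type lower bound on $\psi$ separately, and then combine them with a Young-type inequality to absorb a cross term. First I would write down the two facts available: since $v \in \partial_\eta \psi(y)$, the definition \eqref{def:epsSubdiff} gives, for every $u \in \Re^n$, the inequality $\psi(u) \ge \psi(y) + \inner{v}{u-y} - \eta$; and since $\varphi := \psi - (\xi/2)\|\cdot\|_Q^2$ is convex, for every $u$ and every subgradient relation at an intermediate point we may extract a quadratic underestimate of $\psi$. The standard trick is to evaluate both inequalities not at $u$ directly but at a convex combination $y_\lambda := (1-\lambda) y + \lambda u$ for a parameter $\lambda \in (0,1)$ to be optimized.

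Concretely, I would apply the $\varepsilon$-subgradient inequality at $y_\lambda$: $\psi(y_\lambda) \ge \psi(y) + \lambda \inner{v}{u-y} - \eta$. On the other hand, convexity of $\varphi$ gives $\varphi(y_\lambda) \le (1-\lambda)\varphi(y) + \lambda \varphi(u)$, i.e. $\psi(y_\lambda) - \frac{\xi}{2}\|y_\lambda - y\|_Q^2 \le (1-\lambda)\psi(y) - (1-\lambda)\frac{\xi}{2}\|y-y\|_Q^2 + \lambda\psi(u) - \lambda\frac{\xi}{2}\|u-y\|_Q^2$ — wait, more carefully I should expand $\|\cdot\|_Q^2$ around a common reference point so that the quadratic terms combine cleanly. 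Using $\|y_\lambda - y\|_Q^2 = \lambda^2\|u-y\|_Q^2$ and the elementary identity for a seminorm applied to a convex combination, the convexity of $\varphi$ yields $\psi(y_\lambda) \le (1-\lambda)\psi(y) + \lambda \psi(u) - \frac{\xi}{2}\lambda(1-\lambda)\|u-y\|_Q^2$. Combining with the lower bound on $\psi(y_\lambda)$ and dividing by $\lambda$, I get $\psi(u) \ge \psi(y) + \inner{v}{u-y} - \frac{\eta}{\lambda} + \frac{\xi}{2}(1-\lambda)\|u-y\|_Q^2$.

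Finally I would choose $\lambda$ to match the claimed constants: setting $\lambda = \frac{\tau}{1+\tau}$ gives $1/\lambda = 1 + \tau^{-1}$ and $1-\lambda = \frac{1}{1+\tau}$, which reproduces exactly the asserted inequality $\psi(u) \ge \psi(y) + \inner{v}{u-y} - (1+\tau^{-1})\eta + \frac{\xi}{2(1+\tau)}\|u-y\|_Q^2$. The only mildly delicate point — the ``main obstacle'' such as it is — is justifying the convexity expansion $\varphi(y_\lambda) \le (1-\lambda)\varphi(y) + \lambda\varphi(u)$ together with the exact bookkeeping of the $\|\cdot\|_Q^2$ terms, i.e. verifying that $\psi(y_\lambda) \le (1-\lambda)\psi(y) + \lambda\psi(u) - \frac{\xi}{2}\lambda(1-\lambda)\|u-y\|_Q^2$; this is just the well-known fact that a function whose Bregman-type gap dominates $\frac{\xi}{2}\|\cdot\|_Q^2$ satisfies a strengthened Jensen inequality, and it follows by writing $\|y_\lambda - y\|_Q^2$ and $\|y_\lambda - u\|_Q^2$ in terms of $\|u - y\|_Q^2$. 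Everything else is routine algebra, so no step should present a real difficulty.
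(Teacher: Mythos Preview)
Your proof is correct and is actually a cleaner route than the paper's. The paper proceeds by introducing the auxiliary point $\bar y := \argmin \psi_v$ where $\psi_v := \psi - \inner{v}{\cdot}$, uses the strong-convexity lower bound $\psi_v(u) \ge \psi_v(\bar y) + \frac{\xi}{2}\|u-\bar y\|_Q^2$, then combines with $\psi_v(\bar y) \ge \psi_v(y) - \eta$ and a Young-type splitting $\frac{1}{1+\tau}\|u-y\|_Q^2 \le \frac{1}{\tau}\|\bar y - y\|_Q^2 + \|u-\bar y\|_Q^2$ together with the bound $\frac{\xi}{2}\|\bar y - y\|_Q^2 \le \eta$. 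Your interpolation argument---evaluating both the $\varepsilon$-subgradient inequality and the strengthened Jensen inequality at $y_\lambda = (1-\lambda)y + \lambda u$ and then optimizing $\lambda = \tau/(1+\tau)$---reaches the same conclusion without ever introducing $\bar y$ or invoking its existence, so it sidesteps the need to appeal to coercivity/lower-semicontinuity for the minimizer. The identity $\|y_\lambda\|_Q^2 = (1-\lambda)\|y\|_Q^2 + \lambda\|u\|_Q^2 - \lambda(1-\lambda)\|u-y\|_Q^2$ that you flag as the ``main obstacle'' is indeed the only bookkeeping step, and it holds because $Q \in {\cal S}^n_{++}$ makes $\|\cdot\|_Q$ a genuine inner-product norm.
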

	\begin{proof}
		Let $\psi_v := \psi-\inner{v}{\cdot}$.
		The assumptions imply that $\psi_v$ has a unique global minimum $\bar y $
		and that
		\begin{equation}\label{ineq:u}
		\psi_v(u)\ge \psi_v(\bar y)+\frac{\xi}{2}\|u-\bar y\|_Q^2
		\end{equation}
		for every $u \in \Re^n$. Moreover, Since  $ v \in \partial_\eta \psi(y) $, we have 
		$\psi_v(u) \ge \psi_v(y) - \eta$ for every $u \in \Re^n$, and hence that
		\begin{equation}\label{ineq:bar y}
		\psi_v(\bar y) \ge \psi_v(y) - \eta.
		\end{equation}
		The relations in \eqref{ineq:u} and \eqref{ineq:bar y} then imply that for every $u \in \Re^n$,
		\begin{align}
		\psi_v(u)&
		\ge \psi_v(y)-\eta+\frac{\xi}{2}\|u-\bar y\|_Q^2 = \psi_v(y)-\left (\eta+\frac{\xi}{2\tau}\|\bar y-y\|_Q^2 \right) +\frac{\xi}{2}  \left[  \frac{1}{\tau} \|y-\bar y\|_Q^2+\|u-\bar y\|_Q^2 \right] \nonumber \\
		&\ge \psi_v(y)-\eta'+\frac{\xi}{2(1+\tau)}\|u-y\|_Q^2 \label{ineq:u y},
		\end{align}
		where the last inequality is due to the following relations
		\[
		\frac{1}{1+\tau} \|\tilde u+ u'\|^2 \le  \frac{1}{\tau}\|\tilde u\|^2+  \|u'\|^2, \qquad  
		\eta':=\eta+\frac{\xi}{2\tau}\|\bar y-y\|_Q^2.
		\]
		Also, inequality \eqref{ineq:u} with $u=y$
			and the relation in \eqref{ineq:bar y} imply that $(\xi/2) \|\bar y-y\|_Q^2 \le \eta$ and hence that
			$ \eta' \le (1+\tau^{-1})\eta$. 
			The conclusion now follows from \eqref{ineq:u y}, definition of $\psi_v$ and the latter conclusion.	\end{proof}
		
		The following result is used in the proof of Proposition~\ref{prop:constante0}.

		\begin{lemma} \label{lem:auxNewNest2}
			Let proper function $\tilde \phi: \Re^n \to (-\infty,\infty]$,
			scalar $\sigma \in (0,1)$  and
			$(z_0,z_1) \in \Re^n \times     \dom \tilde \phi$ be given, and assume that there exists
			$(v_1,\varepsilon_1)$ such that
			\begin{gather}
			v_1 \in \partial_{\varepsilon_1} \left(\tilde\phi+\frac{1}{2}\|\cdot-z_0\|^2\right) (z_1), 
			\quad  \|v_1\|^2 + 2 \varepsilon_1 \leq\sigma^2 \|v+z_{0}-z_1\|^{2}. \label{Auxeq:prox_incl}  
			\end{gather}
			Then, for every $z\in \Re^n$ and $s>0$, we have
			\[
			\tilde \phi(z_1)+\frac{1}{2} \left[ 1 - \sigma^2 ( 1 + s^{-1}) \right]\|v_1+z_0 - z_1\|^{2}\le \tilde \phi(z) +\frac{s+1}{2} \|z-z_0\|^2.
			\]
		\end{lemma}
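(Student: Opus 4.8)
The plan is to unpack the $\varepsilon_1$-subgradient inclusion in \eqref{Auxeq:prox_incl}, complete the square against the vector $v_1+z_0-z_1$ (the one appearing on the right-hand side of the target estimate), and then absorb the leftover $\|v_1\|^2$ and $\varepsilon_1$ terms using the inequality in \eqref{Auxeq:prox_incl} together with a single application of Young's inequality with parameter $s$. First I would apply \eqref{def:epsSubdiff} to $\Psi:=\tilde\phi+\tfrac12\|\cdot-z_0\|^2$ at $z_1$ and evaluate at an arbitrary $z\in\Re^n$, which, after splitting $\inner{v_1}{z-z_1}=\inner{v_1}{z-z_0}-\inner{v_1}{z_1-z_0}$, gives
\[
\tilde\phi(z_1)\le\tilde\phi(z)+\frac12\|z-z_0\|^2-\inner{v_1}{z-z_0}+\Big(\inner{v_1}{z_1-z_0}-\frac12\|z_1-z_0\|^2\Big)+\varepsilon_1 .
\]
I would also record here that $\varepsilon_1\ge 0$, since $z_1\in\dom\tilde\phi$ and the $\varepsilon_1$-subdifferential of $\Psi$ at $z_1$ is nonempty.

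The key algebraic step is to observe that the parenthesized term equals $\tfrac12\|v_1\|^2-\tfrac12\|v_1+z_0-z_1\|^2$ by completing the square. Writing $w:=v_1+z_0-z_1$ and bounding $-\inner{v_1}{z-z_0}\le \tfrac{s}{2}\|z-z_0\|^2+\tfrac{1}{2s}\|v_1\|^2$ by Young's inequality with parameter $s$, the previous display becomes
\[
\tilde\phi(z_1)\le\tilde\phi(z)+\frac{s+1}{2}\|z-z_0\|^2+\frac{s+1}{2s}\|v_1\|^2+\varepsilon_1-\frac12\|w\|^2 .
\]
Finally, the inequality in \eqref{Auxeq:prox_incl} reads $\|v_1\|^2\le\sigma^2\|w\|^2-2\varepsilon_1$, so $\tfrac{s+1}{2s}\|v_1\|^2+\varepsilon_1\le\tfrac{s+1}{2s}\sigma^2\|w\|^2=\tfrac{\sigma^2(1+s^{-1})}{2}\|w\|^2$ (using $\varepsilon_1\ge 0$); substituting this and transposing the $\|w\|^2$ terms yields exactly $\tilde\phi(z_1)+\tfrac12[1-\sigma^2(1+s^{-1})]\|w\|^2\le\tilde\phi(z)+\tfrac{s+1}{2}\|z-z_0\|^2$, which is the claim.

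There is no genuine obstacle here; the whole argument is elementary manipulation of quadratic forms. The only two points requiring a little care are (i) recognizing that one must complete the square precisely against $v_1+z_0-z_1$ rather than, say, $z_1-z_0$, so that the surviving norm matches both the hypothesis in \eqref{Auxeq:prox_incl} and the quantity in the conclusion, and (ii) choosing the free Young parameter to be exactly $s$ so that the coefficient of $\|z-z_0\|^2$ collapses to $(s+1)/2$. The sign of $\varepsilon_1$ is used only mildly, to drop the term $-\varepsilon_1/s$ that otherwise appears.
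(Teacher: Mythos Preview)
Your proof is correct and follows essentially the same approach as the paper's own proof: unpack the $\varepsilon_1$-subdifferential inequality, split $\inner{v_1}{z-z_1}$ into $\inner{v_1}{z-z_0}$ and $\inner{v_1}{z_0-z_1}$, complete the square to produce $\|v_1+z_0-z_1\|^2$, apply Young's inequality with parameter $s$ to the cross term $\inner{v_1}{z-z_0}$, and finish using the hypothesis $\|v_1\|^2+2\varepsilon_1\le\sigma^2\|v_1+z_0-z_1\|^2$. The only cosmetic difference is that the paper groups the leftover terms directly as $\tfrac12(1+s^{-1})[\|v_1\|^2+2\varepsilon_1]$ before invoking the hypothesis, whereas you substitute $\|v_1\|^2\le\sigma^2\|w\|^2-2\varepsilon_1$ and then drop the resulting $-\varepsilon_1/s$; both yield the same bound.
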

		
		\begin{proof}
			Using the inclusion in \eqref{Auxeq:prox_incl}, the definition of
			$\varepsilon$-subdifferential in \eqref{def:epsSubdiff}, and the fact that $|\inner{u}{\tilde u}| \le [s\|u\|^2+s^{-1}\|\tilde u\|^2]/2$
			for every $u, \tilde u \in \Re^n$  and $s>0$, we conclude that for every $z\in \Re^n$,
			\begin{align*}
			\tilde\phi(z)  + \frac{\|z-z_0\|^2}2- \tilde \phi(z_1) &\ge \frac{\|z_1-z_0\|^2}2 + \inner{v_1}{z-z_1} - \varepsilon_1 \\
			&= \frac{\|z_1-z_0\|^2}2 + \inner{v_1}{z_0-z_1} + \inner{v_1}{z-z_0} - \varepsilon_1 \\
			&\ge \frac{\|z_1-z_0\|^2}2 + \inner{v_1}{z_0-z_1} - \varepsilon_1 - \frac{\|v_1\|^2}{2s} - \frac{s \|z-z_0\|^2}{2}\\
			&\geq \frac{\|v_1+z_0-z_1\|^2}2 - \frac12 ( 1 + s^{-1}) \left[  \|v_1\|^2+2 \varepsilon_1  \right] - \frac{s \|z-z_0\|^2}{2},
			\end{align*}
			which immediately implies the conclusion of the lemma in view of
			the inequality in \eqref{Auxeq:prox_incl}.		\end{proof}

		\section{Proof of the first statement of Lemma~\ref{lem:auxdecL2}}\label{sec:proof of Ctheta is positive}
		
		In this section we prove the  first statement of Lemma~\ref{lem:auxdecL2}, i.e.,   
		$C_\theta \geq 1/8$, where $C_\theta$ is as in \eqref{def:BCtheta}.
		
		\begin{proof} First note that the definition of $\tau_\theta$ in \eqref{def:tau} immediately yields
		$\tau_\theta(16-17\theta)/\theta\leq 1,$
		which can be easily seen to be equivalent to 
	    \begin{equation}\label{auxeq9900}
        \frac{2(1-\theta)}{\theta}\left[ \frac{2\tau_\theta}{\tau_\theta+1}\right]\leq \frac{1}{4}.
	    \end{equation}
		Now, in view of \eqref{def:BCtheta},   $C_\theta\geq 1/8$ if and only if $\sigma$ as in \eqref{definition of sigma} satisfies
		$$
		\frac{\sigma^2}{2}+\frac{2(1-\theta)}{\theta}
		\left[ \frac{2\tau_\theta(1+\sigma)^2}{\tau_\theta+1} + 2\sigma(1+\sigma) +\frac{(\tau_\theta+1)\sigma^2}{\tau_\theta}\right]\leq \frac{3}{8}.
		$$
		Hence,  in view of \eqref{auxeq9900}, in order  to show that $C_\theta\geq 1/8$, it is sufficient to prove that
		$$
		\frac{\sigma^2}{2}+ \frac{(1+\sigma)^2}{4} + \frac{2(1-\theta)}{\theta}
		\left[2\sigma(1+\sigma) +\frac{(\tau_\theta+1)\sigma^2}{\tau_\theta}\right]\leq \frac{3}{8},
		$$
		or equivalently
		$$
		\left(\frac{3}{4} + \frac{2(1-\theta)\left(3\tau_\theta+1\right)}{\theta\tau_\theta}\right) \sigma^2 + \left(\frac{8-7\theta}{2\theta}\right)\sigma - \frac{1}{8}\leq 0.
		$$
		Since $\theta \in (0,1]$ and $0<\sigma \leq \sigma_\theta$ in view of  \eqref{definition of sigma}, the above inequality holds immediately from the fact that $\sigma_\theta$ is  the only positive solution of equation  \eqref{def:sigmatheta}  associated to the above quadratic inequality.
		Hence, from the above conclusions, we obtain  $C_\theta \geq 1/8$.\end{proof}

		\bibliographystyle{plain}
		\bibliography{IPAAL-SIAM-04-06-20}

\def\cprime{$'$}
\begin{thebibliography}{10}

\bibitem{Attouch2016}
H.~Attouch and J.~Peypouquet.
\newblock The rate of convergence of {N}esterov{\textquotesingle}s accelerated
  forward-backward method is actually faster than $1/k^{2}$.
\newblock {\em SIAM J. Optim.}, 26(3):1824--1834, 2016.

\bibitem{Aybatpenalty}
N.S. Aybat and G.~Iyengar.
\newblock A first-order smoothed penalty method for compressed sensing.
\newblock {\em SIAM J. Optim.}, 21(1):287--313, 2011.

\bibitem{AybatAugLag}
N.S. Aybat and G.~Iyengar.
\newblock A first-order augmented {Lagrangian} method for compressed sensing.
\newblock {\em SIAM J. Optim.}, 22(2):429--459, 2012.

\bibitem{beck2009fast}
A.~Beck and M.~Teboulle.
\newblock A fast iterative shrinkage-thresholding algorithm for linear inverse
  problems.
\newblock {\em SIAM Journal on Imaging Sciences}, 2(1):183--202, 2009.

\bibitem{Ber1}
D.~P. Bertsekas.
\newblock {\em Constrained optimization and Lagrange multiplier methods}.
\newblock Academic Press, New York, 1982.

\bibitem{Lan-ConstrainedStocasticProxMetNonconvex2019}
D.~Boob, Q.~Deng, and G.~Lan.
\newblock Stochastic first-order methods for convex and nonconvex functional
  constrained optimization.
\newblock {\em Available on arxiv:1908.02734}, 2019.

\bibitem{Aaronetal2017}
Yair Carmon, John~C. Duchi, Oliver Hinder, and Aaron Sidford.
\newblock Accelerated methods for nonconvex optimization.
\newblock {\em SIAM Journal on Optimization}, 28(2):1751--1772, 2018.

\bibitem{CartToint}
C.~Cartis, N.~Gould, and P.~Toint.
\newblock On the complexity of steepest descent, {Newton}'s and regularized
  {Newton}'s methods for nonconvex unconstrained optimization problems.
\newblock {\em SIAM J. Optim.}, 20(6):2833--2852, 2010.

\bibitem{GlanPDaccel2014}
Y.~Chen, G.~Lan, and Y.~Ouyang.
\newblock Optimal primal-dual methods for a class of saddle point problems.
\newblock {\em SIAM J. Optim.}, 24(4):1779--1814, 2014.

\bibitem{Paquette2017}
D.~Drusvyatskiy and C.~Paquette.
\newblock Efficiency of minimizing compositions of convex functions and smooth
  maps.
\newblock {\em Math. Programming}, 178:503--558, 2019.

\bibitem{nonconv_lan16}
S.~Ghadimi and G.~Lan.
\newblock Accelerated gradient methods for nonconvex nonlinear and stochastic
  programming.
\newblock {\em Math. Programming}, 156:59--99, 2016.

\bibitem{Ghadimi2019}
S.~Ghadimi, G.~Lan, and H.~Zhang.
\newblock Generalized uniformly optimal methods for nonlinear programming.
\newblock {\em Journal of Scientific Computing}, 79(3):1854--1881, Jun 2019.

\bibitem{MaxJeffRen-admm}
M.L.N. Gonçalves, J.G. Melo, and R.D.C. Monteiro.
\newblock Convergence rate bounds for a proximal admm with over-relaxation
  stepsize parameter for solving nonconvex linearly constrained problems.
\newblock {\em Pac. J. Optim.}, 15(3):379--398, 2019.

\bibitem{HongPertAugLag}
D.~Hajinezhad1 and M.~Hong.
\newblock Perturbed proximal primal–dual algorithm for nonconvex nonsmooth
  optimization.
\newblock {\em Math. Programming}, 176:207--245, 2019.

\bibitem{YheMoneiroNash}
Y.~He and R.~D.~C. Monteiro.
\newblock Accelerating block-decomposition first-order methods for solving
  composite saddle-point and two-player {N}ash equilibrium problems.
\newblock {\em SIAM J. Optim.}, 25(4):2182--2211, 2015.

\bibitem{YHe2}
Y.~He and R.~D.~C. Monteiro.
\newblock An accelerated {HPE}-type algorithm for a class of composite
  convex-concave saddle-point problems.
\newblock {\em SIAM J. Optim.}, 26(1):29--56, 2016.

\bibitem{ProxAugLag_Ming}
M.~Hong.
\newblock Decomposing linearly constrained nonconvex problems by a proximal
  primal dual approach: algorithms, convergence, and applications.
\newblock {\em available on arXiv:1604.00543}, 2016.

\bibitem{SZhang-Pen-admm}
B.~Jiang, T.~Lin, S.~Ma, and S.~Zhang.
\newblock Structured nonconvex and nonsmooth optimization algorithms and
  iteration complexity analysis.
\newblock {\em Comput. Optim. Appl.}, 72(3):115–157, 2019.

\bibitem{OliverMonteiro}
O.~Kolossoski and R.~D.~C. Monteiro.
\newblock An accelerated non-euclidean hybrid proximal extragradient-type
  algorithm for convex-concave saddle-point problems.
\newblock {\em Optim. Methods Softw.}, 32(6):1244--1272, 2017.

\bibitem{WJRproxmet1}
W.~Kong, J.G. Melo, and R.D.C. Monteiro.
\newblock Complexity of a quadratic penalty accelerated inexact proximal point
  method for solving linearly constrained nonconvex composite programs.
\newblock {\em SIAM Journal on Optimization}, 29(4):2566--2593, 2019.

\bibitem{MinMax-RenWilliam}
W~Kong and R.D.C. Monteiro.
\newblock An accelerated inexact proximal point method for solving
  nonconvex-concave min-max problems.
\newblock {\em Available on arXiv:1905.13433v2}, 2019.

\bibitem{WJRComputQPAIPP}
Weiwei Kong, Jefferson~G. Melo, and Renato D.~C. Monteiro.
\newblock An efficient adaptive accelerated inexact proximal point method for
  solving linearly constrained nonconvex composite problems.
\newblock {\em Comput. Optim. Appl.}, 76(2):305--346, 2019.

\bibitem{LanRen2013PenMet}
G.~Lan and R.~D.~C. Monteiro.
\newblock Iteration-complexity of first-order penalty methods for convex
  programming.
\newblock {\em Math. Programming}, 138(1):115--139, Apr 2013.

\bibitem{LanMonteiroAugLag}
G.~Lan and R.~D.~C. Monteiro.
\newblock Iteration-complexity of first-order augmented {L}agrangian methods
  for convex programming.
\newblock {\em Math. Programming}, 155(1):511--547, Jan 2016.

\bibitem{Li_Lin2015}
H.~Li and Z.~Lin.
\newblock Accelerated proximal gradient methods for nonconvex programming.
\newblock {\em Adv. Neural Inf. Process. Syst.}, 28:379--387, 2015.

\bibitem{HybridPenaltyAugLag19}
Z.~Li and Y.~Xu.
\newblock First-order inexact augmented {L}agrangian methods for convex and
  nonconvex programs: nonergodic convergence and iteration complexity.
\newblock {\em Personal technical report}, 2019.

\bibitem{PPmetNonconvex2019}
Q.~Lin, R.~Ma, and Y.~Xu.
\newblock Inexact proximal-point penalty methods for non-convex optimization
  with non-convex constraints.
\newblock {\em Available on https://arxiv.org/abs/1908.11518}, 2019.

\bibitem{ShiqiaMaAugLag16}
Y.F. Liu, X.~Liu, and S.~Ma.
\newblock On the nonergodic convergence rate of an inexact augmented
  {L}agrangian framework for composite convex programming.
\newblock {\em Math. Oper. Res.}, 44(2):632--650, 2019.

\bibitem{zhaosongAugLag18}
Z.~{Lu} and Z.~{Zhou}.
\newblock {Iteration-complexity of first-order augmented Lagrangian methods for
  convex conic programming}.
\newblock {\em Available on arXiv:1803.09941}, 2018.

\bibitem{MontSvaiter_fista}
R.~D.~C. Monteiro, Ortiz, and Benar~F. Svaiter.
\newblock An adaptive accelerated first-order method for convex optimization.
\newblock {\em Comput. Optim. Appl.}, 64:31--73, 2016.

\bibitem{MonteiroSvaiterAcceleration}
R.~D.~C. Monteiro and B.~F. Svaiter.
\newblock An accelerated hybrid proximal extragradient method for convex
  optimization and its implications to second-order methods.
\newblock {\em SIAM J. Optim.}, 23(2):1092--1125, 2013.

\bibitem{MonteiroSvaiterNewton}
R.D.C. Monteiro and B.F. Svaiter.
\newblock Iteration-complexity of a {N}ewton proximal extragradient method for
  monotone variational inequalities and inclusion problems.
\newblock {\em SIAM Journal on Optimization}, 22(3):914--935, 2012.

\bibitem{IterComplConicprog}
I.~Necoara, A.~Patrascu, and F.~Glineur.
\newblock Complexity of first-order inexact {L}agrangian and penalty methods
  for conic convex programming.
\newblock {\em Optim. Methods Softw.}, pages 1--31, 2017.

\bibitem{nesterov2012gradient}
Y.~Nesterov.
\newblock Gradient methods for minimizing composite functions.
\newblock {\em Math. Programming}, pages 1--37, 2012.

\bibitem{nesterov1983method}
Y.~E. Nesterov.
\newblock {\em Introductory lectures on convex optimization : a basic course}.
\newblock Kluwer Academic Publ., 2004.

\bibitem{NesterovSec_ord}
Y.E. Nesterov and B.T. Polyak.
\newblock Cubic regularization of newton method and its global performance.
\newblock {\em Math. Program.}, 108(1):177--205, 2006.

\bibitem{CatalystNC}
C.~Paquette, H.~Lin, D.~Drusvyatskiy, J.~Mairal, and Z.~Harchaoui.
\newblock Catalyst for gradient-based nonconvex optimization.
\newblock In {\em AISTATS 2018-21st International Conference on Artificial
  Intelligence and Statistics}, pages 1--10, 2018.

\bibitem{Patrascu2017}
A.~Patrascu, I.~Necoara, and Q.~Tran-Dinh.
\newblock Adaptive inexact fast augmented {L}agrangian methods for constrained
  convex optimization.
\newblock {\em Optim. Lett.}, 11(3):609--626, 2017.

\bibitem{MR0418919}
R.~T. Rockafellar.
\newblock Augmented {L}agrangians and applications of the proximal point
  algorithm in convex programming.
\newblock {\em Math. Oper. Res.}, 1(2):97--116, 1976.

\bibitem{tseng2008accmet}
P.~Tseng.
\newblock On accelerated proximal gradient methods for convex-concave
  optimization.
\newblock {\em
  \tt{http://www.mit.edu/$_{\widetilde{~}}$dimitrib/PTseng/papers.html}}, 2008.

\bibitem{YangyangAugLag17}
Yangyang Xu.
\newblock Iteration complexity of inexact augmented lagrangian methods for
  constrained convex programming.
\newblock {\em Math. Programming}, 2019.

\end{thebibliography}
		
	\end{document}